\documentclass[12pt,a4paper]{amsart}


\usepackage[latin1]{inputenc}
\usepackage{graphicx}
\usepackage{amssymb, amsmath}
\usepackage{color}
\usepackage{geometry}
\usepackage[shortlabels]{enumitem}
\usepackage{amsthm}


\setlength{\parindent}{0pt}
\setlength{\parskip}{5pt}

\usepackage[dvipsnames]{xcolor}
\usepackage[colorlinks=true,linkcolor=blue,urlcolor=blue,citecolor=blue]{
hyperref}
\geometry{a4paper,twoside,top=3cm,bottom=3cm,left=2.8cm,right=2.8cm,headsep=1cm,
headheight=3mm}

\usepackage{mathpazo}
\usepackage{domitian}
\usepackage[T1]{fontenc}

\newtheorem{theorem}{Theorem}[section]

\newtheorem{lemma}[theorem]{Lemma}
\newtheorem{corollary}[theorem]{Corollary}
\newtheorem{proposition}[theorem]{Proposition}

\theoremstyle{definition}
\newtheorem{example}[theorem]{Example}
\newtheorem{remark}[theorem]{Remark}

\newcommand{\T}{\mathbb{T}}
\newcommand{\R}{\mathbb{R}}
\newcommand{\Q}{\mathbb{Q}}
\newcommand{\E}{\mathbb{E}}

\newcommand{\N}{\mathbb{N}}

\newcommand{\zz}{\mathbb{Z}}
\newcommand{\hh}{\mathcal{H}}
\newcommand{\C}{\mathbb{C}}

\DeclareMathOperator{\re}{Re}
\DeclareMathOperator{\supp}{supp}

\DeclareFontFamily{U}{wncy}{}
\DeclareFontShape{U}{wncy}{m}{n}{<->wncyr10}{}
\DeclareSymbolFont{mcy}{U}{wncy}{m}{n}
\DeclareMathSymbol{\Sh}{\mathord}{mcy}{"58}
\usepackage{amsmath,amsfonts,amssymb,amsthm}
\usepackage{mathtools}


\author[Carando]{Daniel Carando}
\address{Departamento de Matem{\'a}tica\\ Facultad de Cs. Exactas y Naturales\\
		Universidad de Buenos Aires and IMAS-UBA-CONICET\\ Int.~G{\"u}iraldes s/n, 1428\\ Buenos Aires, Argentina.}
\email{dcarando@dm.uba.ar}

\author[Defant]{Andreas Defant}
\address{Institut f\"ur Mathematik\\
Carl von Ossietzky Universit\"at\\
26111 Oldenburg, Germany.}
\email{defant@mathematik.uni-oldenburg.de}

\author[Marceca]{Felipe Marceca}
\address{Department of Mathematics\\ King's College London\\ United Kingdom}
\email{felipe.marceca@kcl.ac.uk}

\author[Schoolmann]{Ingo Schoolmann}
\address{Institut f\"ur Mathematik\\
Carl von Ossietzky Universit\"at\\
26111 Oldenburg, Germany.}
\email{ingo.schoolmann@uni-oldenburg.de}

\author[Sevilla-Peris]{Pablo Sevilla-Peris}
\address{Instituto Universitario de Matem\'atica Pura y Aplicada\\
Universitat Polit\`{e}cnica de Val\`encia\\ cmno Vera s/n, 46022\\
Val\`encia, Spain}
\email{psevilla@mat.upv.es}

\title[Hypercontractivity and strips for general Dirichlet series]{Hypercontractivity and  strips of convergence in Hardy spaces of general Dirichlet series}

\thanks{The first author was partially supported by CONICET PIP  11220200102366CO and
ANPCyT PICT 2018-04104.
The third author gratefully acknowledges support from EPSRC: NIA EP/V002449/1 and the Austrian Science Fund (FWF): Y 1199.
supported by grant PID2021-122126NB-C33 funded by MCIN/AEI/10.13039/501100011033 and by `ERDF A way of making Europe', and by GV Project AICO/2021/170}

\subjclass[2020]{43A17, 30B50, 30H10}

\keywords{general Dirichlet series, Hardy spaces, hypercontractivity, strips of convergence, additive structure}

\begin{document}

\maketitle

\begin{abstract}
    For a general Dirichlet series $\sum a_n e^{-\lambda_n s}$ with frequency $\lambda=(\lambda_n)_n$, we study  how horizontal translation (i.e. convolution with a Poisson kernel) improves its integrability properties. We characterize hypercontractive frequencies in terms of their additive structure answering some questions posed by Bayart. We also provide sharp bounds for the strips $S_p(\lambda)$ that encode the minimum translation necessary for series in the Hardy space $\mathcal H_p(\lambda)$ to have absolutely convergent coefficients.
\end{abstract}


\section{Introduction and main results}

Given a frequency $\lambda = (\lambda_n)$, i.e. a strictly increasing sequence of non-negative real numbers tending to $\infty$,
a \emph{$\lambda$-Dirichlet series}  is a (formal) series of the form $D=\sum a_{n}e^{-\lambda_{n}s}$, where $s$ is a complex variable and the sequence  $(a_{n})$
(of so-called Dirichlet coefficients) belongs to $\C$.

A $\lambda$-Dirichlet series with finitely many non-zero coefficients will be called a \emph{$\lambda$-Dirichlet polynomial}. As in the ordinary case, for each $1\le p < \infty$ we can define the $p$-norm of a $\lambda$-Dirichlet polynomial $D$ with coefficients $a_1,\ldots,a_N$  as
\begin{equation}\label{pnorm}
  \|D\|_p= \lim_{T \to \infty} \Big(\frac{1}{2T}\int_{-T}^T \Big|\sum_{n=1}^N a_n e^{-\lambda_n it} \Big|^p dt\Big)^{1/p} \,.
\end{equation}
With this, the \emph{Hardy space $\mathcal{H}_p(\lambda)$} of $\lambda$-Dirichlet series is defined as the completion of the space of $\lambda$-Dirichlet polynomials with the above norm. There is another equivalent definition of these spaces involving Fourier analysis on groups (see \cite{Ba2} for the ordinary case $\lambda=(\log n)$ and  \cite[Definition~3.23 and Theorem~3.26]{DeSch} for the general case) that in some aspects is more suitable to deal with them.
We frequently use results that stem from this point of view, and refer  for a short  exposition of this construction to the preliminaries (in particular, for an appropriate definition of $\mathcal{H}_\infty(\lambda)$).

It is well known (and easy to check) that these spaces are decreasing, in the sense that if $1 \leq p < q < \infty$, then $\mathcal{H}_{q}(\lambda) \subseteq \mathcal{H}_{p}(\lambda)$. On the other hand, horizontal translations can take a Dirichlet series to a smaller Hardy space.
Given a $\lambda$-Dirichlet series $D= \sum a_{n} e^{-\lambda_{n} s}$ and $\sigma \in \mathbb{R}$, we define a new $\lambda$-Dirichlet series as
\begin{equation} \label{gallardo}
D_\sigma:=\sum a_n e^{-\lambda_n \sigma} e^{-\lambda_n s} \,,
\end{equation}
which we often will call the \textit{translation} of $D$ by $\sigma$.
By \cite[Theorem~4.7]{DeSch}, for each $\sigma >0$, the operator $\tau_{\sigma}$  given by $\tau_\sigma (D) = D_\sigma$ is well defined and a contraction when defined from $\mathcal{H}_{p} (\lambda)$ to $\mathcal{H}_{p} (\lambda)$ (and hence also to any $\mathcal{H}_{q}(\lambda)$ with $q \geq p$).
In this note we study the values $1 \leq p < q < \infty$ and $\sigma >0$ for which the operator $\tau_{\sigma} : \mathcal{H}_{p} (\lambda) \to \mathcal{H}_{q}(\lambda)$ is well defined and continuous. We have three main goals.

Our first goal is to describe \textit{hypercontractive} frequencies, that is, frequencies $\lambda$ such that the operator 
$\tau_{\sigma} : \mathcal{H}_{p} (\lambda) \to \mathcal{H}_{q}(\lambda)$ is well defined and continuous for every pair $1 \leq p < q < \infty$ and all $\sigma >0$. The term stems from regarding $\tau_{\sigma}$ as the Poisson semigroup in certain subspaces of $L^p(G)$ where $G$ is an ordered group (see \cite{DeSch,Ru} and also Section~\ref{secprel}).

In \cite[Proposition~5.4]{Ba}, Bayart showed that for a frequency  $\lambda$ such that
\begin{equation*}
      \lim_{\mu \to \infty} \frac{\log(\#\{ (n_1,\ldots,n_k) \colon   \lambda_{n_1}+\ldots+\lambda_{n_k} = \mu  \,       \} )}{\mu}=0,
\end{equation*}
the operator $\tau_{\sigma}$ maps $\hh_2(\lambda)$ into $\hh_{2k}(\lambda)$ for every $k\in \N$ and every $\sigma>0$. This points to a relation between hypercontractivity and a lack of additive structure of the frequency.

Delving deeper into this phenomenon we can provide a characterization of hypercontractivity under the mild growth condition $\limsup \tfrac{\log\log n}{\lambda_n}=0$. Recall that the \emph{$k$-th additive energy} of a set $A \subseteq \mathbb{R}$ is defined as 
\[
E_k(A)=\#\{(x,y)\in A^{2k} \colon \ x_1+\ldots+x_k=y_1+\ldots+y_k\} \,.
\]
See \eqref{corona} for a simple reformulation in terms of norms of Dirichlet polynomials.

\begin{theorem}\label{corchar}
	Let $\lambda$ be a frequency. The following are equivalent:
	\begin{enumerate}[label=(\roman*)]
		\item\label{iti} $\lambda$ is hypercontractive;
		\item\label{itii}  $\tau_\sigma$ maps $\hh_2(\lambda)$ into $\hh_{2k}(\lambda)$ for every $\sigma>0$ and every $k\in \N$.
	\end{enumerate}		
If additionally $\lambda$ satisfies that $\limsup \tfrac{\log\log n}{\lambda_n}=0$, both are also equivalent to:
\begin{enumerate}[label=(iii)]
		\item\label{itiii} For every $k\ge 2$ we have
		\[
		\lim_{j\to\infty} \sup_{A\subseteq \lambda\cap [j,j+1)}\frac{1}{j}\log \Big(\frac{E_k(A)^{1/k}}{\#A}\Big)=0\,.
		\]
	\end{enumerate}
\end{theorem}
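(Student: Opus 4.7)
The implication \ref{iti}$\Rightarrow$\ref{itii} is immediate. For the converse \ref{itii}$\Rightarrow$\ref{iti}, my plan is to upgrade the assumed $\hh_2\to\hh_{2k}$ bounds to arbitrary pairs $p<q$ by combining Riesz--Thorin interpolation against the $\hh_p$-contraction of $\tau_\sigma$ recalled after \eqref{gallardo}, the semigroup identity $\tau_{\sigma_1+\sigma_2}=\tau_{\sigma_1}\circ\tau_{\sigma_2}$, and Hardy-space duality: since the Poisson kernel is self-adjoint, dualizing \ref{itii} yields $\tau_\sigma:\hh_{2k/(2k-1)}(\lambda)\to\hh_2(\lambda)$ for every $k$. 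The case $p\ge 2$ reduces to \ref{itii} via $\hh_p\hookrightarrow\hh_2$; duality together with interpolation and iteration through the semigroup covers every $p\in(1,\infty)$ with $q>p$; and the endpoint $p=1$ is reached by interpolating the $\hh_1$-contraction with a bound for some $p_0>1$ already obtained. The bulk of the work is therefore \ref{itii}$\Leftrightarrow$\ref{itiii}, where the growth condition enters.

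For \ref{itii}$\Rightarrow$\ref{itiii} the natural test objects are the unit-coefficient polynomials $D_A=\sum_{n\in A}e^{-\lambda_n s}$ with $A\subseteq\lambda\cap[j,j+1)$. One has $\|D_A\|_2^2=\#A$, and since every frequency of $(\tau_\sigma D_A)^k$ lies in $[kj,k(j+1))$, the reformulation \eqref{corona} gives
\begin{equation*}
\|\tau_\sigma D_A\|_{2k}^{2k}=\sum_\mu e^{-2\sigma\mu}\bigl(\#\{\vec n\in A^k:\lambda_{n_1}+\dots+\lambda_{n_k}=\mu\}\bigr)^2\ge e^{-2\sigma k(j+1)}E_k(A).
\end{equation*}
Combining with $\|\tau_\sigma D_A\|_{2k}\le C_\sigma\|D_A\|_2$ yields $E_k(A)^{1/k}/\#A\le C_\sigma^{2}e^{2\sigma(j+1)}$, so $\tfrac{1}{j}\log\bigl(E_k(A)^{1/k}/\#A\bigr)\le 2\sigma+o(1)$ uniformly in $A$; taking $\sup_A$ and $\limsup_j$ and then letting $\sigma\to 0^+$ gives \ref{itiii}.

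The implication \ref{itiii}$\Rightarrow$\ref{itii} is the main obstacle. I would decompose any $f\in\hh_2(\lambda)$ as $f=\sum_j f_j$ with $f_j$ supported on $A_j=\lambda\cap[j,j+1)$, so $\|f\|_2^2=\sum_j\|f_j\|_2^2$ by orthogonality, and aim for a per-block estimate $\|f_j\|_{2k}\le e^{o(j)}\|f_j\|_2$. Combined with $\|\tau_\sigma f_j\|_{2k}\le e^{-\sigma j}\|f_j\|_{2k}$ (a direct consequence of $\mu\ge kj$ on the support of $(\tau_\sigma f_j)^k$) and Cauchy--Schwarz, this gives
\begin{equation*}
\|\tau_\sigma f\|_{2k}\le\sum_j\|\tau_\sigma f_j\|_{2k}\le\Bigl(\sum_j e^{-2\sigma j+o(j)}\Bigr)^{\!1/2}\|f\|_2<\infty.
\end{equation*}
For the per-block estimate I would dyadically decompose the coefficients of $f_j$ by magnitude, $|a_n|\in(\|f_j\|_2 2^{-m-1},\|f_j\|_2 2^{-m}]$ on the support of $f_m$. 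Since on each level the coefficients are essentially constant in modulus, the identity $\|D_{A_m}\|_{2k}^{2k}=E_k(A_m)$ and \ref{itiii} applied to the subset $A_m\subseteq A_j$ give $\|f_m\|_{2k}\le 2(E_k(A_m)^{1/k}/\#A_m)^{1/2}\|f_m\|_2\le 2e^{o(j)}\|f_m\|_2$. Minkowski followed by Cauchy--Schwarz over the $O(\log\#A_j)$ relevant levels introduces an additional factor $\sqrt{\log\#A_j+1}$, while the tail where $|a_n|<\|f_j\|_2/\#A_j$ is absorbed by the crude bound $\|h\|_{2k}\le\|h\|_\infty\le\#A_j\cdot\|f_j\|_2/\#A_j=\|f_j\|_2$. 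The growth hypothesis $\limsup\log\log n/\lambda_n=0$ enters precisely here: it forces $\log\log\#A_j=o(j)$, so $\sqrt{\log\#A_j+1}=e^{o(j)}$, exactly what is needed to absorb into the overall $e^{o(j)}$ factor. Matching the strength of this growth condition to the cost of reducing arbitrary coefficients to the indicator case is the delicate point.
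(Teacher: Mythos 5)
Your treatment of \ref{iti}$\Rightarrow$\ref{itii} and of the equivalence \ref{itii}$\Leftrightarrow$\ref{itiii} is correct and is essentially the paper's own argument: the test polynomials $\sum_{n\in A}e^{-\lambda_n s}$ together with \eqref{corona} give \ref{itii}$\Rightarrow$\ref{itiii}, and for the converse your block decomposition along $\lambda\cap[j,j+1)$ (where $p=2$ makes the blocks orthogonal, so none of the Schauder-basis or Helson machinery of the general localization Lemma~\ref{theochar} is needed), the dyadic decomposition of the coefficients by magnitude, the loss of a factor $\sqrt{\log\#A_j+1}$, and its absorption via $\log\log\#A_j=o(j)$ reproduce exactly Lemmas~\ref{lemenI} and~\ref{lemen}. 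The duality step in \ref{itii}$\Rightarrow$\ref{iti} is also sound: for polynomials $\langle \tau_\sigma f,g\rangle=\langle f,\tau_\sigma g\rangle$, and testing against $\hh_2(\lambda)$ gives $\tau_\sigma:\hh_{2k/(2k-1)}(\lambda)\to\hh_2(\lambda)$. In fact for $1<p<q<\infty$ you do not need Riesz--Thorin at all (which is fortunate, since complex interpolation of the subspaces $\hh_p(\lambda)$ of $L_p(G)$ is not justified for general $\lambda$: the usual Riesz--Thorin construction $|f|^{\alpha(z)}f/|f|$ leaves the spectral subspace): choose $k$ with $\tfrac{2k}{2k-1}\le p$ and $2m\ge q$, and compose $\hh_p\subseteq\hh_{2k/(2k-1)}\xrightarrow{\tau_{\sigma/2}}\hh_2\xrightarrow{\tau_{\sigma/2}}\hh_{2m}\subseteq\hh_q$ using the semigroup property.

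The genuine gap is the endpoint $p=1$, which the definition of hypercontractivity includes. Interpolating the $\hh_1$-contraction with an already-established bound $\tau_\sigma:\hh_{p_0}\to\hh_{q_0}$, $p_0>1$, can only produce domain exponents $p_\theta\in(1,p_0]$ (the $\theta=0$ endpoint returns the trivial $\hh_1\to\hh_1$ bound), so it never yields $\tau_\sigma:\hh_1(\lambda)\to\hh_q(\lambda)$ with $q>1$; and there is no soft repair, since factoring $\tau_\sigma=\tau_{\sigma/2}\circ\tau_{\sigma/2}$ would require knowing that $\tau_{\sigma/2}$ already maps $\hh_1$ into some $\hh_p$ with $p>1$, which is precisely an instance of what is to be proved. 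The implication ``all $p>1$ cases imply the $p=1$ case'' is true, but it is the content of Corollary~\ref{corlil} with $p_1=1$, whose proof goes through the localization Lemma~\ref{theochar} at $p=1$; there the reassembly of the blocks requires Helson's partial-sum inequality (Lemma~\ref{parsum}) and the $\Lambda$-constant comparison of Lemma~\ref{lelil} with exponents below $1$, i.e.\ a genuinely new ingredient that your outline does not supply. Replacing your endpoint interpolation step by this local argument (bound $\Lambda_{2k,2}(\lambda\cap[j,j+1))$ subexponentially as you already do, convert to $\Lambda_{q,1-\varepsilon}$ via Lemma~\ref{lelil}, and reassemble with Lemma~\ref{parsum}) would close the proof.
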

Notice that  \ref{itii} implies that a finite union of hypercontractive frequencies is hypercontractive. It is also worth pointing out that the equivalence between \ref{itii} and \ref{itiii} holds also for each fixed $k$, as shown in Lemma~\ref{lemen}. Finally, we remark  that although \ref{itiii} may seem quite involved, it only appeals to the additive structure of~$\lambda$.

From \cite[Proposition~5.4]{Ba} and the implication \ref{itii} $\Rightarrow$ \ref{iti} we immediately get the following corollary.

\begin{corollary}\label{burrito}
  Let $\lambda$ be a frequency such that
  \begin{equation*}
      \lim_{\mu \to \infty} \frac{\log(\#\{ (n_1,\ldots,n_k) \colon   \lambda_{n_1}+\ldots+\lambda_{n_k} = \mu  \,       \} )}{\mu}=0,
  \end{equation*}
  for all $k\in \N$. Then, $\lambda$ is hypercontractive.
\end{corollary}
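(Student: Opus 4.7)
The plan is to argue that this corollary is essentially immediate by chaining together Bayart's hypothesis-conclusion statement with the nontrivial implication already packaged in Theorem~\ref{corchar}. First, I would verify that the hypothesis on representation counts is exactly the assumption of \cite[Proposition~5.4]{Ba}, which guarantees that under this additive-structure condition, the translation operator $\tau_\sigma \colon \mathcal{H}_2(\lambda) \to \mathcal{H}_{2k}(\lambda)$ is bounded for every $\sigma > 0$ and every $k \in \mathbb{N}$. That is precisely condition \ref{itii} of Theorem~\ref{corchar}.

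The second step is to invoke the implication \ref{itii} $\Rightarrow$ \ref{iti} of Theorem~\ref{corchar}, which yields that $\lambda$ is hypercontractive. Note that the implication \ref{itii} $\Rightarrow$ \ref{iti} holds with no further growth assumption on $\lambda$ (the $\limsup \tfrac{\log \log n}{\lambda_n} = 0$ hypothesis in Theorem~\ref{corchar} is only needed to bring \ref{itiii} into the equivalence), so the corollary applies to any frequency satisfying Bayart's representation-growth condition.

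There is no real obstacle here: the entire content of the corollary is the observation that Bayart's representation-counting hypothesis gives us exactly the jump from $\mathcal{H}_2(\lambda)$ to all $\mathcal{H}_{2k}(\lambda)$, and Theorem~\ref{corchar} then bootstraps this to full hypercontractivity (all $p < q$ and all $\sigma > 0$) through the (ii) $\Rightarrow$ (i) direction. The proof is therefore a one-line citation of \cite[Proposition~5.4]{Ba} followed by one application of Theorem~\ref{corchar}.
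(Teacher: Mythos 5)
Your proposal is correct and is exactly the paper's argument: the corollary is obtained by combining \cite[Proposition~5.4]{Ba} (which yields condition \ref{itii}) with the implication \ref{itii} $\Rightarrow$ \ref{iti} of Theorem~\ref{corchar}, and you rightly observe that the growth hypothesis $\limsup \tfrac{\log\log n}{\lambda_n}=0$ is only needed for the equivalence with \ref{itiii}, not here.
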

Dealing with this sufficient condition will be more practical to prove hypercontractivity for specific frequencies (see Section~\ref{secexa}).

Our second goal is, for a non-hypercontractive frequency $\lambda$, to study the region where the optimal $\sigma$ guaranteeing that $\tau_\sigma$ maps $\hh_p(\lambda)$ into $\hh_q(\lambda)$ lives.
We denote the abscissas of convergence and absolute convergence of a $\lambda$-Dirichlet series $D$ as
\begin{align*}
\sigma_{c}(D)&:=\inf\left \{ \sigma \in \R : \ D \text{ converges on } [\re s>\sigma] \right\},
\\ \sigma_{a}(D)&:=\inf\left \{ \sigma \in \R : \ D \text{ converges absolutely on } [\re s>\sigma] \right\}.
\end{align*}
The maximal width of the strip of convergence and non absolute convergence is denoted by $L(\lambda)$ (see \cite[Section~3]{Bohr1914}):
\[
L(\lambda):=\sup(\sigma_{a}(D) - \sigma_{c}(D))=\sigma_c\Big(\sum_{n\in\N} e^{-\lambda_n s}\Big)=\limsup_{n\to\infty} \frac{\log n}{\lambda_n}\,.
\]
In \cite[Corollary~5.5]{Ba}, it is proven that if $\sigma > \tfrac{k-1}{2}L(\lambda)$, then $\tau_\sigma$ maps $\hh_{2}(\lambda)$ into $\hh_{2k}(\lambda)$. The following result shows that the key magnitude is in fact $\tfrac{k-1}{2k}L(\lambda)$ and extends this to other values of $p$ and $q$, also answering \cite[Question~5.7]{Ba}.

\begin{proposition}\label{cormax}
	For every $1\le p \le 2$, $p\le q\le\infty$ and every frequency $\lambda$, if  $\sigma> L(\lambda)(\tfrac{1}{p}-\tfrac{1}{q})$, then  $\tau_\sigma$ maps $\hh_p(\lambda)$ into $\hh_q(\lambda)$.
\end{proposition}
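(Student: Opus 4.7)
My plan is to combine a direct endpoint estimate at $q=\infty$ with an analytic interpolation in the translation parameter to cover all intermediate $q$.

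\textbf{Endpoint $q=\infty$.} For $D=\sum a_n e^{-\lambda_n s}\in \mathcal{H}_p(\lambda)$ with $1\le p\le 2$, the Hausdorff--Young inequality on the underlying $\lambda$-Dirichlet group yields $\|(a_n)\|_{p'}\le \|D\|_p$. A single application of H\"older's inequality then gives
\[
\|D_\sigma\|_\infty \;\le\; \sum_n |a_n|\,e^{-\lambda_n \sigma} \;\le\; \|D\|_p\Big(\sum_n e^{-p\sigma\lambda_n}\Big)^{1/p},
\]
and by the definition of $L(\lambda)$ the right-hand series converges whenever $p\sigma>L(\lambda)$. Thus $\tau_\sigma \colon \mathcal{H}_p(\lambda)\to \mathcal{H}_\infty(\lambda)$ is bounded as soon as $\sigma>L(\lambda)/p$, which is the claim at $q=\infty$.

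\textbf{Intermediate $p<q<\infty$ by analytic interpolation.} The case $p=q$ is immediate from the contractivity of $\tau_\sigma$ on $\mathcal{H}_p(\lambda)$, so assume $p<q$. Given $\sigma>L(\lambda)(1/p-1/q)$, set $\theta=1-p/q\in(0,1)$ and $b=\sigma/\theta$; a direct calculation gives $b>L(\lambda)/p$, so the endpoint step applies with parameter $b$. On $\lambda$-Dirichlet polynomials, introduce the analytic family
\[
T_z D \;=\; \tau_{bz}(D) \;=\; \sum_n a_n e^{-bz\lambda_n}e^{-\lambda_n s},\qquad 0\le \re z\le 1.
\]
On $\re z=0$, $T_{iy}$ is the vertical shift $D(\cdot+iby)$, an isometry of $\mathcal{H}_p(\lambda)$; on $\re z=1$, $T_{1+iy}$ is a vertical shift composed with $\tau_b$, which by the endpoint step is bounded from $\mathcal{H}_p(\lambda)$ into $\mathcal{H}_\infty(\lambda)$ uniformly in $y$. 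Stein's analytic interpolation theorem, combined with the complex interpolation identity $[\mathcal{H}_p(\lambda),\mathcal{H}_\infty(\lambda)]_\theta=\mathcal{H}_q(\lambda)$ for $1/q=(1-\theta)/p$, then yields $T_\theta=\tau_\sigma\colon \mathcal{H}_p(\lambda)\to \mathcal{H}_q(\lambda)$ as desired.

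\textbf{Main obstacle.} The conceptual engine is the arithmetic identity $\theta\,L(\lambda)/p=L(\lambda)(1/p-1/q)$, which converts the endpoint threshold into the sharper threshold for general $q$. The genuine technical point lies in justifying the admissibility of the analytic family $T_z$ and the complex interpolation identity for $\mathcal{H}_r(\lambda)$; both follow from the realisation of the Hardy spaces as Fourier-supported subspaces of $L^r(G)$ on the $\lambda$-Dirichlet group $G$ set up in the preliminaries, but must be invoked carefully.
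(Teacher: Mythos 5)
Your endpoint estimate ($q=\infty$) is correct and is essentially the same computation the paper uses (Hausdorff--Young \eqref{hy1} plus H\"older against $\sum e^{-p\sigma\lambda_n}$), and the numerology $\theta=1-p/q$, $b=\sigma/\theta>L(\lambda)/p$ is right. The gap is in the interpolation step: you invoke the identity $[\mathcal{H}_p(\lambda),\mathcal{H}_\infty(\lambda)]_\theta=\mathcal{H}_q(\lambda)$ and assert that it ``follows from the realisation of the Hardy spaces as Fourier-supported subspaces of $L^r(G)$''. It does not: complex interpolation does not in general commute with passing to spectrally defined subspaces, and with an $L^\infty$-type endpoint this is genuinely delicate even in the classical disc setting (there $[H^1,H^\infty]_\theta=H^q$ is a deep theorem of P.~Jones, not a formal consequence of $H^r\subseteq L^r$); no such identity is proved in the paper, nor is one available for general $\lambda$-Dirichlet groups. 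Stein's analytic interpolation theorem, as usually stated, concerns Lebesgue spaces, so ``combining'' it with the subspace couple $(\mathcal{H}_p,\mathcal{H}_\infty)$ is precisely where the unproved identity enters. As written, this step is a genuine gap.

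The strategy is nevertheless repairable without that identity, because neither endpoint moves the domain exponent and $T_z$ preserves the spectrum. Work on a $\lambda$-Dirichlet group $G$: for a fixed Dirichlet polynomial $D$ with associated trigonometric polynomial $f=\sum a_n\widehat f(h_{\lambda_n})h_{\lambda_n}$, the family $T_zf=\sum a_n e^{-bz\lambda_n}h_{\lambda_n}$ is entire and uniformly bounded on the strip; pair it with suitably deformed simple functions $g_z$ (dualising the targets $L^{p}(G)$ at $\re z=0$ and $L^{1}$-normalised $g_z$ at $\re z=1$, as in the standard proof of Stein's theorem) and apply the three-lines lemma to $z\mapsto\int_G (T_zf)\,g_z\,dm$. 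Since the domain exponent is $p$ on both lines, $f$ itself need not be deformed, so restricting to the subspace of polynomials with spectrum in $\{h_{\lambda_n}\}$ costs nothing, and the conclusion $\|\tau_\sigma D\|_q\le C\|D\|_p$ lands in $\mathcal{H}_q(\lambda)$ automatically because the spectrum is preserved; then extend by density. With this repair your argument becomes a valid alternative to the paper's proof, which proceeds quite differently: it localizes to the blocks $\lambda\cap[j,j+1)$ via Lemma~\ref{theochar}, applies the Nikolskii-type inequality of Lemma~\ref{N-type} on each block, and identifies the resulting exponent through Lemma~\ref{lella}. The localization route yields the flexible block-by-block criterion $T_{q,p}(\lambda)=\limsup_j \frac{1}{j}\log\Lambda_{q,p}(\lambda\cap[j,j+1))$ used throughout the paper, whereas your (repaired) interpolation route is shorter for this single proposition but produces only the global statement.
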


In particular, as already noticed in \cite[Remark~5.15]{DeFVScSP}, $L(\lambda)=0$ implies hypercontractivity.
The case $p>2$ seems more involved as is common in the context of Nikolskii-type inequalities (see \cite[Section~6]{DiTi}).

\begin{remark}
If $p=2$ we can give a a simple proof of Proposition~\ref{cormax}. By the Hausdorff-Young inequality (see \eqref{hy2} in Section~\ref{sec-proofs}),
for a Dirichlet series $D=\sum a_n e^{-\lambda_n s}$ and $2\le q \leq \infty$ we have,
\begin{align*}
		\|\tau_\sigma D\|_q 
&
\le \Big(\sum_{n\ge 1} |a_n|^{q'}e^{-\sigma q' \lambda_n}\Big)^{1/q'}
\\&
		\le \Big(\sum_{n\ge 1} e^{-\sigma 2q' \lambda_n/(2-q')}\Big)^{1/q'-1/2}\Big(\sum_{n\ge 1} |a_n|^{2}\Big)^{1/2}
		\\&
\le (\sum_{n\ge 1} e^{-\sigma 2q' \lambda_n/(2-q')}\Big)^{1/q'-1/2} \|D\|_2,
\end{align*}
where we used H\"older's inequality with parameters $2/(2-q')$ and $2/q'$. So, $\tau_\sigma$ is bounded whenever $\sigma 2q'/(2-q') > L(\lambda)$.
\end{remark}

The bound $L(\lambda)(\tfrac{1}{p}-\tfrac{1}{q})$ in Proposition~\ref{cormax} cannot be improved. Moreover, the following proposition~shows that the boundedness of $\tau_\sigma$ can be attained at any point between 0 and $L(\lambda)(\tfrac{1}{p}-\tfrac{1}{q})$.

\begin{proposition}\label{propshift}
For each $\alpha\in[0,1]$, there exists a frequency $\lambda$ satisfying the following two properties: 
\begin{enumerate}[(a)]
    \item\label{propshifta} If $1 \leq p \le q \leq \infty$ and $\tau_\sigma$ maps $\hh_p(\lambda)$ into $\hh_q(\lambda)$, then $\sigma\ge \alpha (\tfrac{1}{p}-\tfrac{1}{q})L(\lambda)$;

    \item\label{propshiftb} If $1  \le p \leq 2$ and $p \leq q < \infty $, then for $\sigma \ge \alpha (\tfrac{1}{p}-\tfrac{1}{q})L(\lambda)$ the operator
$\tau_\sigma$ maps $\hh_p(\lambda)$ into $\hh_q(\lambda)$. 
\end{enumerate}
\end{proposition}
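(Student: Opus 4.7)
The plan is to construct, for each $\alpha\in[0,1]$, a frequency built by merging a hypercontractive piece (which pins down $L(\lambda)$) with a scaled copy of the ordinary Dirichlet frequency (which dictates the sharp strip). At the endpoint $\alpha=0$, take $\lambda_n=\log p_n$ where $p_n$ is the $n$-th prime: $L(\lambda)=1$ by the prime number theorem, and the $\mathbb{Z}$-linear independence of $(\log p_n)$ identifies $\mathcal{H}_r(\lambda)$ with the ``degree one'' subspace of $L^r(\mathbb{T}^\infty)$, on which Khintchine gives $\|\sum a_nz_n\|_r\sim_r (\sum|a_n|^2)^{1/2}$ for every $1\leq r<\infty$; consequently $\|\tau_\sigma\sum a_nz_n\|_q\sim_q(\sum|a_n|^2 p_n^{-2\sigma})^{1/2}\leq(\sum|a_n|^2)^{1/2}\sim_p\|\sum a_nz_n\|_p$ for all $\sigma\geq 0$, proving (b), while (a) is trivial. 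At $\alpha=1$, take $\lambda_n=\log n$: part (b) is immediate from Proposition~\ref{cormax}, and part (a) is the sharpness of the classical strip $(1/p-1/q)$ for ordinary Dirichlet series. For $(p,q)=(2,\infty)$ this is witnessed by $P_N(s)=\sum_{n\leq N}n^{-1/2}n^{-s}$, which satisfies $\|P_N\|_2\sim\sqrt{\log N}$ while $\|P_{N,\sigma}\|_\infty\geq |P_{N,\sigma}(0)|\gtrsim N^{1/2-\sigma}$ for $\sigma<1/2$; the other pairs follow by variants of this polynomial together with interpolation against the trivial boundedness of $\tau_\sigma:\mathcal{H}_p\to\mathcal{H}_p$.

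For $\alpha\in(0,1)$ I would take
\[
\lambda=\{\log p_n:n\geq 1\}\cup\{(\log n)/\alpha:n\geq 1\},
\]
merged in increasing order (for rational $\alpha$, perturb $1/\alpha$ slightly by an irrational to secure the independence used below). A direct count gives $\#\{\lambda_k\leq t\}\sim e^t/t+e^{\alpha t}\sim e^t/t$, so $L(\lambda)=1$. The key structural input is that, for irrational $\alpha$, the subgroups $H=\langle\log p_n\rangle_\mathbb{Z}$ and $K=\langle(\log p_n)/\alpha\rangle_\mathbb{Z}$ of $(\mathbb{R},+)$ intersect only in $\{0\}$, an elementary consequence of the $\mathbb{Q}$-linear independence of the $\log p_i$. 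Hence the Bohr group attached to $\lambda$ factorizes as a product $G_0\times G_1$, and the projections of $\mathcal{H}_p(\lambda)$ onto the prime and scaled parts become conditional expectations, in particular $L^p$-contractions, with $\|D^{(0)}\|_p,\|D^{(1)}\|_p\leq\|D\|_p$.

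For part~(a), use the test polynomial $P_N(s)=\sum_{n=1}^Nn^{-s/\alpha}$, supported on $\mu^{(1)}=\{(\log n)/\alpha\}$. Fubini on the product group gives $\|P_N\|_{\mathcal{H}_p(\lambda)}=\|P_N\|_{\mathcal{H}_p(\mu^{(1)})}$, and the substitution $s\mapsto s/\alpha$ identifies $P_N$ with $Q_N(s)=\sum_{n\leq N}n^{-s}$ in $\mathcal{H}_p(\log n)$ and $\tau_\sigma$ with $\tau_{\sigma/\alpha}$. The $\alpha=1$ sharpness then forces $\sigma/\alpha\geq 1/p-1/q$, i.e.\ $\sigma\geq\alpha(1/p-1/q)L(\lambda)$. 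For part~(b), decompose $D\in\mathcal{H}_p(\lambda)$ with $1\leq p\leq 2$ and $p\leq q<\infty$ as $D=D^{(0)}+D^{(1)}$ via the contractive projections above; the primes estimate yields $\|\tau_\sigma D^{(0)}\|_q\lesssim\|D\|_p$ for every $\sigma\geq 0$, and Proposition~\ref{cormax} applied to $\mu^{(1)}$ with $L(\mu^{(1)})=\alpha$ yields $\|\tau_\sigma D^{(1)}\|_q\lesssim\|D\|_p$ whenever $\sigma>\alpha(1/p-1/q)$. Summing gives the bound for $\sigma>\alpha(1/p-1/q)L(\lambda)$, and the boundary case is recovered by letting $\sigma'\downarrow\sigma$ using semigroup contractivity.

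The chief obstacle I anticipate is verifying the $\mathbb{Z}$-independence of $H$ and $K$ uniformly in $\alpha$: for irrational $\alpha$ it is elementary, but rational $\alpha$ needs a careful perturbation of $(\log n)/\alpha$ that preserves both $L(\mu^{(1)})=\alpha$ and the scaled Dirichlet polynomial structure used in (a). A secondary delicate point is the sharpness of ordinary Dirichlet at $\alpha=1$ across the full range $(p,q)$ beyond the $(2,\infty)$ endpoint, where one must combine the lower-bound polynomial above with Hausdorff--Young-type duality in $\mathcal{H}_q$ and interpolation against the trivial $\tau_\sigma:\mathcal{H}_p\to\mathcal{H}_p$.
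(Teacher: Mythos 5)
Your construction breaks down at its core: both pieces you merge are hypercontractive, so part \ref{propshifta} fails for every $\alpha>0$. The ordinary frequency $(\log n)$ is hypercontractive (Bayart's theorem, recalled in Section~\ref{secexa}), and rescaling a frequency by $1/\alpha$ leaves all the norms in \eqref{pnorm} unchanged while turning $\tau_\sigma$ into $\tau_{\sigma/\alpha}$, so $((\log n)/\alpha)$ is hypercontractive as well; a finite union of hypercontractive frequencies is again hypercontractive (remark after Theorem~\ref{corchar}, or directly $\Lambda_{2k,2}(A\cup B)\le \Lambda_{2k,2}(A)+\Lambda_{2k,2}(B)$ together with Lemma~\ref{theochar}). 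Hence for your $\lambda=\{\log p_n\}\cup\{(\log n)/\alpha\}$ the operator $\tau_\sigma$ maps $\hh_2(\lambda)$ into $\hh_4(\lambda)$ for \emph{every} $\sigma>0$, contradicting \ref{propshifta}. The ``sharpness of the classical strip $(1/p-1/q)$'' that you invoke at $\alpha=1$ does not exist: for $q<\infty$ one has $T_{q,p}((\log n))=0$, and even at $q=\infty$ the correct threshold is $T_{\infty,p}\le S_p((\log n))=1/2$, not $1/p$; your test polynomial only treats the single pair $(2,\infty)$, while \ref{propshifta} must hold for all pairs. This is exactly why the paper's proof does \emph{not} use $(\log n)$ but Bayart's non-hypercontractive frequency $\mu$ consisting of arithmetic progressions of length $2^j$ inside $[j,j+1)$, for which Lemma~\ref{lemext} gives $T_{q,p}(\mu)=(\tfrac1p-\tfrac1q)L(\mu)$ with boundedness attained at the endpoint, and then dilutes it with a $\Q$-linearly independent frequency $\nu$ with $L(\nu)=L(\mu)/\alpha$, using Lemma~\ref{lemsup} to split $\hh_p(\mu\cup\nu)$ over the two pieces. (Your $\alpha=0$ case with the primes is fine, and your splitting idea for rationally independent pieces is in the spirit of Lemma~\ref{lemsup}, but the structured piece must be non-hypercontractive with extremal $T_{q,p}$.)

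A second genuine gap is the endpoint in \ref{propshiftb}: the statement requires boundedness \emph{at} $\sigma=\alpha(\tfrac1p-\tfrac1q)L(\lambda)$, whereas Proposition~\ref{cormax} only yields the open range, and your proposal to ``recover the boundary case by letting $\sigma'\downarrow\sigma$ using semigroup contractivity'' cannot work: $\tau_\sigma$ does not factor through $\tau_{\sigma'}$ for $\sigma'>\sigma$, and the norms $\|\tau_{\sigma'}\|_{\hh_p\to\hh_q}$ may blow up as $\sigma'\downarrow\sigma$. In the paper the endpoint is a substantive argument exploiting the block structure of $\mu$: Nikolskii's inequality (Lemma~\ref{N-type}) on each progression is exactly compensated by the factor $e^{-j\sigma}$ at the critical $\sigma$, and the blocks are reassembled via Khintchine/Minkowski (type and cotype $2$), with an additional H\"older interpolation step to reach $p\le q<2$.
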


In \cite[Theorem~5.11]{Ba}, Bayart constructed a frequency $\lambda $ such that $\tau_\sigma$ maps $\hh_{2}(\lambda)$ into $\hh_{2k}(\lambda)$ if and only if $\sigma \ge \frac{k-1}{2k}$. He also asked in \cite[Question~5.12]{Ba} whether, for that frequency, $\tau_\sigma$ maps $\hh_{2}(\lambda)$ into $\hh_{p}(\lambda)$ if and only if $\sigma \ge \frac{p-2}{2p}$. Although we work with a slightly different frequency for notational convenience, the proof of Lemma~\ref{lemext} below, which is a main ingredient for the proofs of Propositions~\ref{propshift} and~\ref{example}, provides the tools to answer \cite[Question~5.12]{Ba} affirmatively.

Our third (and final) goal is to study strips of convergence. Define the $\mathcal{H}_p$-abscissa of $D$ as
\[
\sigma_p(D):=\inf\left \{ \sigma \in \R : \ D_\sigma \in \mathcal{H}_p(\lambda)  \right\} \,.
\]
where $D_\sigma$ stands for the translation defined in  \eqref{gallardo}.
Also define,
\begin{equation*}
    S_p(\lambda):=\sup\{\sigma_{a}(D) - \sigma_{p}(D): \ D \text{ Dirichlet series} \}
 =\sup_{D\in \mathcal{H}_p(\lambda)} \sigma_{a}(D),
\end{equation*}
where we understand $\infty-\infty$ to be $0$. By a closed-graph argument it is easy to check that $S_p(\lambda)$ is also the infimum over all $\sigma$ such that there exists a constant $C_\sigma$ satisfying
\begin{equation}\label{closedgraph}
  \sum |a_n| e^{-\lambda_n \sigma} \le C_\sigma \|D\|_p\,,
\end{equation}
for every $D=\sum a_n e^{-\lambda_n s}\in \mathcal{H}_p(\lambda)$.
It will be useful to encode the behaviour of $\tau_\sigma$ from a strip point of view by denoting
\begin{multline*}
 T_{q,p}(\lambda):=\sup\{\sigma_{q}(D) - \sigma_{p}(D): \ D \text{ Dirichlet series} \}
 \\=\inf\{\sigma>0 \colon \ \tau_\sigma: \hh_p(\lambda) \to \hh_q(\lambda) \text{ is bounded}\}.
\end{multline*}
We remark that again by a closed-graph argument, if $\tau_\sigma$ maps $\hh_p(\lambda) $ into $ \hh_q(\lambda)$, then it defines a bounded operator.
It follows from the definitions that $S_p(\lambda)$ and $T_{q,p}(\lambda)$ are decreasing in $p$, while $T_{q,p}(\lambda)$ is increasing in $q$ and $T_{\infty,p}(\lambda)\le S_{p}(\lambda)$. Note also that $\lambda$ being hypercontractive is just $T_{q,p}(\lambda)=0$ for every $1\leq p,q < \infty$.

As observed in \cite[Section~3.7]{DeSch}, we have
\begin{equation} \label{izagirre}
S_2(\lambda)=\frac{L(\lambda)}{2} \,.
\end{equation}
For ordinary Dirichlet series it is known that this holds for every $p$, that is, $S_p((\log n)_n)=1/2$ for every $1\le p \le \infty$ (see, for example, \cite{Ba2} and \cite[Theorem~12.11]{ElLibro}). This was extended to frequencies with similar algebraic structure in \cite[Corollary~3.35]{DeSch}. Hypercontractive frequencies also satisfy this, as the following remark shows.

\begin{remark} \label{bilbao}
From the definitions, for $1 \leq p < q$ we get
\begin{equation*}
    S_{q} (\lambda) \le S_{p} (\lambda) \le S_{q}(\lambda) + T_{q,p}(\lambda)  \,.
\end{equation*}
Hence, if $\lambda$ is hypercontractive, then  we have $S_{p}(\lambda) = S_{q}(\lambda)$ and as a consequence,
\[
S_{p} (\lambda) = \frac{L(\lambda)}{2} \,,
\]
for every $1 \leq p < \infty$.
\end{remark}

However, in \cite[Theorem~5.2]{Ba} Bayart constructs a frequency $\mu$ such that $S_1(\mu)=L(\mu)$,\footnote{A slightly different strip is used by Bayart, but the result holds in our context as well.} which in particular prevents hypercontractivity by the previous remark.

The following theorem~gives the general behaviour of $S_p(\lambda)$.

\begin{theorem}\label{rem}
For any frequency $\lambda$ we have
\begin{align}
  \frac{L(\lambda)}{2}\le S_p(\lambda)&\le \frac{L(\lambda)}{p}, \qquad \text{for } 1\le p\le 2 \,; \label{rem1}
\\  S_p(\lambda)&= \frac{L(\lambda)}{2}, \qquad \text{for } 2\le p <\infty \,.\label{rem2}
\end{align}
Moreover, for $1\le p\le 2$, we have that 
$p S_p(\lambda)$ is increasing in $p$, $S_p(\lambda)$ is a Lipschitz function of $p$ and for almost every $p$ 
\begin{equation}   \label{eqlip}
    -\frac{S_p(\lambda)}{p}\le \frac{d S_p(\lambda)}{d p} \le 0.
\end{equation}
\end{theorem}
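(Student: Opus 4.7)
For the bounds~\eqref{rem1}--\eqref{rem2}, I would combine Hausdorff--Young and H\"older for the upper bound $S_p(\lambda)\le L(\lambda)/p$, $p\in[1,2]$: the Hausdorff--Young inequality in $\hh_p(\lambda)$ yields $(\sum_n|a_n|^{p'})^{1/p'}\le\|D\|_p$, so
\[
\sum_n|a_n|e^{-\lambda_n\sigma}\;\le\;\|D\|_p\,\Big(\sum_n e^{-\lambda_n\sigma p}\Big)^{1/p},
\]
and the second factor is finite precisely for $\sigma p>L(\lambda)$. The matching lower bound $S_p(\lambda)\ge L(\lambda)/2$ (valid for every $p<\infty$) comes from a Kahane--Khintchine randomization: for any $\alpha>L(\lambda)$ the square-summable sequence $(e^{-\lambda_n\alpha/2})_n$ admits signs $(\varepsilon_n)\in\{\pm1\}^{\N}$ for which $D_\varepsilon:=\sum_n\varepsilon_n e^{-\lambda_n\alpha/2}e^{-\lambda_n s}$ belongs to $\hh_p(\lambda)$ for every $p<\infty$, while $\sigma_a(D_\varepsilon)=L(\lambda)-\alpha/2\to L(\lambda)/2$ as $\alpha\downarrow L(\lambda)$. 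For $p\ge 2$ the remaining upper bound in~\eqref{rem2} is immediate from $\hh_p\subseteq\hh_2$ and~\eqref{izagirre}.

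For the monotonicity of $pS_p(\lambda)$ on $[1,2]$, the key step is the refined shift estimate
\begin{equation*}
T_{p_1,p_0}(\lambda)\;\le\;\tfrac{p_1-p_0}{p_1}\,S_{p_0}(\lambda),\qquad 1\le p_0\le p_1,
\end{equation*}
which I would prove by Stein complex interpolation. Fix $\sigma>S_{p_0}(\lambda)$ and consider the analytic family $T_z:=\tau_{z\sigma}$ on the strip $\{0\le\re z\le 1\}$ with source $\hh_{p_0}(\lambda)$: at $\re z=0$ the operator $T_{iy}$ is an isometry on $\hh_{p_0}$ (multiplication by the character $e^{-\lambda_n iy\sigma}$ preserves $L^{p_0}$-norms on the Bohr compactification), while at $\re z=1$, by~\eqref{closedgraph}, $\|T_{1+iy}D\|_\infty\le\sum_n|a_n|e^{-\lambda_n\sigma}\le C\|D\|_{p_0}$ uniformly in $y$. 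Stein's theorem then produces a bounded $\tau_{\theta\sigma}:\hh_{p_0}(\lambda)\to\hh_{p_0/(1-\theta)}(\lambda)$ for every $\theta\in(0,1)$, so $T_{p_0/(1-\theta),p_0}(\lambda)\le\theta\sigma$; letting $\sigma\downarrow S_{p_0}$ and writing $p_1:=p_0/(1-\theta)$, i.e.\ $\theta=(p_1-p_0)/p_1$, yields the displayed estimate. Substituting into $S_{p_0}\le S_{p_1}+T_{p_1,p_0}$ from Remark~\ref{bilbao} and rearranging gives $p_0S_{p_0}\le p_1S_{p_1}$.

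The Lipschitz property and~\eqref{eqlip} then follow from the two monotonicities: $dS_p/dp\le 0$ a.e.\ is the (already established) decreasing of $S_p$ in $p$, while $pS_p$ non-decreasing gives $d(pS_p)/dp\ge 0$ a.e., i.e.\ $dS_p/dp\ge -S_p/p$; combined with $S_p\le L(\lambda)/p$ these imply $|dS_p/dp|\le L(\lambda)/p^2\le L(\lambda)$ on $[1,2]$, hence the Lipschitz continuity. The main obstacle in the plan is the complex interpolation step, where one has to check that Stein's theorem applies cleanly to $\tau_{z\sigma}$ in the Bohr-group $L^p$-framework underlying $\hh_p(\lambda)$ (standard in~\cite{DeSch}) and that the interpolated operator $\tau_{\theta\sigma}$ indeed lands in $\hh_{p_0/(1-\theta)}(\lambda)$; this last point is automatic since $\tau_{\theta\sigma}$ only modifies Dirichlet coefficients, so the real work is purely the boundary estimates and admissibility of the family.
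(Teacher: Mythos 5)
Your central new ingredient---the shift estimate $T_{p_1,p_0}(\lambda)\le\bigl(1-\tfrac{p_0}{p_1}\bigr)S_{p_0}(\lambda)$ obtained by Stein interpolation of the analytic family $\tau_{z\sigma}$ between the isometric line $\re z=0$ and the $\hh_{p_0}\to\hh_\infty$ bound at $\re z=1$ (which is exactly \eqref{closedgraph})---is sound: vertical translations are isometries on $H^\lambda_{p_0}(G)$ by translation invariance of Haar measure, the family is trivially admissible on Dirichlet polynomials, and density of polynomials transfers the bound. This gives precisely the same inequality the paper derives from Corollary~\ref{corlil} (i.e.\ localization, Lemma~\ref{theochar}, plus Littlewood's inequality, Lemma~\ref{lelil}) combined with $T_{\infty,p}\le S_p$, so your route genuinely bypasses localization. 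The Hausdorff--Young upper bound in \eqref{rem1} is the paper's argument verbatim; for the lower bound $S_p\ge L(\lambda)/2$ the paper instead averages \eqref{closedgraph} over random signs and applies Kahane--Khinchin, whereas you invoke the existence of one sign choice putting $\sum\varepsilon_n e^{-\lambda_n\alpha/2}e^{-\lambda_n s}$ in every $\hh_p$ --- correct, but it rests on the (standard, unproved here) almost sure $L^p$-convergence of random character series.

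There are, however, two concrete gaps. First, the case $L(\lambda)=\infty$: your lower-bound construction needs square-summable coefficients, hence $L(\lambda)<\infty$, and the rearrangement $p_0S_{p_0}\le p_1S_{p_1}$ tacitly uses $S_{p_0}<\infty$; yet the theorem is asserted for every frequency. The paper closes this by showing $S_p(\lambda)<\infty$ forces $L(\lambda)<\infty$ (apply \eqref{closedgraph} and Hausdorff--Young \eqref{hy2} to $\sum_{\lambda_n\in[j,j+1)}e^{-\lambda_n s}$ and use Lemma~\ref{lella}), so that $L(\lambda)=\infty$ implies $S_p(\lambda)=\infty$ for all $p$ and all assertions hold; your proposal contains no argument for this case. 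Second, the Lipschitz step: inferring Lipschitz continuity from an a.e.\ derivative bound of a monotone function is invalid (the Cantor function is increasing with bounded derivative a.e.\ and is not Lipschitz), and a.e.\ differentiability must be secured before \eqref{eqlip} even makes sense. The repair is immediate from the finite-difference form of your two monotonicities: for $1\le p<q\le 2$, $pS_p\le qS_q$ and $S_q\le S_p$ give $0\le S_p-S_q\le\frac{q-p}{p}\,S_q\le (q-p)L(\lambda)$, which is the Lipschitz bound; then differentiability a.e.\ and \eqref{eqlip} follow by dividing by $q-p$ and letting $q\to p$, exactly as in the paper. With these two repairs your argument is complete.
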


\begin{remark} \label{bilbao-2}
Recall that, by Remark~\ref{bilbao}, if $T_{2,1}(\lambda)=0$  then $S_{1} (\lambda)= S_{2} (\lambda)$. This, in view of \eqref{rem2}, gives
\[
S_{p} (\lambda) = \frac{L(\lambda)}{2} \,,
\]
for every $1 \leq p < \infty$. As we will see later (see Corollary~\ref{corlil}), $T_{2,1}(\lambda)=0$ whenever $T_{q,p}(\lambda)=0$ for some $1\le p,2\le q$. So, in this case we also have $S_{p}(\lambda) = \frac{L(\lambda)}{2}$ for every $p$. The condition $T_{4,2}(\lambda)=0$ is easier to check since the $\hh_2(\lambda)$ and $\hh_4(\lambda)$ norms of a series can be computed in terms of the coefficients (see also Lemma~\ref{lemen} for a sufficient condition depending only on $\lambda$).
\end{remark}

We also mention that, in the particular case that $L(\lambda)=\infty$, Theorem~\ref{rem} ensures that $S_p(\lambda)$ is also $\infty$ for all $1\le p<\infty$, answering a question posed in \cite[Section~4.4]{CaDeMaSch21}.

As a consequence of \eqref{eqlip} we get a rigidity result for extremal cases.
Given a frequency $\lambda$, define
\begin{equation}   \label{defpo}
    p_{0}(\lambda):=\inf\{1\le p\le 2: \ S_p(\lambda)=L(\lambda)/2 \}.
\end{equation}
Since  $S_{p}(\lambda)$ is continuous as a function of $p$, the infimum is attained, that is, we have $S_{p_{0}(\lambda)}(\lambda)=L(\lambda)/2$.

\begin{proposition}\label{propext}
For any frequency $\lambda$, every $1\le p\le  p_0(\lambda)$ and $p\le q\le \infty$, we have
\begin{equation}    \label{eqext}
    S_p(\lambda)\le\tfrac{p_{0}(\lambda) L(\lambda)}{2p}, \,
     \text{ and } \,
    T_{q,p}(\lambda)\le \tfrac{p_{0}(\lambda) L(\lambda)}{2}(\tfrac{1}{p}-\tfrac{1}{q}).
\end{equation}
Moreover, assume that $p_{0}(\lambda)>1$. Given $1\le p< p_0$, the following are equivalent:
     \begin{enumerate}[label=(\roman*)]
	\item\label{it1} $S_p(\lambda)=\tfrac{p_{0}(\lambda) L(\lambda)}{2p}$;
    \item\label{it3} $S_r(\lambda)=\tfrac{p_{0}(\lambda) L(\lambda)}{2r}$ for every $p\le r \le p_{0}(\lambda)$;
    \item\label{it2} $T_{q,p}(\lambda)= \tfrac{p_{0}(\lambda) L(\lambda)}{2}(\tfrac{1}{p}-\tfrac{1}{q})$ for some $p< q \le \infty$;
        \item\label{it4} $T_{q,r}(\lambda)= \tfrac{p_{0}(\lambda) L(\lambda)}{2}(\tfrac{1}{r}-\tfrac{1}{q})$ for every $p\le r < p_{0}(\lambda)$ and $r< q \le \infty$.
    \end{enumerate}
    In particular, if $S_1(\lambda)=L(\lambda)$, then $p_{0}(\lambda)=2$  and all the strips involved are known.
\end{proposition}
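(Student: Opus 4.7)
The plan breaks into three parts: (a) the two upper bounds in \eqref{eqext}, (b) the chain of equivalences, and (c) the consequence when $S_1(\lambda)=L(\lambda)$. The pivotal tool I will rely on is the convexity of $T_{q,p}(\lambda)$ as a function of $(1/p,1/q)$, which I expect to follow via Stein complex interpolation applied to the analytic family $T(z):=\tau_{(1-z)\sigma_0+z\sigma_1}$ between two bounded endpoints, using that $\tau_{it}$ acts as an isometry on every $\hh_r(\lambda)$ (on the group-theoretic model it is just multiplication by a unimodular character). Setting up this complex interpolation for general $\hh_r(\lambda)$ is the main obstacle I anticipate; everything else rests on convex-analytic manipulations together with Theorem~\ref{rem} and Remark~\ref{bilbao}.

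Writing $f(u,v):=\tfrac{p_0(\lambda)L(\lambda)}{2}(u-v)$, I handle part (a) as follows. The bound $S_p(\lambda)\le p_0L(\lambda)/(2p)$ is immediate from Theorem~\ref{rem}: since $rS_r(\lambda)$ is non-decreasing on $[1,2]$ and $p_0S_{p_0}(\lambda)=p_0L(\lambda)/2$, we get $pS_p(\lambda)\le p_0L(\lambda)/2$. For $T_{q,p}(\lambda)\le f(1/p,1/q)$ I observe that the point $(1/p,1/q)$ lies in the trapezoid $\mathcal{T}$ with vertices $(1/p_0,0),(1/p_0,1/p_0),(1,0),(1,1)$, corresponding to $(p,q)=(p_0,\infty),(p_0,p_0),(1,\infty),(1,1)$. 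A vertex-by-vertex check gives $T\le f$ at each corner: $T_{\infty,p_0}\le S_{p_0}=L/2=f(1/p_0,0)$; $T_{\infty,1}\le S_1\le p_0L/2=f(1,0)$, using the $S_p$ bound just proven with $p=1$; and $T_{p_0,p_0}=T_{1,1}=0$. Convexity of $T$ together with linearity of $f$ then propagates $T\le f$ to all of $\mathcal{T}$.

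For part (b), I run the cycle (iii)$\Rightarrow$(i)$\Rightarrow$(ii)$\Rightarrow$(iv)$\Rightarrow$(iii). For (iii)$\Rightarrow$(i), fix $p$ and set $g(v):=T_{1/v,p}(\lambda)$ on $[0,1/p]$; this is convex in $v$, dominated by $f(1/p,\cdot)$, with $g(1/p)=0=f(1/p,1/p)$, and by hypothesis $g(1/q_0)=f(1/p,1/q_0)$ for some $q_0\in(p,\infty]$. If $q_0=\infty$ this is already $g(0)=p_0L/(2p)$; if $q_0<\infty$, the nonnegative concave function $h:=f(1/p,\cdot)-g$ vanishes at $v=1/p$ and at the interior point $1/q_0\in(0,1/p)$, and a nonnegative concave function with an interior zero is identically zero, so in particular $h(0)=0$. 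Either way $T_{\infty,p}(\lambda)=p_0L/(2p)$, and combined with $T_{\infty,p}\le S_p\le p_0L/(2p)$ we obtain (i). The step (i)$\Rightarrow$(ii) is a monotonicity sandwich: $p_0L/2=pS_p\le rS_r\le p_0S_{p_0}=p_0L/2$ on $[p,p_0]$. For (ii)$\Rightarrow$(iv), fix $r\in[p,p_0)$; Remark~\ref{bilbao} yields $T_{p_0,r}(\lambda)\ge S_r-S_{p_0}=L(p_0-r)/(2r)=f(1/r,1/p_0)$, and part (a) supplies the matching upper bound, so $T_{p_0,r}=f(1/r,1/p_0)$. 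Applying the same one-dimensional argument to $g_r(v):=T_{1/v,r}(\lambda)$ at the interior zero $v=1/p_0\in(0,1/r)$ (using $r<p_0$) propagates the equality and yields $T_{q,r}(\lambda)=f(1/r,1/q)$ for every $q\in(r,\infty]$. Finally, (iv)$\Rightarrow$(iii) is immediate by taking $r=p$ and any $q>p$.

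Part (c) follows in one line: $S_1(\lambda)=L(\lambda)$ combined with part (a) at $p=1$ forces $L\le p_0L/2$, so $p_0(\lambda)\ge2$; since $p_0(\lambda)\le2$ by definition, $p_0(\lambda)=2$, $S_1=L=p_0L/(2\cdot1)$ is exactly (i) at $p=1$, and the equivalences then pin down all remaining strips and shifts.
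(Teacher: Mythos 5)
Your convex-analytic skeleton (vertex checks on the trapezoid, the monotonicity sandwich $pS_p\le rS_r\le p_0S_{p_0}$, the lower bound $T_{s,r}\ge S_r-S_s$ from Remark~\ref{bilbao}, the ``nonnegative concave function with an interior zero vanishes'' trick, and the final argument for $S_1(\lambda)=L(\lambda)$) is sound and runs parallel to the paper's proof. But the entire construction hangs on one tool you never establish: the convexity of $(1/p,1/q)\mapsto T_{q,p}(\lambda)$, which you yourself flag as the ``main obstacle'' and only \emph{expect} to follow from Stein interpolation for the analytic family $\tau_{(1-z)\sigma_0+z\sigma_1}$. This is a genuine gap, and not a routine one. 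The hypotheses you would feed into Stein--Weiss interpolation are bounds for $\tau_{\sigma_j}$ only on the Hardy subspaces $\hh_{p_j}(\lambda)\subseteq L^{p_j}(G)$, not on all of $L^{p_j}(G)$; the standard three-lines argument deforms the test function to $|f|^{p_\theta/p(z)}\operatorname{sgn}(f)$, which destroys the spectral constraint $\supp\widehat f\subseteq\{h_{\lambda_n}\}$, so the endpoint hypotheses no longer apply to the deformed functions. To salvage the operator-interpolation route you would need something like $[\hh_{p_0}(\lambda),\hh_{p_1}(\lambda)]_\theta=\hh_{p_\theta}(\lambda)$ for general frequencies, which is nowhere in the paper and is not obviously available (there is no bounded Riesz-type projection onto the spectrum of a general $\lambda$). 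So as written, the bound $T_{q,p}\le\tfrac{p_0L}{2}(\tfrac1p-\tfrac1q)$ in \eqref{eqext} and the implications (iii)$\Rightarrow$(i) and (ii)$\Rightarrow$(iv) are unproved.

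The fix is already in the paper and requires no operator interpolation: Corollary~\ref{corlil}, which follows from Littlewood's (Lyapunov's) inequality applied to a \emph{fixed} function (Lemma~\ref{lelil}) together with the localization Lemma~\ref{theochar}, gives exactly the slope comparison $\tfrac{T_{q,p}}{1/p-1/q}\le p\,T_{\infty,p}$ and, more generally, $T_{s,r}\le\tfrac{1/r-1/s}{1/r-1/q}\,T_{q,r}$ for $s\le q$. Substituting this for your convexity claim, everything goes through: the chain $\tfrac{T_{q,p}}{1/p-1/q}\le pT_{\infty,p}\le pS_p\le rS_r\le p_0S_{p_0}=\tfrac{p_0L}{2}$ gives \eqref{eqext} and (iii)$\Rightarrow$(i) by forcing equality throughout; and for (ii)$\Rightarrow$(iv) you combine $S_r-S_s\le T_{s,r}$ with the displayed comparison for a choice $r<s\le\min(q,p_0)$ (note that in your version, when $q<p_0$ you cannot simply take $s=p_0$ without the unproven convexity; choosing $s\le q$ as the paper does avoids this). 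With that replacement your argument becomes essentially the paper's proof.
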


Bayart's example from \cite[Theorem~5.2]{Ba} (mentioned 
after Remark~\ref{bilbao}) achieves the extremal behaviour from Proposition~\ref{propext} for $p_0(\lambda)=2$ showing that \eqref{rem1} and \eqref{eqlip} in Theorem~\ref{rem} cannot be improved. Next proposition extends this to all values of $p_0(\lambda)$.

\begin{proposition}\label{example}
  For any $1\le p_0 \le 2$, there exists a frequency $\lambda$ such that
  \begin{equation}       \label{eqpo}
      S_p(\lambda)=\begin{cases}
       \frac{p_0 L(\lambda)}{2p} & \text{if } 1\le p \le p_0
      \\  \frac{L(\lambda)}{2} &\text{if } p_0 \le p <\infty
  \end{cases}.
  \end{equation}
  In particular, for every $1\le p \le q \le 2$ there is a frequency $\lambda$ such that $S_p(\lambda)=L(\lambda)/q$.
\end{proposition}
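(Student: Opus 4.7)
The plan rests on reducing, via Theorem~\ref{rem}, the whole formula~\eqref{eqpo} to the two endpoint values $S_{p_0}(\lambda)$ and $S_1(\lambda)$. Indeed, \eqref{rem2} immediately gives $S_p(\lambda) = L(\lambda)/2$ for $p \ge 2$. For $p_0 \le p \le 2$, the monotonicity of $S_p$ in $p$ together with $S_p \ge S_2 = L(\lambda)/2$ pins $S_p(\lambda) = L(\lambda)/2$ as soon as $S_{p_0}(\lambda) = L(\lambda)/2$. For $1 \le p \le p_0$, the monotonicity of $p \mapsto p S_p(\lambda)$ from Theorem~\ref{rem} squeezes $p S_p(\lambda)$ between $1 \cdot S_1(\lambda) = p_0 L(\lambda)/2$ and $p_0 S_{p_0}(\lambda) = p_0 L(\lambda)/2$, forcing $S_p(\lambda) = p_0 L(\lambda)/(2p)$. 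Thus it is enough to exhibit, for each $p_0 \in [1,2]$, a frequency $\lambda$ with $L(\lambda)>0$, $S_{p_0}(\lambda) = L(\lambda)/2$, and $S_1(\lambda) = p_0 L(\lambda)/2$.

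The endpoint cases are already available. For $p_0 = 2$, Bayart's extremal frequency from \cite[Theorem~5.2]{Ba} achieves $S_1(\mu) = L(\mu)$, while \eqref{izagirre} automatically gives $S_2(\mu) = L(\mu)/2$. For $p_0 = 1$, any frequency with $L(\lambda)>0$ and $T_{2,1}(\lambda) = 0$ does the job; for instance $(\log n)_n$, or any hypercontractive frequency, via Remark~\ref{bilbao}. For the interior range $1 < p_0 < 2$ we adapt the construction behind Lemma~\ref{lemext}, the main technical device also powering Proposition~\ref{propshift}. Within each unit interval $[j,j+1)$ we place $N_j \sim e^{L(\lambda) j}$ frequency points in a hybrid pattern: a partially rigid Bayart-type skeleton, responsible for a subexponential drop of the $\mathcal{H}_1$-norm below the trivial $\sqrt{N_j}$ scale, combined with an independent Khintchine-type component that prevents this drop from persisting all the way to $p = p_0$. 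The parameters are calibrated so that the block $D_j := \sum_{\lambda_n \in [j,j+1)} e^{-\lambda_n s}$ satisfies $\|D_j\|_1 \sim e^{L(\lambda)(2-p_0) j/2}$ and $\|D_j\|_{p_0} \sim e^{L(\lambda) j/2}$. Reading off the corresponding abscissas of $\sum_j D_j$ then produces exactly the two required values.

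The hardest point is precisely this simultaneous calibration: the construction must contain enough cancellation at $p = 1$ to beat the Khintchine bound $\|D_j\|_1 \lesssim \sqrt{N_j}$, yet cannot cancel so much that the drop is inherited at $p = p_0$, where the $\mathcal{H}_{p_0}$-norm has to saturate at the Hausdorff--Young rate $\sqrt{N_j}$. Establishing the sharp matching upper and lower estimates at both $p = 1$ and $p = p_0$ is the technical core of Lemma~\ref{lemext}; once that lemma is in hand with its parameters tied to the prescribed transition $p_0$, Proposition~\ref{example} follows by combining it with the reduction of the first paragraph. The ``In particular'' assertion is then immediate: given $1 \le p \le q \le 2$, choose $p_0 = 2p/q$, which lies in $[1,2]$ and satisfies $p \le p_0$, so the formula \eqref{eqpo} gives $S_p(\lambda) = p_0 L(\lambda)/(2p) = L(\lambda)/q$.
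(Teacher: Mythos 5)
Your opening reduction (via Theorem~\ref{rem} and the monotonicity of $p\mapsto pS_p(\lambda)$, it suffices to produce $\lambda$ with $S_{p_0}(\lambda)=L(\lambda)/2$ and $S_1(\lambda)=p_0L(\lambda)/2$) is correct, as are the endpoint cases $p_0=2$ (Bayart) and $p_0=1$ (any hypercontractive frequency with $L>0$), and the ``in particular'' deduction with $p_0=2p/q$. The gap is the whole interior case $1<p_0<2$: the construction you describe is never carried out, and the calibration you prescribe for it is in fact impossible. If $D_j$ is the all-ones block on $N_j\sim e^{L(\lambda)j}$ points, then $\|D_j\|_2\sim e^{L(\lambda)j/2}$, and Littlewood's inequality \eqref{Little} with exponents $1<p_0\le 2$ gives $\|D_j\|_{p_0}\le \|D_j\|_1^{2/p_0-1}\,\|D_j\|_2^{2-2/p_0}$; so the requirement $\|D_j\|_{p_0}\sim e^{L(\lambda)j/2}$ forces $\|D_j\|_1\gtrsim e^{L(\lambda)j/2}$, which contradicts your other requirement $\|D_j\|_1\sim e^{L(\lambda)(2-p_0)j/2}$ as soon as $p_0<2$. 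No single polynomial can witness both behaviours, and this is not a technicality: the extremal behaviour at $p=1$ and the saturation at $p=p_0$ must come from different families of polynomials. Moreover, even with consistent targets, ``reading off the abscissas of $\sum_j D_j$'' can only produce lower bounds for strips; the equalities $S_{p_0}(\lambda)=L(\lambda)/2$ and $S_1(\lambda)=p_0L(\lambda)/2$ require upper bounds valid for \emph{every} series in $\hh_{p_0}(\lambda)$, resp.\ $\hh_1(\lambda)$, which your sketch never addresses. Finally, attributing the needed calibrated hybrid block to Lemma~\ref{lemext} is inaccurate: that lemma concerns only Bayart's frequency $\mu$ and the exact threshold for $\tau_\sigma\colon\hh_p(\mu)\to\hh_q(\mu)$; it contains no $p_0$-dependent construction.

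For comparison, the paper obtains the interior case not by mixing inside each block but by taking the union $\lambda=\mu\cup\nu$ of Bayart's $\mu$ with a $\Q$-linearly independent $\nu$ satisfying $\langle\mu\rangle_\Q\cap\langle\nu\rangle_\Q=0$ and $L(\nu)=2L(\mu)/p_0$ (the frequency of Proposition~\ref{propshift} with $\alpha=p_0/2$). Lemma~\ref{lemsup} then splits the Hardy norms, so $S_p(\lambda)=\max\{S_p(\mu),S_p(\nu)\}\le\max\{L(\mu)/p,\,L(\nu)/2\}$, which yields the upper bound $S_{p_0}(\lambda)=L(\lambda)/2$; Lemmas~\ref{lemext} and~\ref{lemsup} give the exact value $T_{q,p}(\lambda)=\tfrac{p_0}{2}L(\lambda)(\tfrac1p-\tfrac1q)$, and the rigidity result (Proposition~\ref{propext}) converts this extremal equality into $S_p(\lambda)=p_0L(\lambda)/(2p)$ for all $1\le p\le p_0$. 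Your first-paragraph reduction would mesh with that route (with the $S_1$ lower bound supplied by Bayart's Dirichlet-kernel computation $S_1(\mu)=L(\mu)$), but as written the key construction is missing and its stated properties are mutually inconsistent.
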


We are not aware of any frequency that fails to satisfy \eqref{eqpo} for some $p_0$.
The proof of Proposition~\ref{example} uses a frequency $\lambda=\mu \cup \nu$, where $\mu$ is Bayart's example and $\nu$ is $\Q$-li, where the addition of $\nu$ serves to dilute the influence of the additive structure of $\mu$ in a controlled manner. The same frequency $\lambda$ is used for Proposition~\ref{propshift}.

Let us end this section by stating the main tool behind the results we present. It is a localization argument that allows to break down our analysis of the frequency $\lambda$ and study the behaviour of $\lambda \cap [j,j+1)$ for $j\in \N_0$ separately. This enables us to leverage more classical results related to $\Lambda(p)$-sets defined below and the theory of additive combinatorics. For more information on the subject, we refer for example to \cite{TaVu} .

For $0<p\le q \le \infty$ and a given finite or countable $A \subseteq [0,\infty)$, we denote by $\Lambda_{q,p}(A) \in [1, \infty]$ the infimum over all constants $C \ge 1$ so that
\begin{equation} \label{brandenburgo}
\Big\|\sum_{n\colon \lambda_n \in A} a_n e^{-\lambda_n s}\Big\|_q\leq C \Big\|\sum_{n\colon \lambda_n \in A} a_n e^{-\lambda_n s}\Big\|_p \,,
\end{equation}
for every choice of coefficients $a_n$. For a fixed $q$, we say that $A$ is a \emph{$\Lambda(q)$-set} (see~\cite{rudin1960trigonometric}) if $\Lambda_{q,p}(A) <\infty$ for some (and then every) $p<q$.

\begin{lemma}\label{theochar}
	For every $1\le p\le q\le \infty$ and every frequency $\lambda$,
	\[
	T_{q,p}(\lambda)=\limsup_{j\to\infty} \frac{\log \Lambda_{q,p}(\lambda\cap [j,j+1))}{j}\,.
	\]
\end{lemma}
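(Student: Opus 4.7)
The plan is to prove the two inequalities separately. The common engine is a block factorization: for a Dirichlet polynomial $P$ supported in $\lambda_j := \lambda\cap[j,j+1)$, the frequency shift by $j$ (multiplication by the unimodular factor $e^{-ijt}$ preserves the time-averaged absolute value) combined with the contractivity of $\tau_\sigma$ on $\mathcal{H}_p(\lambda)$ yields
\begin{equation*}
    \|\tau_\sigma P\|_p \le e^{-j\sigma}\|P\|_p, \qquad \sigma > 0.
\end{equation*}

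For the lower bound $T_{q,p}(\lambda) \ge \limsup_j j^{-1}\log \Lambda_{q,p}(\lambda_j)$, I would fix $\sigma > T_{q,p}(\lambda)$, so that $C := \|\tau_\sigma\|_{\mathcal{H}_p\to\mathcal{H}_q}$ is finite, and then invert via $P = \tau_{-\sigma}(\tau_\sigma P)$ for a Dirichlet polynomial $P$ supported in $\lambda_j$. After the same frequency shift, $\tau_{-\sigma}$ becomes $e^{j\sigma}$ times the multiplier $M_\sigma\colon f\mapsto \sum b_n e^{\mu_n\sigma}e^{-\mu_n s}$ acting on polynomials with spectrum in $[0,1)$. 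Extending the symbol $e^{\mu\sigma}\chi_{[0,1)}(\mu)$ to a smooth compactly supported function on $\R$, its inverse Fourier transform is Schwartz, hence in $L^1$; Young's inequality in the Bohr-group realization of $\mathcal{H}_q(\lambda)$ then gives $\|M_\sigma f\|_q \le c_\sigma\|f\|_q$ for every $q\in[1,\infty]$, with $c_\sigma$ depending only on $\sigma$ (not on $j$). Combining, $\|P\|_q \le c_\sigma C\, e^{j\sigma}\|P\|_p$, so $\Lambda_{q,p}(\lambda_j) \le c_\sigma C\, e^{j\sigma}$; letting $\sigma\downarrow T_{q,p}(\lambda)$ yields the inequality.

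For the upper bound $T_{q,p}(\lambda) \le \limsup_j j^{-1}\log\Lambda_{q,p}(\lambda_j)$, let $\sigma$ exceed the right-hand side and pick $\alpha$ strictly between them, so $\Lambda_{q,p}(\lambda_j)\le e^{j\alpha}$ for $j\ge j_0$. Decomposing a Dirichlet polynomial $D = \sum_j D_j$ into blocks and combining the triangle inequality with the definition of $\Lambda_{q,p}(\lambda_j)$ and the key identity gives
\begin{equation*}
\|\tau_\sigma D\|_q \le \sum_j\|\tau_\sigma D_j\|_q \le \sum_j \Lambda_{q,p}(\lambda_j)\, e^{-j\sigma}\|D_j\|_p,
\end{equation*}
whose prefactor decays geometrically as $e^{-j(\sigma-\alpha)}$ for $j\ge j_0$. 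The main obstacle is a subexponential block-projection estimate $\|D_j\|_p\le C_j\|D\|_p$ with $(\log C_j)/j\to 0$; I would establish it via smooth spectral cutoffs $\phi_j$ centred on $[j,j+1)$, whose Fourier transforms are $L^1$-bounded uniformly in $j$ and so define multipliers of uniformly bounded norm on $\mathcal{H}_p(\lambda)$, handling the unavoidable leakage onto the adjacent blocks $\lambda_{j-1}$ and $\lambda_{j+1}$ by an iterative or multiscale scheme of Fejer/de la Vall\'ee Poussin type. Once this is in hand, the series above is $O_\sigma(1)\|D\|_p$, and density of polynomials concludes the boundedness of $\tau_\sigma\colon\mathcal{H}_p(\lambda)\to\mathcal{H}_q(\lambda)$, so $T_{q,p}(\lambda)\le\sigma$.
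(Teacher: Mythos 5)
Your lower-bound argument is correct, though heavier than needed: where you invert $\tau_\sigma$ on a block by transferring a smooth compactly supported extension of $e^{\mu\sigma}\chi_{[0,1)}$ to an $L^1$ kernel on the Dirichlet group, the paper gets the same estimate $\|D_{-\sigma}\|_p\le e^{(j+1)\sigma}\|D\|_p$ purely formally (Lemma~\ref{monotone2}), by reflecting the frequencies inside the block and reusing the contractivity of $\tau_\sigma$; both give the uniform-in-$j$ bound, so this half is fine.

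The genuine gap is in the upper bound, exactly at the step you yourself flag as ``the main obstacle'': the block-projection estimate $\|D_j\|_p\le C_j\|D\|_p$ with $(\log C_j)/j\to 0$. Your proposed route --- smooth cutoffs $\phi_j$ with uniformly $L^1$-bounded kernels, with the leakage onto $\lambda_{j-1},\lambda_{j+1}$ removed by ``an iterative or multiscale scheme of Fej\'er/de la Vall\'ee Poussin type'' --- does not deliver this. A de la Vall\'ee Poussin--type multiplier can only equal $1$ on $[j,j+1)$ at the price of being nonzero on a neighbourhood, and the leakage it creates is itself a \emph{sharp} spectral restriction of $D$ to parts of the adjacent blocks; subtracting it requires exactly the projection you are trying to construct, so the iteration is circular. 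The sharp cutoff is a Riesz-type projection: for $1<p<\infty$ its uniform boundedness on $\hh_p(\lambda)$ is a nontrivial known input (the monomials form a Schauder basis, \cite[Theorem~4.16]{DeSch}), which you would at least need to invoke; and for $p=1$ it is simply false with uniform (or, in general, sub-exponential-in-$j$) constants, since blocks can contain enormously many frequencies. The paper circumvents this precisely here: for $p=1$ it measures the block in the weaker $\hh_{1-\varepsilon}$ norm using Helson's bound for partial sums (Lemma~\ref{parsum}), and pays for returning to the $p$-norm with the factor $\Lambda_{p,p-\varepsilon}(\lambda\cap[j,j+1))\le \Lambda_{q,p}(\lambda\cap[j,j+1))^{\alpha}$ from Lemma~\ref{lelil}, which is subexponential by hypothesis and is absorbed into the geometric decay by enlarging $\sigma$ slightly. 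Without some such device (or the Schauder-basis input for $p>1$), your sum $\sum_j \Lambda_{q,p}(\lambda_j)e^{-j\sigma}\|D_j\|_p$ cannot be closed, so as written the proof of $T_{q,p}(\lambda)\le\limsup_j j^{-1}\log\Lambda_{q,p}(\lambda\cap[j,j+1))$ is incomplete, most seriously at the endpoint $p=1$.
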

Notice that for $p=2$, $q=2k$ (see for example \cite{LeLe}),
\begin{equation*}
	\Lambda_{2k,2}(A)\le \max_{x \in kA}\#\{x_1+\ldots +x_k=x\}^{1/2k}.
\end{equation*}
This recovers \cite[Proposition~5.4]{Ba} mentioned above.

\section{The examples}\label{secexa}
All known examples of hypercontractive frequencies can be recovered through our methods, albeit for some of them through a more involved argument. It is however worth going over them again through this more general point of view.

There are two ways for a frequency to be trivially hypercontractive, which are related to the analytic and the algebraic structure respectively: low density of points ($L(\lambda)=0$) and linear independence of $\lambda$ over $\Q$. The case $L(\lambda)=0$ follows from Theorem~\ref{corchar} and Proposition~\ref{cormax}, whereas the linearly independent case is a consequence of Corollary~\ref{burrito}. The interesting examples arise in the middle, when hypercontractivity is achieved despite
having non-trivial analytic and algebraic structures.

The two fundamental examples  $\lambda=(n)$ and $\lambda=(\log n)$ corresponding to the theory of Fourier and Dirichlet series respectively, are both hypercontractive. The first one is essentially due to the fact that 
{functions in Hardy spaces 
on the torus have Fourier series which
are  absolutely convergent on any smaller circle,}
whereas the hypercontractivity of $\lambda=(\log n)$ was proved by Bayart in \cite{Ba2} (see also
 \cite[Theorem~12.9]{ElLibro}). 
From our approach, the first one corresponds to the low density case. For the second one, in \cite[Example~5.8]{Ba} it is shown that the conditions of  Corollary~\ref{burrito} are satisfied. 
The same proof also works for frequencies of natural type $(R, (b_j)_j)$ with $(b_j)_j$ finite or $b_j\to \infty$, a fact originally established in \cite[Theorem~5.16]{DeFVScSP}. 
In Section~\ref{opti} we see that 
Bayart's example of a non-hypercontractive frequency  is of natural type, which shows that in the preceding result the additional assumptions on 
$(b_j)$ are not superfluous.

Let us also mention that, by \cite[Theorem~1.1]{DePe17},
\[
\Lambda_{q,p}(\{\log n \colon  n  \leq  x\}) = e^{\frac{\log x }{\log \log x}\big( \sqrt{\frac{p}{q}}+O
\big(\frac{\log \log \log x }{\log x}\big)  \big)}\,,
\]
which using  Lemma~\ref{theochar} leads to another proof for the hypercontractivity of  $(\log n)$.

The following example seems to be new and is motivated by the Hurwitz zeta function given by
\[
\zeta(s,\alpha)=\sum_{n=0}^\infty (n+\alpha)^{-s} \,,
\]
where say $0<\alpha\le 1$.
\begin{example} For any choice of $\alpha_1,\ldots,\alpha_d>0$, any frequency
\[
\lambda\subseteq \Big\{\log\Big(\sum_{j=1}^{d} \alpha_j m_j\Big) : \ m_1,\ldots,m_d \in \N_0\Big\}\,
\]
is hypercontractive. 

By \cite[Proposition~3]{Ch}, for every $k\in \N$ there is a constant $C>0$ depending on $d$ and $k$ such that for every $r,M>0$ we have
\[
  \#\Big\{(m_{ij})_{\genfrac{}{}{0pt}{}{1\le i \le k}{1\le j\le d}} \in \N_0^{k\times d} \colon  m_{ij}\le M \text{ and} \prod_{i=1}^k \Big(\sum_{j=1}^{d} \alpha_j m_{ij}\Big)=r \Big\}
  \le e^{C \frac{\log M}{\log \log M}}.
\]
Fix some $\mu>0$ and suppose that $\lambda_{n_1}+\ldots +\lambda_{n_k}=\mu$. Note that, for each $1\le i \le k$, we can find $m_{i1}, \ldots , m_{id} \in \mathbb{N}_{0}$ so that
\[
\lambda_{n_i}=\log\Big(\sum_{j=1}^d \alpha_j m_{ij}\Big) \,,
\]
and therefore,
\[
r:=e^\mu= \prod_{i=1}^k \Big(\sum_{j=1}^{d} \alpha_j m_{ij}\Big)\,.
\]
On the other hand, for each $1\le i \le k$ we clearly have  $\lambda_{n_i}\le \mu$. Then, for  every $i,j$ we must have
\[
0\le m_{i,j} \le \frac{e^\mu}{\alpha}=:M \,,
\]
where $\alpha=\min_j \alpha_j$. This altogether yields
\[
\log(\#\{ (n_1,\ldots,n_k) \colon   \lambda_{n_1}+\ldots+\lambda_{n_k} = \mu  \,       \} ) \le C \frac{\mu - \log \alpha}{\log (\mu - \log \alpha)}\,.
\]
By  Corollary~\ref{burrito}, we conclude that $\lambda$ is hypercontractive. \qed
\end{example}

It is worth mentioning that the previous example includes frequencies $\lambda=\log(\Lambda)$ such that $\Lambda$ is contained in a quasicrystal
(see for example \cite{La} for definitions and properties). More precisely, it is sufficient to assume $\Lambda$ is a Delone set  of finite type (which includes Meyer and cut-and-project sets).
For such a set it is easy to check that there is a finite number of possible distances $\alpha_1,\ldots,\alpha_d>0$ between two consecutive points in $\Lambda$, so we are in the conditions of the previous example.

As mentioned before, the first example of a non-hypercontractive frequency was given in
\cite[Theorem~5.2]{Ba}: a frequency $\mu$ such that $S_1(\mu)=L(\mu)$.
This leaves the question of whether there are frequencies $\lambda$  satisfying $S_1(\lambda)=L(\lambda)/2$ that are not hypercontractive. A probabilistic construction by Bourgain \cite[Theorem~2]{Bo} (see also \cite[Theorem~4.8]{rudin1960trigonometric} for the even-integer case) shows that for $2<p<\infty$ there are $\Lambda(p)$-sets that are not $\Lambda(q)$-sets for any $q>p$. This provides a scale that distinguishes sets with low additive structure, which readily translates to our setting and, in particular, sets the concept of hypercontractivity and  the equality $S_1(\lambda)=L(\lambda)/2$ apart.

\begin{example}
For $2<p<\infty$, there exists a frequency $\lambda$, such that  $T_{p,2}(\lambda)=0$, but $T_{q,2}(\lambda)>0$ for every $q>p$. In particular, there is a non-hypercontractive frequency $\lambda$ such that $S_1(\lambda)=L(\lambda)/2$.

Fix $2<p<q<\infty$. From \cite[Theorem~1]{Bo} (see also the proof of \cite[Theorem~2]{Bo}), for each $j \in \mathbb{N}$  we can construct a set $A_j\subseteq \{1,\ldots,2^j-1\}$ such that:
\begin{itemize}
		\item $\# A_j=\lfloor 4^{j/p} \rfloor$;
		\item there is a constant $C=C(p)\ge 1$ such that for every choice of coefficients $(a_n)_{n\in A_j}\subseteq \C$ we have
		\begin{equation} \label{freude}
		\Big(\int_\T \Big|\sum_{n\in A_j}a_n z^n \Big|^p dz \Big)^{1/p}
		\le C \Big( \sum_{n \in A_{j}} \vert a_n \vert^{2} \Big)^{1/2} = C \Big(\int_\T \Big|\sum_{n\in A_j}a_n z^n \Big|^2 dz \Big)^{1/2} \,;
		\end{equation}
		\item there is an absolute constant $c>0$ such that
		\begin{equation} \label{freiheit}
            \begin{split}
                \Big(\int_\T \Big|\sum_{n\in A_j}a_n z^n \Big|^q dz \Big)^{1/q}\ge & c 2^{j(1/p-1/q)}  (\# A_j)^{1/2} \\ &
                = c 2^{j(1/p-1/q)}\Big(\int_\T \Big|\sum_{n\in A_j}a_n z^n \Big|^2 dz \Big)^{1/2}\,.
            \end{split}
		\end{equation}
\end{itemize}
We define now a frequency $\lambda$ as
\[
\lambda = \big\{ j + 2^{-j} n \colon j \in \mathbb{N}, \, n \in A_{j} \big\}
\]
so that, for each $j \in \mathbb{N}$ we have
\[
\lambda \cap [j,j+1) = j+ 2^{-j}A_j\,.
\]

Note now that for each $j$, each choice of coefficients $(a_n)_{n \in A_j}$, and each $1 \leq r < \infty$ we have
\begin{align*}
\Big\Vert \sum_{n \in A_j} a_ne^{-i(j +2^{-j} n) s}\Big\Vert_r
&
=
\lim_{T \to \infty } \Big(\frac{1}{2T} 
\int_{-T}^{T }\big| \sum_{n \in A_j} a_ne^{-i(j +2^{-j} n)t}\big|^rdt\Big)^{1/r}
\\&
=
\lim_{T \to \infty } \Big(\frac{1}{2T}
\int_{-T}^{T}\big| \sum_{n \in A_j} a_n e^{-i2^{-j} n t}\big|^rdt\Big)^{1/r}
\\&
=
\lim_{T \to \infty } \Big(\frac{1}{2T2^{-j}}
\int_{-T 2^{-j}}^{T 2^{-j}}\big| \sum_{n \in A_j} a_n e^{-i n t}\big|^r dt\Big)^{1/r}
=\Big\Vert \sum_{n \in A_j} a_n z^n \Big\Vert_r\,.
\end{align*}

Then, as a straightforward consequence of \eqref{freude} and \eqref{freiheit} 
we get
\[
\Lambda_{p,2}(\lambda \cap [j,j+1)) \le C, \quad \text{and,} \quad \Lambda_{q,2}(\lambda \cap [j,j+1))\ge c 2^{j(1/p-1/q)} \,.
\]
Applying Lemma~\ref{theochar} we have
 \[
T_{p,2}(\lambda) =0, \quad \text{and,} \quad T_{q,2}(\lambda)\ge \Big(\frac{1}{p}-\frac{1}{q}\Big)\log 2 \,.
\]
 In particular, $\lambda$ is not hypercontractive but by Remark~\ref{bilbao-2} (see also Corollary \ref{corlil}) we have $S_1(\lambda)=L(\lambda)/2$. \qed
\end{example}

\section{The proofs} \label{sec-proofs}

We address now the proofs of the results stated in the introduction. We structure this in several blocks (or subsections). In the first one we give some preliminary results about $L(\lambda)$ and the behaviour of $\Lambda_{q,p}$ that will be needed later. The second block focuses on localization results. We prove first our main technical tool (Lemma~\ref{theochar}), and then Proposition~\ref{cormax}. In the third subsection we answer the question of when is a frequency hypercontractive, giving the proofs of Theorem~\ref{corchar} and Corollary~\ref{burrito}. In the fourth block we look at the width of the $\mathcal{H}_{p}$-strips and prove Theorem~\ref{rem} and Proposition~\ref{propext}.   We finish this note by analyzing the optimality of the bounds that we have obtained so far, both for localization (in Proposition~\ref{propshift}) and for strips (in Proposition~\ref{example}).

\subsection{Preliminaries}\label{secprel}
As described in the introduction the Hardy  space $\mathcal{H}_p(\lambda)$ of $\lambda$-Dirichlet series is the completion of the linear space of all
$\lambda$-Dirichlet polynomials under the $p$-norm defined in~\eqref{pnorm}. At a few occasions we need an alternative approach through groups. We give a brief overview of this point of view and refer to \cite{DeSch} for a detailed exposition.

Let $G$ be a compact abelian group and $\beta_G: (\mathbb{R},+)\to  G$
a continuous homomorphism with dense range. Then the pair $(G,\beta_G)$ is said to be a $\lambda$-Dirichlet group if there is a sequence
$(h_{\lambda_n})$ of characters in $\widehat{G}$ (the dual group of $G$) such that $h_{\lambda_n} \circ \beta_G = e^{-\lambda_n \boldsymbol{\cdot}}$ for all $n$.
Then for each $1 \leq p \leq \infty$, the  Banach space $H_p^\lambda(G)$ is defined to be the subspace of all $f \in L_p(G)$
(with respect to the Haar measure on $G$) for which $\supp \widehat{f} \subseteq \{ h_{\lambda_n}\colon n\in \mathbb{N} \}$. In the following we collect a few results from~\cite{DeSch}, which are relevant for our purposes.

\begin{itemize}
  \item
  A $\lambda$-Dirichlet series $D= \sum a_n e^{-\lambda_n s}$ belongs to $\mathcal{H}_p(\lambda)$ if and only if there is (a unique) $f \in H_p^\lambda(G)$ such that
  $a_n = \widehat{f}(h_{\lambda_n})$ for all $n$, and in this case $\|D\|_p = \|f\|_p$. In other terms, the mapping\
  $\mathcal{H}_p(\lambda)\rightarrow H_p^\lambda(G)$ given by $D \mapsto f$
  identifies  both spaces as Banach spaces (we say that $D$ and $f$ are associated to each other).
  \item
  Given a frequency $\lambda$, there always exists a $\lambda$-Dirichlet group $(G,\beta_G)$, and the  Hardy space $H_p^\lambda(G)$ does not depend on the chosen $\lambda$-Dirichlet group.
  \item
  Most frequencies $\lambda$ generate a somewhat natural $\lambda$-Dirichlet group $(G,\beta_G)$. For example, for $\lambda=(n)$ 
  the circle group $\mathbb{T}$ and $\beta_G(t)= e^{-int}$ form a  $\lambda$-Dirichlet group, and for $\lambda=(\log n)$ the infinite dimensional torus $\mathbb{T}^\infty$ combined with the Kronecker flow
  $\beta_G(t)= (p_k^{-it})_k$  create such a group (here $p_k$ stands for the $k$th prime).
  \item
  Let  $(G,\beta_G)$ be a $\lambda$-Dirichlet group and $\sigma >0$. Then the push forward measure 
  under $\beta_G$
  of the Poisson kernel 
  $P_\sigma(t) = \frac{\sigma}{\pi(t^2 + \sigma^2)}$ on $\mathbb{R}$ defines the Poisson measure $p_\sigma$ on $G$, and we have that $\tau_\sigma D$ and $p_\sigma \ast f$ are associated to each other whenever
   $D\in  \mathcal{H}_p(\lambda)$ and $f\in H_p^\lambda(G)$ are. 
   
   Consequently, every result on the   translation operator $\tau_\sigma$ between Hardy spaces of $\lambda$-Dirichlet series transfers into a result on the convolution operator $p_\sigma \ast \boldsymbol{\cdot}$ between Hardy spaces on $\lambda$-Dirichlet groups, and vice versa.
 \end{itemize}

Note that in the definition of $\mathcal{H}_p(\lambda)$ given in the introduction, we avoid the case  $p= \infty$. 
We handle this case now. The Banach space $\mathcal{H}_\infty(\lambda)$ consists of all Dirichlet series 
$D = \sum \widehat{f}(h_{\lambda_n}) e^{-\lambda_n s} $, where $f \in H_\infty^\lambda(G)$ and  $(G,\beta_G)$ is (any) $\lambda$-Dirichlet group.
 Clearly, the norm is given  by $\|D\|_\infty = \|f\|_\infty$. It is important here the fact that  for any Dirichlet polynomial $D = \sum a_n e^{-\lambda_n s} $
\[
\|D\|_\infty = \sup_{t \in \mathbb{R}} \big| \sum a_n e^{-i\lambda_n t} \big|\,.
\]

\vspace{2ex}

As a very first consequence of this group approach to Hardy spaces of Dirichlet series, we immediately see that 
\begin{equation}    \label{eqcoef}
    |a_n|=|\widehat{f}(h_{\lambda_n})|\le \|f\|_p=\|D\|_p.
\end{equation}
for every $D=\sum a_n e^{-\lambda_n s}\in \mathcal{H}_p(\lambda)$ and~$n$.
Even more, given $D=\sum a_n e^{-\lambda_n s}\in \mathcal{H}_p(\lambda)$ we can apply the Hausdorff-Young inequality for  compact abelian groups  to $G$ and $\widehat{G}$ (see e.g. \cite[Theorem~9.5.2]{Garling-book}) to have 
\begin{equation}\label{hy1}
    \Big(\sum |a_n|^{p'}\Big)^{1/p'} \le \|D\|_p\,, \,
    \text{ if } 1\le p\le 2\,,
\end{equation}
    and
\begin{equation}   \label{hy2}
    \|D\|_p \le \Big(\sum |a_n|^{p'}\Big)^{1/p'}\,,\, \text{ if  } 2\le p\le \infty\,.
\end{equation}

By Littlewood's inequality \cite[Theorem~5.5.1]{Garling-book} (also known as Lyapunov's inequality, see \cite[Lemma~II.4.1]{Werner-book})  for $0< p < r \le q\le\infty$ and any 
$f\in L^{p}(\mu)\cap L^{q}(\mu)$,
\begin{equation}\label{Little}
    \|f\|_r\le \|f\|_p^\alpha \|f\|_q^{1-\alpha},
\end{equation}
where
\[
\alpha=\frac{\tfrac{1}{r}-\tfrac{1}{q}}{\tfrac{1}{p}-\tfrac{1}{q}}\,.
\]

From this we get the following Nikolskii-type inequality which is a formal extension of \cite[Theorem~1]{NeWi} (see also \cite[Theorem~2.6]{DeVore-Lorentz-book}), where the result is given for classical trigonometric polynomials on $\mathbb{R}^n$. Although the proof given in \cite{NeWi}  works line by line for our more  general situation,  we prefer to give it for the sake of completeness.

\begin{lemma} \label{N-type}
  Let $G$ be a unimodular  locally compact  group with Haar measure $m$, and  $1 \leq p \leq 2,\, p \leq q \leq  \infty$.
  Then for every trigonometric polynomial
  $f = \sum_{\gamma \in  A} \widehat{f}(\gamma) \gamma$, where $A\subset \widehat{G}$ is finite (here $\widehat{G}$ is the dual group of $G$), we have
  \[
  \|f\|_{q} \leq (\# A)^{\frac{1}{p}-\frac{1}{q}} \|f\|_{p}\,.
  \]
\end{lemma}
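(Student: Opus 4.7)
The plan is to reduce the general case $p \le q \le \infty$ to the extremal case $q=\infty$ via Littlewood's inequality \eqref{Little}, and then handle $q=\infty$ by combining the trivial sup-norm bound with Hausdorff--Young.

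\textbf{Step 1: The endpoint $q=\infty$.} Since $f = \sum_{\gamma \in A} \widehat{f}(\gamma)\gamma$ and each character $\gamma$ has modulus $1$, the triangle inequality gives
\[
\|f\|_{\infty} \le \sum_{\gamma \in A} |\widehat{f}(\gamma)|.
\]
Applying H\"older's inequality with exponents $p'$ and $p$ to the right-hand side,
\[
\sum_{\gamma \in A} |\widehat{f}(\gamma)| \le (\#A)^{1/p}\Big(\sum_{\gamma \in A}|\widehat{f}(\gamma)|^{p'}\Big)^{1/p'},
\]
and then by the Hausdorff--Young inequality \eqref{hy1} (valid for $1 \le p \le 2$ on abelian groups; the argument carries over to the setting of the lemma as recorded in \cite{Garling-book}),
\[
\Big(\sum_{\gamma \in A}|\widehat{f}(\gamma)|^{p'}\Big)^{1/p'} \le \|f\|_p.
\]
Combining the three estimates yields the lemma in the case $q=\infty$:
\[
\|f\|_\infty \le (\#A)^{1/p}\|f\|_p.
\]

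\textbf{Step 2: Interpolation for $p \le q < \infty$.} By Littlewood's inequality \eqref{Little} applied with the triple $(p,q,\infty)$,
\[
\|f\|_q \le \|f\|_p^{\alpha}\,\|f\|_\infty^{1-\alpha}, \qquad \alpha = \frac{1/q}{1/p} = \frac{p}{q}.
\]
Plugging in the endpoint bound from Step~1,
\[
\|f\|_q \le \|f\|_p^{p/q}\Big((\#A)^{1/p}\|f\|_p\Big)^{1-p/q} = (\#A)^{(1/p)(1-p/q)}\|f\|_p = (\#A)^{1/p-1/q}\|f\|_p,
\]
which is the claimed inequality.

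\textbf{Main obstacle.} The proof is essentially a two-step interpolation and presents no serious conceptual difficulty; the only point to verify carefully is that Hausdorff--Young is available in the form \eqref{hy1} for the (unimodular, locally compact, abelian) group under consideration. Since $f$ has finite spectrum, one may in practice pass to the closed subgroup generated by $A$ (or use the Pontryagin dual formulation directly, as in \cite[Theorem~9.5.2]{Garling-book}), so this is harmless.
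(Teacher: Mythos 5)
Your proof is correct and follows essentially the same route as the paper: both reduce to the endpoint bound $\|f\|_\infty \le (\#A)^{1/p}\|f\|_p$ and then conclude by Littlewood's inequality \eqref{Little} with the triple $(p,q,\infty)$. The only (minor) difference is how the endpoint is obtained: you use the triangle inequality on the Fourier expansion, H\"older, and Hausdorff--Young applied to $f$, whereas the paper writes $f = h\ast f$ with $h=\sum_{\gamma\in A}\gamma$ and applies Young's convolution inequality together with Hausdorff--Young to the kernel $h$; the two derivations are interchangeable one-liners.
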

\begin{proof}
Define $h = \sum_{\gamma \in A} \gamma$. By Young's  convolution inequality and the Hausdorff-Young inequality for groups we have
  \[
   \|f\|_\infty = \|h\ast f\|_\infty \leq \|h\|_{p'}\|f\|_p   \leq  (\# A)^{\frac{1}{p}} \|f\|_p\,.
  \]
 Consequently, by Littlewood's inequality \eqref{Little} for $1 \leq p \leq 2, p \leq q \leq  \infty$
  \[
  \|f\|_q \leq \|f\|_p^{\frac{p}{q}} \|f\|_\infty^{1-\frac{p}{q}}
  \leq
  \|f\|_p^{\frac{p}{q}}
    (\# A)^{\frac{1}{p}-\frac{1}{q}} \|f\|_p^{1-\frac{p}{q}}\,. \qedhere
    \]
\end{proof}

We remark that the group approach to Hardy spaces series readily gives reformulations of \eqref{Little} or Lemma \ref{N-type} in terms of Dirichlet series. Littlewood's inequality also allows us to compare $\Lambda_{q,p}$ constants. The following lemma is essentially \cite[Theorem~1.4]{rudin1960trigonometric}. We include a proof because we need a slight generalization.
\begin{lemma}\label{lelil}
    For a finite set $A\subseteq \R$, $0< p_1\le p_2 < q_2\le \infty$ and $p_1\le q_1 \le q_2$, we have
    \[
    \Lambda_{q_1,p_1}(A)\le \Lambda_{q_2,p_2}(A)^{\alpha}\,,
    \]
    where
    \[
    \alpha=\frac{\tfrac{1}{p_1}-\tfrac{1}{q_1}}{\tfrac{1}{p_2}-\tfrac{1}{q_2}}\,.
    \]
\end{lemma}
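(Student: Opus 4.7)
My plan is to prove the lemma by a two-step interpolation using Littlewood's inequality \eqref{Little}, with the defining inequality for $\Lambda_{q_2,p_2}(A)$ serving as a bridge between the two steps. The case $p_1=q_1$ is immediate, since then $\Lambda_{q_1,p_1}(A)=1$ and $\alpha=0$, so I will assume $p_1<q_1$.

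For the main case, I fix an arbitrary $\lambda$-Dirichlet polynomial $D$ with frequencies in $A$. First I will apply Littlewood's inequality to the triple $(p_1,p_2,q_2)$, which is legitimate since $p_1\le p_2<q_2$, obtaining a bound for $\|D\|_{p_2}$ in terms of $\|D\|_{p_1}$ and $\|D\|_{q_2}$ with exponent $\gamma_1=\frac{1/p_2-1/q_2}{1/p_1-1/q_2}$ (the degenerate case $p_1=p_2$ gives $\gamma_1=1$ trivially). Plugging this into the inequality $\|D\|_{q_2}\le \Lambda_{q_2,p_2}(A)\|D\|_{p_2}$ and solving for $\|D\|_{q_2}$ eliminates the $\|D\|_{p_2}$ term and yields
\[
\|D\|_{q_2}\le \Lambda_{q_2,p_2}(A)^{1/\gamma_1}\|D\|_{p_1}.
\]

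Next, I will apply Littlewood's inequality a second time to the triple $(p_1,q_1,q_2)$, using $p_1<q_1\le q_2$, to bound $\|D\|_{q_1}\le \|D\|_{p_1}^{\gamma_2}\|D\|_{q_2}^{1-\gamma_2}$ with $\gamma_2=\frac{1/q_1-1/q_2}{1/p_1-1/q_2}$ (the degenerate case $q_1=q_2$, including $q_1=q_2=\infty$, corresponds to $\gamma_2=0$ and the step is trivial). Substituting the displayed bound for $\|D\|_{q_2}$ gives $\|D\|_{q_1}\le \Lambda_{q_2,p_2}(A)^{(1-\gamma_2)/\gamma_1}\|D\|_{p_1}$, and the proof then reduces to a short algebraic identity: the common factor $1/p_1-1/q_2$ in $1-\gamma_2$ and $\gamma_1$ cancels, and the remaining numerator simplifies to $1/p_1-1/q_1$, yielding precisely $(1-\gamma_2)/\gamma_1=\alpha$. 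Taking the infimum over Dirichlet polynomials gives the claim.

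The main (and really only) obstacle is keeping the bookkeeping of the exponents straight across the two Lyapunov interpolations; no new idea beyond Littlewood's inequality and the definition of $\Lambda_{q_2,p_2}(A)$ enters. Conceptually, the lemma is a concatenation of two interpolations with $\Lambda_{q_2,p_2}(A)$ used to convert a $\|D\|_{p_2}$-estimate into a $\|D\|_{p_1}$-estimate in the middle, and the exponent $\alpha$ is precisely the ratio forced by cancelling the common interpolation denominator.
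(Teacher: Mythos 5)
Your proposal is correct and follows essentially the same route as the paper: two applications of Littlewood's inequality (to the triples $(p_1,q_1,q_2)$ and $(p_1,p_2,q_2)$) bridged by the defining inequality for $\Lambda_{q_2,p_2}(A)$, with the same exponent cancellation $(1-\gamma_2)/\gamma_1=\alpha$. The only difference is cosmetic—you absorb the $\Lambda$-inequality to bound $\|D\|_{q_2}$ by $\|D\|_{p_1}$ first, while the paper instead rearranges to bound $\|D\|_{p_2}$ by $\|D\|_{p_1}$—and your explicit handling of the degenerate cases is a harmless addition.
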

\begin{proof}
    Let $D$ be a Dirichlet polynomial with frequencies in $A$. By Littlewood's inequality  \eqref{Little},
    \[
    \|D\|_{q_1}\le \|D\|_{p_1}^{\alpha_1} \|D\|_{q_2}^{1-\alpha_1}\le \|D\|_{p_1}^{\alpha_1} \Lambda_{q_2,p_2}(A)^{1-\alpha_1} \|D\|_{p_2}^{1-\alpha_1}\,,
    \]
    where
    \[
    \alpha_1=\frac{\tfrac{1}{q_1}-\tfrac{1}{q_2}}{\tfrac{1}{p_1}-\tfrac{1}{q_2}}\,.
    \]
    Similarly,
    \[
    \|D\|_{p_2}\le \|D\|_{p_1}^{\alpha_2} \|D\|_{q_2}^{1-\alpha_2}\le \|D\|_{p_1}^{\alpha_2} \Lambda_{q_2,p_2}(A)^{1-\alpha_2} \|D\|_{p_2}^{1-\alpha_2}\,,
    \]
    where
    \[
    \alpha_2=\frac{\tfrac{1}{p_2}-\tfrac{1}{q_2}}{\tfrac{1}{p_1}-\tfrac{1}{q_2}}\,.
    \]
    Rearranging, we get
    \[
    \|D\|_{p_2}\le \Lambda_{q_2,p_2}(A)^{1/\alpha_2-1}  \|D\|_{p_1} \,.
    \]
    So,
    \[
    \|D\|_{q_1}\le  \|D\|_{p_1} \Lambda_{q_2,p_2}(A)^{1-\alpha_1} \Lambda_{q_2,p_2}(A)^{(1/\alpha_2-1)(1-\alpha_1)}= \Lambda_{q_2,p_2}(A)^{(1-\alpha_1)/\alpha_2}\|D\|_{p_1}\,.
    \]
    Expanding the exponent $(1-\alpha_1)/\alpha_2$ we arrive at the result.
\end{proof}

Next, we recall Helson's bound for the partial sum operator. For each $N \in \mathbb{N}$, we write $\pi_{N}$ for the $N$-th partial sum operator given by
\[
\pi_{N} \big( \sum_{n=1}^{\infty} a_{n} e^{-\lambda_{n} s} \big) =  \sum_{n=1}^{N} a_{n} e^{-\lambda_{n} s} \,.
\]
Using the group approach that we presented in Section~\ref{secprel} to define the spaces $\hh_p(\lambda)$ for $p \geq 1$ we may as well define such spaces for $0<p < 1$. With this, in \cite{He} (see also \cite[Theorem~8.7.6]{Ru}), the following is shown.

\begin{lemma}\label{parsum}
There exists a constant $C\ge 1$ such that for every frequency $\lambda$, and all $0 < \varepsilon <1$ 
and $N$ we have
\[
\|\pi_N\|_{\hh_{1-\varepsilon}(\lambda)\to\hh_1(\lambda)}\le \frac{C}{\varepsilon}\,.
\]
\end{lemma}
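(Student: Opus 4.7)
The plan is to transfer Helson's classical argument on $H^p(\mathbb{T})$ for $0<p<1$ \cite{He} to the $\lambda$-Dirichlet group setting, following the exposition of \cite[Theorem~8.7.6]{Ru}. I work throughout with the group picture from Section~\ref{secprel}: fix any $\lambda$-Dirichlet group $(G,\beta_G)$ and identify $\hh_p(\lambda)\cong H_p^\lambda(G)$, with the identification extended to $0<p<1$ by setting $H_p^\lambda(G):=\{f\in L^p(G):\supp\widehat f\subseteq\{h_{\lambda_n}\}\}$. Under this correspondence $\pi_N$ becomes the spectral projection onto $\mathrm{span}\{h_{\lambda_1},\ldots,h_{\lambda_N}\}\subset\widehat G$, so it suffices to prove
\[
\|\pi_N f\|_{L^1(G)}\le \frac{C}{\varepsilon}\|f\|_{L^{1-\varepsilon}(G)},\qquad f\in H_{1-\varepsilon}^\lambda(G),
\]
with $C$ universal (independent of $N$, $\varepsilon$, $\lambda$, and of the particular $\lambda$-Dirichlet group used).

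After reducing by density to the case that $f$ is a Dirichlet polynomial, the first key step is a Riesz-type outer factorization of $f$: write $f=u\cdot v$ with $u,v\in H_{2(1-\varepsilon)}^\lambda(G)$ and $\|u\|_{2(1-\varepsilon)}\|v\|_{2(1-\varepsilon)}\le \|f\|_{1-\varepsilon}$. On the torus this follows by splitting $f=B\cdot f_{\mathrm{out}}$ into its (finite) Blaschke and outer parts and then taking the square root of the outer factor. The construction carries over to $\lambda$-Dirichlet groups since $\{h_{\lambda_n}\}$ sits inside an ordered subsemigroup of $\widehat G$, and the relevant outer-function theory on compact groups with ordered dual is the content of \cite[Chapter~8]{Ru}.

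With the factorization in hand, I decompose $\pi_N$ via the half-line Riesz projection $P^+$ onto the non-negative characters of the ordered dual. Since $P^+f=f$ for $f\in H_{1-\varepsilon}^\lambda(G)$, a direct computation gives
\[
\pi_N f=f-h_{\lambda_{N+1}}\cdot P^+\bigl(\overline{h_{\lambda_{N+1}}}\cdot f\bigr),
\]
so it is enough to bound the $L^1$-norm of the second term. Inserting $f=uv$ and expanding $P^+(\overline{h_{\lambda_{N+1}}}u\cdot v)$ into its paraproduct pieces (the analytic piece, plus a commutator controlled by $P^+$ acting on a single factor), I pair against a test function $\phi\in L^\infty(G)$ with $\|\phi\|_\infty\le 1$ and apply H\"older together with the M.~Riesz theorem on ordered groups \cite[Theorem~8.7.2]{Ru}, which yields $\|P^+\|_{L^q(G)\to L^q(G)}\lesssim 1/(q-1)$ as $q\to 1^+$. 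Choosing the auxiliary index $q$ so that $q-1\asymp \varepsilon$, calibrated to the factorization exponent $2(1-\varepsilon)$, produces exactly the factor $C/\varepsilon$.

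The main obstacle is the execution of this last paragraph with a sharp constant: one must balance the H\"older indices against the $L^q$-operator norm of $P^+$ in a way that does not waste the asymptotic $1/(q-1)$ bound, and one must verify the outer-function factorization in the generality of an arbitrary $\lambda$-Dirichlet group. Both ingredients are available uniformly (no structural hypothesis on $\lambda$ is used beyond what is built into the definition of a $\lambda$-Dirichlet group), which is what allows the constant $C$ in the statement to be universal.
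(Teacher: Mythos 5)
The inequality you set out to prove, $\|\pi_N f\|_{\hh_1}\le \frac{C}{\varepsilon}\|f\|_{\hh_{1-\varepsilon}}$, is false, already for $\lambda=(n)$, i.e.\ on the circle. Take $f=D_N=\sum_{k=0}^{N}z^k$: then $\pi_N f=f$, $\|D_N\|_{L^1(\T)}\asymp \log N$, while $\|D_N\|_{L^{1/2}(\T)}\asymp 1$ uniformly in $N$, so $\|\pi_N\|_{H^{1/2}(\T)\to H^1(\T)}\gtrsim \log N$ is unbounded. (Equivalently, uniform boundedness of $\pi_N:\hh_{1-\varepsilon}\to\hh_1$ would force $\sup_k|\widehat f(k)|\le C_\varepsilon\|f\|_{H^{1-\varepsilon}}$, contradicting the sharpness of the Hardy--Littlewood coefficient bound $|\widehat f(k)|\lesssim k^{\varepsilon/(1-\varepsilon)}\|f\|_{H^{1-\varepsilon}}$.) What Helson proves, and what the paper actually uses in the proof of Lemma~\ref{theochar} (the step bounding $\|\sum_{\lambda_n\in[j,j+1)}a_n e^{-\lambda_n s}\|_{1-\varepsilon}$ by $C_\varepsilon\|D\|_1$), is the reverse estimate $\|\pi_N f\|_{\hh_{1-\varepsilon}}\le \frac{C}{\varepsilon}\|f\|_{\hh_1}$; the subscript in the operator norm must be read with $\hh_1$ as the source space. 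So the target of your argument is the wrong (and false) direction.

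Independently of this, the argument breaks internally. After $\pi_N f=f-h_{\lambda_{N+1}}P^+\bigl(\overline{h_{\lambda_{N+1}}}f\bigr)$ you assert it suffices to bound the second term in $L^1$; but you would also need $\|f\|_{L^1}\lesssim\|f\|_{L^{1-\varepsilon}}$, which fails, and since $\pi_N f$ is the difference, the second term is exactly as uncontrollable as $f$ itself (the ``analytic piece'' $P^+(\overline{h_{\lambda_{N+1}}}u)\cdot v$ of your paraproduct expansion is precisely the part that must cancel against $f$). The final H\"older step cannot close either: with $u,v\in L^{2(1-\varepsilon)}$ the product lies only in $L^{1-\varepsilon}$, and $\tfrac{1}{2(1-\varepsilon)}+\tfrac{1}{2(1-\varepsilon)}>1$ leaves no room to pair against $P^{\pm}\phi\in L^q$ for any $q<\infty$. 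The Riesz factorization is also not ``the content of Rudin, Ch.~8'' in this generality: inner--outer factorization fails on compact groups with ordered dual (e.g.\ on $\T^\infty$), and for polynomials one must first reduce to $\T$ by an order-preserving embedding of the finitely generated frequency group. In the correct direction none of this is needed: your identity, the trivial bound $\|f\|_{1-\varepsilon}\le\|f\|_1$, subadditivity of $\|\cdot\|_{1-\varepsilon}^{1-\varepsilon}$, and Kolmogorov's weak type $(1,1)$ inequality for $P^+$ on a group with ordered dual (the actual content of Helson's paper and of the cited result in Rudin) give
\[
\|\pi_N f\|_{1-\varepsilon}^{1-\varepsilon}\le \|f\|_{1}^{1-\varepsilon}+\bigl(\tfrac{C}{\varepsilon}\bigr)^{1-\varepsilon}\|f\|_1^{1-\varepsilon}\,.
\]
Note finally that the paper offers no proof of this lemma, only the citation, so there is no internal argument to compare against.
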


Finally, we need another characterization of $L(\lambda)$ as a sort of upper asymptotic density.
\begin{lemma}
	\label{lella}
	For every frequency $\lambda$, we have
	\[
	L(\lambda)=\limsup_{j\to\infty} \frac{\log \#(\lambda\cap [j,j+1))}{j}\,.
	\]
\end{lemma}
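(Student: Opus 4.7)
The plan is to translate both sides into statements about the counting function $N(x) := \#\{n : \lambda_n \le x\}$ and then compare. First I would observe that $L(\lambda) = \limsup_{x \to \infty} \log N(x)/x$: since $N$ is constant equal to $n$ on each interval $[\lambda_n, \lambda_{n+1})$, the map $x \mapsto \log N(x)/x$ is decreasing on this interval and attains its supremum $\log n/\lambda_n$ at the left endpoint. Consequently the continuous $\limsup$ equals $\limsup_n \log n/\lambda_n = L(\lambda)$, and by sandwiching $\log N(x)/x$ between $\log N(j)/(j+1)$ and $\log N(j+1)/j$ for $x \in [j, j+1)$, the same value equals $\limsup_{j \to \infty} \log N(j+1)/(j+1)$.

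Writing $a_j := \#(\lambda \cap [j, j+1))$ we have the identity $N(j+1) = \sum_{k=0}^{j} a_k$, so the task reduces to showing that $\limsup_j \log N(j+1)/(j+1) = \limsup_j \log a_j/j$. Denoting the latter quantity by $\beta$, the inequality $\beta \le L(\lambda)$ is immediate from $a_j \le N(j+1)$. For the reverse inequality I would argue as follows: since $\lambda_n \to \infty$ forces infinitely many $a_j$ to be positive (otherwise $N$ would be eventually constant, contradicting the infinitude of $\lambda$), one has $\beta \ge 0$. Fix $\varepsilon > 0$ and choose $j_0$ so that $a_j \le e^{j(\beta + \varepsilon)}$ for $j \ge j_0$; summing a geometric series gives $N(j+1) \le N(j_0) + C_\varepsilon\, e^{(j+1)(\beta + \varepsilon)}$, whence $\limsup_j \log N(j+1)/(j+1) \le \beta + \varepsilon$. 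Letting $\varepsilon \to 0$ yields $L(\lambda) \le \beta$.

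The single place where one must be slightly careful is the boundary case $\beta = 0$: the geometric-series ratio $e^{\varepsilon}$ is then close to $1$, but one still obtains $N(j+1) = O(j\, e^{j\varepsilon})$, which suffices to conclude $\limsup_j \log N(j+1)/(j+1) \le \varepsilon$. Modulo this routine bookkeeping, no further obstacle is anticipated and the equality follows.
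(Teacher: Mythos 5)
Your proof is correct, but it follows a different route from the paper's. You start from $L(\lambda)=\limsup_n \tfrac{\log n}{\lambda_n}$ and work with the counting function $N(x)=\#\{n:\lambda_n\le x\}$, identifying both sides of the lemma with $\limsup_{x\to\infty}\tfrac{\log N(x)}{x}$; the key step is the elementary comparison between the growth rate of the nonnegative terms $a_j=\#(\lambda\cap[j,j+1))$ and that of their partial sums. The paper instead uses the characterization $L(\lambda)=\sigma_c\big(\sum_n e^{-\lambda_n s}\big)$: sandwiching $\sum_n e^{-\sigma\lambda_n}$ between $e^{-\sigma}\sum_j a_j e^{-\sigma j}$ and $\sum_j a_j e^{-\sigma j}$ shows the two series have the same abscissa of convergence, and the root test applied to the binned series identifies that abscissa as $\limsup_j\tfrac{\log a_j}{j}$. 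The paper's argument is shorter; yours is more self-contained (no appeal to convergence of series) and makes explicit that $\limsup_j\tfrac{\log a_j}{j}\ge 0$. Three harmless bookkeeping points: $N(j+1)=\sum_{k=0}^{j}a_k$ can be off by one when some $\lambda_n$ equals the integer $j+1$ (use $\sum_{k=0}^{j}a_k\le N(j+1)\le \sum_{k=0}^{j}a_k+1$); choosing $j_0$ with $a_j\le e^{j(\beta+\varepsilon)}$ presupposes $\beta<\infty$, the case $\beta=\infty$ being vacuous; and the special care for $\beta=0$ is unnecessary, since with $\varepsilon>0$ fixed the geometric ratio $e^{\beta+\varepsilon}>1$ and the same bound applies with a larger constant $C_\varepsilon$.
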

\begin{proof}
	Notice that for every $\sigma\in \R$, the following holds
	\[
	e^{-\sigma} \sum_{j\ge 0} \#(\lambda\cap [j,j+1)) e^{-\sigma j}
	\le \sum_{n\ge 1}  e^{-\sigma \lambda_n} \le \sum_{j\ge 0} \#(\lambda\cap [j,j+1)) e^{-\sigma j}\,.
	\]
	So, the values $\sigma$ ensuring convergence coincide for both series. In particular, the critical value $L(\lambda)$ for the convergence of the middle series can be computed by the $n$-th root convergence test applied to the other series:
	\[
	\limsup_{j\to\infty} \Big(\#(\lambda\cap [j,j+1)) e^{-L(\lambda) j}\Big)^{1/j}=1\,.
	\]
	Taking $\log$ and rearranging the expression completes the proof.	
\end{proof}

\subsection{Localization}

In this section we prove the localization result Lemma~\ref{theochar},  and Proposition~\ref{cormax}, that gives conditions on $\sigma$ so that $\tau_{\sigma} : \mathcal{H}_{p} \to \mathcal{H}_{q}$ is well defined. To begin with, we need the following lemma.

\begin{lemma}\label{monotone2}
	  Let $1\le p < \infty$. For every frequency $\lambda$, every Dirichlet polynomial $D=\sum_{n=N}^{M} a_n e^{-\lambda_n s}$ and every $\sigma>0$, we have
	\begin{equation}\label{eqequi2}
		e^{-\lambda_M \sigma} \|D\|_p\le \|D_\sigma\|_p \le  e^{-\lambda_N \sigma}\|D\|_p.
	\end{equation}
\end{lemma}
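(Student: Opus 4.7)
The plan is to deduce both inequalities from three simple facts: (i) $\tau_\sigma$ is a contraction on $\hh_p(\mu)$ for every frequency $\mu$ (stated just before \eqref{gallardo} via \cite[Theorem~4.7]{DeSch}); (ii) multiplying a Dirichlet polynomial by $e^{-cs}$ with $c\in\R$ preserves the $p$-norm, because $|e^{-ict}|=1$; and (iii) the \emph{reversed} polynomial $\widetilde E:=\sum_{n=N}^M b_n e^{-(\lambda_M-\lambda_n)s}$ associated to $E=\sum_{n=N}^M b_n e^{-\lambda_n s}$ satisfies $\|\widetilde E\|_p=\|E\|_p$, since $\widetilde E(it)=e^{-i\lambda_M t}\,E(-it)$ and the mean in \eqref{pnorm} is invariant under $t\mapsto -t$. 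After the reindexing $k=M+N-n$, the frequencies $\lambda_M-\lambda_n$ are non-negative and strictly increasing, so $\widetilde E$ legitimately belongs to a Hardy space on which (i) applies.

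For the upper bound I would factor $D=e^{-\lambda_N s}\,D'$ with $D':=\sum_{n=N}^M a_n e^{-(\lambda_n-\lambda_N)s}$, whose frequencies $\lambda_n-\lambda_N$ form a frequency $\lambda'$ starting at $0$. By (ii), $\|D\|_p=\|D'\|_p$, and a direct computation gives $D_\sigma = e^{-\lambda_N\sigma}\,e^{-\lambda_N s}\,D'_\sigma$, hence $\|D_\sigma\|_p = e^{-\lambda_N\sigma}\|D'_\sigma\|_p$, again by (ii). The contraction property (i) on $\hh_p(\lambda')$ yields $\|D'_\sigma\|_p\le\|D'\|_p=\|D\|_p$, and the right-hand inequality follows.

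For the lower bound I would apply the reversal trick to $D_\sigma$: set $\widetilde E:=\sum_{n=N}^M (a_n e^{-\lambda_n\sigma})\, e^{-(\lambda_M-\lambda_n)s}$. By (iii), $\|\widetilde E\|_p=\|D_\sigma\|_p$, and a quick calculation shows
\[
\widetilde E_\sigma = \sum_{n=N}^{M} a_n e^{-\lambda_n \sigma - (\lambda_M-\lambda_n)\sigma}\, e^{-(\lambda_M-\lambda_n)s} = e^{-\lambda_M\sigma}\, \widetilde D,
\]
where $\widetilde D:=\sum_{n=N}^M a_n e^{-(\lambda_M-\lambda_n)s}$ also satisfies $\|\widetilde D\|_p=\|D\|_p$ by (iii). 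Applying (i) on the Hardy space over the reversed frequency then gives $e^{-\lambda_M\sigma}\|D\|_p=\|\widetilde E_\sigma\|_p \le \|\widetilde E\|_p = \|D_\sigma\|_p$, which is the left-hand inequality.

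No real obstacle is expected: the argument is entirely formal once (i)--(iii) are in hand, and the only minor bookkeeping is to check that the shifted frequency $\lambda'=(\lambda_n-\lambda_N)_{n\ge N}$ and the reversed-and-reindexed frequency arising from $\widetilde E,\widetilde D$ are genuine frequencies, so that $\tau_\sigma$ is indeed a contraction on the corresponding Hardy spaces.
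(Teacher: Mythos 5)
Your proposal is correct and follows essentially the same route as the paper: for the upper bound you shift the frequencies by $\lambda_N$ and invoke the contractivity of $\tau_\sigma$ on the shifted frequency, and for the lower bound you use the reversed frequencies $\lambda_M-\lambda_n$ (your $\widetilde E$ is exactly the auxiliary polynomial in the paper's argument) together with the same contraction. The only difference is presentational: you isolate the norm-invariance under modulation and reversal as explicit facts, which the paper uses implicitly via \eqref{pnorm}.
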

\begin{proof}
Consider the frequency $\widetilde{\lambda}=\{\widetilde{\lambda}_n\}_{n=N}^M$ given by
\[
\widetilde{\lambda}_n=\lambda_n-\lambda_N\,,
\]
and define 
\[
\widetilde{D}=\sum_{n=N}^{M} a_n e^{-\widetilde{\lambda}_n s}\,.
\]
From \eqref{pnorm} and the fact that $\tau_\sigma$ is a contraction in $\hh_p(\widetilde{\lambda})$ we see that
\[
\|D_\sigma\|_p=e^{-\lambda_N \sigma}\|\widetilde{D}_\sigma\|_p
\le e^{-\lambda_N \sigma}\|\widetilde{D}\|_p=e^{-\lambda_N \sigma}\|D\|_p\,.
\]
It remains to check the left-hand side of \eqref{eqequi2}. Similarly as before, consider $\widetilde{\lambda}=\{\widetilde{\lambda}_n\}_{n=N}^M$ given by
\[
\widetilde{\lambda}_n=\lambda_M-\lambda_n\,,
\]
and define 
\[
\widetilde{D}=\sum_{n=N}^{M} a_n e^{-\lambda_n \sigma} e^{-\widetilde{\lambda}_n s}\,.
\]
From \eqref{pnorm} and the fact that $\tau_\sigma$ is a contraction in $\hh_p(\widetilde{\lambda})$ we conclude that
\[
\|D\|_p=e^{\lambda_M \sigma}\|\widetilde{D}_\sigma\|_p
\le e^{\lambda_M \sigma}\|\widetilde{D}\|_p=e^{\lambda_M \sigma}\|D_\sigma\|_p\,. \qedhere
\]
\end{proof}

We are now ready to prove our main tool Lemma~\ref{theochar}.

\begin{proof}[Proof of Lemma~\ref{theochar}] 
Given $1\le p < q\le \infty$,
	first assume that $\tau_\sigma$ maps $\hh_p(\lambda)$ into $\hh_q(\lambda)$. Fix  $j\ge 0$ and a Dirichlet polynomial $D=\sum_{n: \lambda_{n}\in [j,j+1) } a_n e^{-\lambda_{n} s}$. Notice that
	\begin{align*}
		\|D\|_q & = \|\tau_\sigma(D_{-\sigma})\|_q
		\le  \|\tau_\sigma\| \|D_{-\sigma}\|_p
		\le  \|\tau_\sigma\| \|D\|_{p} e^{(j+1)\sigma},
	\end{align*}
	where in the last step we used Lemma~\ref{monotone2}.\\
So (recall \eqref{brandenburgo} for the definition of $\Lambda_{q,p})$,
	\[
		\limsup_{j\to\infty}\frac{\log \Lambda_{q,p}(\lambda\cap [j,j+1))}{j} \le \limsup_{j\to\infty}\frac{\log(  \|\tau_\sigma\| e^{(j+1)\sigma})}{j} = \sigma.
	\]
Therefore,
\[
\limsup_{j\to\infty} \frac{\log \Lambda_{q,p}(\lambda\cap [j,j+1))}{j} \le  T_{q,p}(\lambda) \,.
\]

For the reverse inequality, write
\[
\sigma= \limsup_{j\to\infty} \frac{\log \Lambda_{q,p}(\lambda\cap [j,j+1))}{j}\,.
\]
For $\varepsilon>0$, there exists $j_0\in \N$ such that 
\begin{equation}\label{eqexp}
	\Lambda_{q,p}(\lambda\cap [j,j+1))\le e^{j(\sigma+\varepsilon)}
\end{equation}
for every $j\ge j_0$. Now, for every such $j$ Lemma~\ref{lelil} yields
\begin{equation}\label{eqeps}
	\Lambda_{p,p-\varepsilon}(\lambda\cap [j,j+1))\le \Lambda_{q,p}(\lambda\cap [j,j+1))^{\alpha} \le e^{j(\sigma+\varepsilon)\alpha},
\end{equation}
where
\[
\alpha=\frac{\tfrac{1}{p-\varepsilon}-\tfrac{1}{p}}{\tfrac{1}{p}-\tfrac{1}{q}}\,.
\]
For $p>1$ there is no need to work with $p-\varepsilon$ nor \eqref{eqeps}. We do it to simultaneously cover the case $p=1$. 

Let $D=\sum a_n e^{-\lambda_{n} s}\in \hh_p(\lambda)$. By the triangle inequality, observe that for $\tilde{\sigma}=(\sigma+2\varepsilon)(1+\alpha)$,
\begin{equation}\label{eqtri}
	\|\tau_{\tilde{\sigma}} D\|_q \le  \sum_{n: \lambda_{n}< j_0} \Vert a_n e^{-\lambda_{n} s} \Vert_{q} +  \sum_{j\ge j_0} \|\sum_{n: \lambda_{n}\in [j,j+1) } a_n e^{-\lambda_{n} \tilde{\sigma}} e^{-\lambda_{n} s}\|_q.
\end{equation}
Using \eqref{eqcoef}, the first term on the right-hand side can be bounded by (recall that $\Vert a_{n} e^{-\lambda_{n} s} \Vert_{q} = \vert a_{n} \vert$)
\begin{equation}\label{eqbrut}
	\sum_{n: \lambda_{n}< j_0}|a_n| \le \#(\lambda\cap [0,j_0)) \|D\|_p.
\end{equation}
Regarding the second term, from \eqref{eqexp}, Lemma~\ref{monotone2} and \eqref{eqeps} we get
\begin{align*}
	\sum_{j\ge j_0} \|\sum_{n: \lambda_{n}\in [j,j+1) }& a_n e^{-\lambda_{n} \tilde{\sigma}} e^{-\lambda_{n} s}\|_q
	\\&\le \sum_{j\ge j_0} \Lambda_{q,p}(\lambda\cap [j,j+1)) \|\sum_{n: \lambda_{n}\in [j,j+1) } a_n e^{-\lambda_{n} \tilde{\sigma}} e^{-\lambda_{n} s}\|_{p}
	\\&\le  \sum_{j\ge j_0} e^{j(\sigma+\varepsilon)} e^{-j \tilde{\sigma}} \|\sum_{n: \lambda_{n}\in [j,j+1) } a_n  e^{-\lambda_{n} s}\|_{p}
	\\&\le  \sum_{j\ge j_0} e^{j(\sigma+\varepsilon)(1+\alpha)} e^{-j \tilde{\sigma}} \|\sum_{n: \lambda_{n}\in [j,j+1) } a_n  e^{-\lambda_{n} s}\|_{p-\varepsilon}
	\\&\le C_{p,\varepsilon} \|D\|_p \sum_{j\ge j_0} e^{-j\varepsilon(1+\alpha)}\,,
\end{align*}
where in the last step we used Lemma~\ref{parsum} if $p=1$, whereas for $p>1$ we use that the monomials form a Schauder basis (see \cite[Theorem~4.16]{DeSch}). As mentioned before, notice that the use of \eqref{eqeps} is redundant whenever $p>1$ since we can estimate the $p$-norms directly.

Joining this with \eqref{eqtri} and \eqref{eqbrut} we deduce that $\tau_{\tilde\sigma}:\hh_p\to\hh_q$ is bounded. Since $\varepsilon$ was arbitrary and $\alpha \to 0$ as $\varepsilon \to 0$,we conclude that
\[
T_{q,p}(\lambda)\le \limsup_{j\to\infty} \frac{\log \Lambda_{q,p}(\lambda\cap [j,j+1))}{j}.\qedhere
\]
\end{proof}

\begin{remark}
	The partition $[j,j+1)$ in Lemma~\ref{theochar} can be replaced by $\delta[j,j+1)+x$ for fixed $\delta,x$. Moreover, one can even take $[r_j,r_{j+1})$ and replace the denominator $j$ with $r_{j}$, as long as
\[
\lim_{j\to \infty}\frac{r_{j+1}}{r_j}=1, \, \text{ and } \, \lim_{j\to \infty}\frac{r_j}{\log j}=\infty \,.
\]
\end{remark}

As a direct consequence of localization and Lemma~\ref{lelil} we get the following result.
\begin{corollary}\label{corlil}
For every frequency $\lambda$ and $0<p_1\le \, p_2,q_1\, \le q_2\le \infty$, we have
    \[
    \big(\frac{1}{p_2}-\frac{1}{q_2} \big) T_{q_1,p_1}(\lambda) \le \big( \frac{1}{p_1}-\frac{1}{q_1} \big)T_{q_2,p_2}(\lambda) \,.
    \]
\end{corollary}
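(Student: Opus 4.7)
The plan is a direct combination of the two preceding lemmas. Lemma~\ref{theochar} rewrites each $T_{q,p}(\lambda)$ as a $\limsup$ of $\log\Lambda_{q,p}(\lambda\cap [j,j+1))/j$, while Lemma~\ref{lelil} gives a pointwise inequality between $\Lambda$-constants for the same set with different exponents. Taking logarithms in Lemma~\ref{lelil} turns its multiplicative exponent $\alpha$ into precisely the ratio appearing in the statement, so the inequality will drop out after dividing by $j$ and letting $j\to\infty$.

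More concretely, I would first dispose of the degenerate cases: if $p_1=q_1$, then $\Lambda_{q_1,p_1}(A)=1$ for every $A$, hence $T_{q_1,p_1}(\lambda)=0$ by Lemma~\ref{theochar} and the inequality is trivial; similarly, if $p_2=q_2$, the left-hand side is zero while the right-hand side is nonnegative (using that $T_{q_1,p_1}(\lambda)\ge 0$ and $1/p_1-1/q_1\ge 0$), so the inequality again holds. Thus I may assume $p_1<q_1$ and $p_2<q_2$. Together with the hypotheses $p_1\le p_2$ and $q_1\le q_2$, this puts us in the setting of Lemma~\ref{lelil}, and for each finite $A=\lambda\cap [j,j+1)$ I obtain
\[
\log \Lambda_{q_1,p_1}(A)\;\le\;\alpha\,\log \Lambda_{q_2,p_2}(A),\qquad \alpha=\frac{\tfrac{1}{p_1}-\tfrac{1}{q_1}}{\tfrac{1}{p_2}-\tfrac{1}{q_2}}.
\]

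Dividing by $j$ and taking $\limsup_{j\to\infty}$ on both sides, Lemma~\ref{theochar} then yields
\[
T_{q_1,p_1}(\lambda)\;\le\;\alpha\, T_{q_2,p_2}(\lambda)
\;=\;\frac{\tfrac{1}{p_1}-\tfrac{1}{q_1}}{\tfrac{1}{p_2}-\tfrac{1}{q_2}}\,T_{q_2,p_2}(\lambda),
\]
and clearing the denominator produces the claimed estimate. There is essentially no obstacle here: the only mild point is to make sure that the hypotheses of Lemma~\ref{lelil} are met (which is why the degenerate cases $p_i=q_i$ need to be isolated first), after which the rest is bookkeeping with the exponent~$\alpha$.
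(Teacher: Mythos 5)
Your argument is exactly the paper's: the corollary is obtained by combining the localization formula of Lemma~\ref{theochar} with the pointwise comparison of $\Lambda$-constants in Lemma~\ref{lelil}, applied to each block $\lambda\cap[j,j+1)$ and passing to the $\limsup$. The explicit treatment of the degenerate cases $p_i=q_i$ is a harmless addition, so the proposal is correct and essentially identical to the paper's proof.
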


Similarly, Proposition~\ref{cormax} also follows from localization and the behaviour of the constant $\Lambda_{q,p}$ .

\begin{proof}[Proof of Proposition~\ref{cormax}]
We give two proofs.
By Lemma~\ref{N-type}
we have
\[
\Lambda_{q,p}(\lambda\cap [j,j+1))\le \#(\lambda\cap [j,j+1))^{1/p-1/q} \,.
\]
So,
\[
\limsup_{j\to\infty} \frac{\log \Lambda_{q,p}(\lambda\cap [j,j+1))}{j}
\le \Big(\frac{1}{p}-\frac{1}{q}\Big)\limsup_{j\to\infty} \frac{\log \#(\lambda\cap [j,j+1))}{j}= \Big(\frac{1}{p}-\frac{1}{q}\Big)L(\lambda) \,,
\]
where in the last step we used Lemma~\ref{lella}. The conclusion now follows from Lemma~\ref{theochar}.

As an alternative proof, from Corollary~\ref{corlil}, \eqref{izagirre} and assuming the monotonicity of $p S_p(\lambda)$ proven (see Theorem~\ref{rem}), we see that
\begin{multline*}
    T_{q,p}(\lambda)   \le  \Big(\frac{1}{p}-\frac{1}{q}\Big)p T_{\infty,p}(\lambda)\le  \Big(\frac{1}{p}-\frac{1}{q}\Big)p S_{p}(\lambda) \\ 
    \le  \Big(\frac{1}{p}-\frac{1}{q}\Big) 2 S_{2}(\lambda) =  \Big(\frac{1}{p}-\frac{1}{q}\Big)L(\lambda). \qedhere
\end{multline*}
\end{proof}

\smallskip

\subsection{Hypercontractivity}\label{sechyper}
We study  sufficient  conditions  which  guarantee hypercontrativity of arbitrary  frequencies. The main aim  is to prove Theorem~\ref{corchar}, which then implies Corollary~\ref{burrito} (as explained in the introduction).

We start with the following simple reformulation of the $k$-th additive energy of a finite set $A\subseteq \R$:  
 \begin{equation} \label{corona}
    \begin{split}
       \big\| \sum_{n: \lambda_n\in A} & e^{-\lambda_n s} \big\|_{2k}^{2k}
        = \big\| \big(\sum_{n: \lambda_n\in A} e^{-\lambda_n s}\big)^k \big\|_{2}^{2}
        \\&
                = \big\| \sum_{l: \mu_l\in M_k(A)} \#\{(\lambda_{n_1},\ldots,\lambda_{n_k})\in A^k: \lambda_{n_1} +\ldots +\lambda_{n_k}=\mu_l\} e^{-\mu_l s}\big\|_{2}^{2}
        \\
        &
        = \sum_{l: \mu_l\in M_k(A)} \#\{(\lambda_{n_1},\ldots,\lambda_{n_k})\in A^k: \lambda_{n_1} +\ldots +\lambda_{n_k}=\mu_l\}^2=E_k(A)\,,
    \end{split}
 \end{equation}
  where    
    $M_k(A)=\left\{x\in\R: x=x_1+ \cdots +x_k,\ x_i\in A\right\}$.
    
    \smallskip

        The following lemma links Theorem~\ref{corchar} with Lemma~\ref{theochar}.

 \begin{lemma}\label{lemenI}
 There exists a constant $C>0$, such that for any finite set $A\subseteq \R$ and any $k\in \N$ the following holds
 \[
 \sup_{A'\subseteq A} \frac{E_k(A')^{1/2k}}{\sqrt{\#A'}}
 \leq
   \Lambda_{2k,2}(A)\le C \sqrt{\log \big(\#A+2\big)}  \sup_{A'\subseteq A} \frac{E_k(A')^{1/2k}}{\sqrt{\#A'}}.
   \]
 \end{lemma}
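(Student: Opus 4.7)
The lower bound is the easier half and I would do it first. For any $A'\subseteq A$, the polynomial $D_{A'}=\sum_{\lambda_n\in A'} e^{-\lambda_n s}$ is a legitimate competitor for $\Lambda_{2k,2}(A)$, since $A'\subseteq A$. By \eqref{corona} we have $\|D_{A'}\|_{2k}^{2k}=E_k(A')$ and clearly $\|D_{A'}\|_2=\sqrt{\#A'}$, so $\Lambda_{2k,2}(A)\ge E_k(A')^{1/2k}/\sqrt{\#A'}$; taking the supremum over $A'$ gives the left inequality.

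For the upper bound, let $K:=\sup_{A'\subseteq A} E_k(A')^{1/2k}/\sqrt{\#A'}$; note that testing on any singleton yields $K\ge 1$. Fix $D=\sum_{\lambda_n\in A} a_n e^{-\lambda_n s}$ and, after rescaling, assume $\|D\|_2=1$, so in particular $|a_n|\le 1$. Put $N=\#A$ and $J=\lceil\log_2 N\rceil+1$. The plan is a standard dyadic decomposition of the coefficients: let
\[
B_j=\{n\in A:\ 2^{-j-1}<|a_n|\le 2^{-j}\}, \qquad D^{(j)}=\sum_{n\in B_j} a_n e^{-\lambda_n s}.
\]
Split $D=D_{\mathrm{main}}+D_{\mathrm{tail}}$ according to $j\le J$ versus $j>J$. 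The tail is easy: since it has at most $N$ terms each of modulus $\le 1/(2N)$, the triangle inequality gives $\|D_{\mathrm{tail}}\|_{2k}\le \sum|a_n|\le 1/2\le K/2$.

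For each piece $D^{(j)}$ with $j\le J$, I expand $\|D^{(j)}\|_{2k}^{2k}=\|(D^{(j)})^k\|_2^2$ as in \eqref{corona}. Bounding each product $|a_{n_1}\cdots a_{n_k}|\le 2^{-jk}$ and using the definition of $E_k(B_j)$ gives
\[
\|D^{(j)}\|_{2k}^{2k}\le 2^{-2jk} E_k(B_j) \le 2^{-2jk} K^{2k} (\#B_j)^{k}.
\]
On the other hand $\|D^{(j)}\|_2^2=\sum_{n\in B_j}|a_n|^2\ge 2^{-2j-2}\,\#B_j$, hence $\sqrt{\#B_j}\le 2^{j+1}\|D^{(j)}\|_2$, and therefore $\|D^{(j)}\|_{2k}\le 2K\,\|D^{(j)}\|_2$. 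Since the frequency sets of the $D^{(j)}$ are disjoint, the $\|D^{(j)}\|_2$ are $\ell^2$-orthogonal with respect to $\|D\|_2$, so by Cauchy--Schwarz
\[
\|D_{\mathrm{main}}\|_{2k}\le \sum_{j=0}^{J}\|D^{(j)}\|_{2k}\le 2K\sqrt{J+1}\Big(\sum_{j=0}^{J}\|D^{(j)}\|_2^2\Big)^{1/2}\le 2K\sqrt{J+1}\,\|D\|_2.
\]
Combining with the tail estimate and recalling $J\le \log_2(N+2)+O(1)$ yields $\|D\|_{2k}\le C K\sqrt{\log(\#A+2)}\,\|D\|_2$, which is the desired upper bound. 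The only delicate point is extracting the $\sqrt{\log(\#A+2)}$ rather than $\log(\#A+2)$; this is exactly what the $\ell^2$-orthogonality of the $D^{(j)}$ together with Cauchy--Schwarz buys us, and truncating at $J\sim \log N$ is what makes the tail cost negligible.
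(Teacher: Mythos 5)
Your proof is correct and follows essentially the same route as the paper's: the lower bound via \eqref{corona} with unimodular coefficients, and the upper bound by a dyadic decomposition of the coefficients into $O(\log(\#A+2))$ size levels, a per-level estimate $\|D^{(j)}\|_{2k}\le 2K\|D^{(j)}\|_2$ coming from the additive-energy bound on the level's frequency set, absorption of the tiny coefficients through an $\ell^1$ bound, and Cauchy--Schwarz over the levels to extract the $\sqrt{\log}$ factor. The paper implements the small-coefficient truncation slightly differently (sorting and absorbing the smallest coefficients whose $\ell^1$-sum is at most $\|D\|_2$, which caps the number of dyadic levels), but this is only a bookkeeping variation of the same argument.
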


\begin{proof}
To begin with, note that for each subset  $A'\subseteq A$, \eqref{corona} yields
\[
\frac{E_k(A')^{1/k}}{\#A'}= \frac{\big\| \sum_{n: \lambda_n\in A'} e^{-\lambda_n s} \big\|_{2k}^2}{\big\| \sum_{n: \lambda_n\in A'} e^{-\lambda_n s} \big\|_{2}^2}\le \Lambda_{2k,2}(A)^2 \,,
\]
which immediately gives the lower bound.

Conversely, let $A=\{\mu_n\}_{n=1}^N\subseteq \R$ be a set of $N$ elements and let $D=\sum_{n=1}^N a_n e^{-\mu_n s}$ be a Dirichlet polynomial. Take $\theta$, a permutation of $\{1,\ldots, N\}$ such that $|a_{\theta(l)}|$ is non-decreasing and define $l_0\ge 1$ to be the smallest integer such that
	\[
	\sum_{l=1}^{l_0} |a_{\theta(l)}|\ge \|D\|_2 \,.
	\]
	Now partition $\{l_0,\ldots, N\}$ into (possibly empty) sets $\{\Gamma_r\}_{r=r_1}^{r_2}$ with $r_1,r_2\in\zz$ such that $2^r\le |a_{\theta(l)}|<2^{r+1}$, whenever $l\in \Gamma_r$ and $r_1 \leq r \leq r_2$. Notice that $|a_{\theta(N)}|\le \|D\|_2  \le  l_0 |a_{\theta(l_0)}|$. So, we can choose $r_1$ and $r_2$ such that  $r_2-r_1 \leq C \log(N+2)$, where $C>0$ is an absolute constant. Combining all this with \eqref{corona} gives that  
\begin{align*}
		\|D\|_{2k}&= \Big\| \sum_{l=1}^{l_0-1} a_{\theta(l)} e^{-\mu_{\theta(l)} s} + \sum_{r=r_1}^{r_2} \sum_{l\in \Gamma_r} a_{\theta(l)} e^{-\mu_{\theta(l)} s} \Big\|_{2k}
		\\ &\le  \sum_{l=1}^{l_0-1} |a_{\theta(l)}| +  \sum_{r=r_1}^{r_2} \Big\| \sum_{l\in \Gamma_r} a_{\theta(l)} e^{-\mu_{\theta(l)} s} \Big\|_{2k}
		\\ &= \|D\|_2 +  \sum_{r=r_1}^{r_2} \Big\| \sum_{l_1,\ldots,l_k\in \Gamma_r} \prod_{i=1}^k a_{\theta(l_i)} e^{-\mu_{\theta(l_i)} s}\Big\|_2^{1/k}
		\\ &= \|D\|_2 +  \sum_{r=r_1}^{r_2} \Big(
\sum_{\nu \in \{ \sum_{i=1}^k \mu_{\theta(l_i)} \colon l_1,\ldots l_k  \in \Gamma_r\}}
\Big
(\sum_{\stackrel{l_1,\ldots,l_k\in \Gamma_r}{\sum \mu_{\theta(l_i)}=\nu}} \prod_{i=1}^k |a_{\theta(l_i)}| \Big)^2 \Big)^{1/2k}
		\\ &\le \|D\|_2 +  \sum_{r=r_1}^{r_2} 2^{r+1}\Big(\sum_{\nu \in \{ \sum_{i=1}^k \mu_{\theta(l_i)} \colon l_1,\ldots l_k  \in \Gamma_r\}} \Big(\sum_{\stackrel{l_1,\ldots,l_k\in \Gamma_r}{\sum \mu_{\theta(l_i)}=\nu}} 1 \Big)^2 \Big)^{1/2k}
		\\&=\|D\|_2 +   \sum_{r=r_1}^{r_2} 2^{r+1}E_k(\{\mu_{\theta(l)}: l\in \Gamma_r\})^{1/2k}
		\\ &\le \|D\|_2 +   2 \sum_{r=r_1}^{r_2} 2^r \sqrt{\#\Gamma_r} \sup_{A'\subseteq A} \frac{E_k(A')^{1/2k}}{\sqrt{\#A'}}
		\\ &\le \|D\|_2 +   2 \sum_{r=r_1}^{r_2} (\sum_{l\in \Gamma_r} |a_{\theta(l)}|^2)^{1/2}\sup_{A'\subseteq A} \frac{E_k(A')^{1/2k}}{\sqrt{\#A'}}
		\\ &\le C \sqrt{ \log (N+2)} \|D\|_2 \sup_{A'\subseteq A} \frac{E_k(A')^{1/2k}}{\sqrt{\#A'}},
	\end{align*}
	 where in the last step we used the Cauchy-Schwarz inequality and the constant $C$ may have changed but remains universal. This shows that
 \begin{equation*}
   \Lambda_{2k,2}(A)\le C \sqrt{ \log \big(\#A+2\big)}  \sup_{A'\subseteq A} \frac{E_k(A')^{1/2k}}{\sqrt{\#A'}}. \qedhere
 \end{equation*}
		    \end{proof}

\smallskip

\begin{lemma}	\label{lemen}
Let $\lambda$ be a frequency. Then for every integer $k\ge 2$ we have
\[
\limsup_{j\to\infty} \sup_{A\subseteq \lambda\cap [j,j+1)}\frac{1}{2j}\log \Big(\frac{E_k(A)^{1/k}}{\#A}\Big)
\leq T_{2k,2}(\lambda)\,.
\]
Moreover, if  $\limsup \frac{\log\log n}{\lambda_n}=0$, then this is an equality.
\end{lemma}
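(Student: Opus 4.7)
The plan is to combine the two inequalities of Lemma~\ref{lemenI} with the localization identity from Lemma~\ref{theochar}. Note first that the quantity we want to compare to $T_{2k,2}(\lambda)$ can be rewritten as
\[
\frac{1}{2j}\log \Big(\frac{E_k(A)^{1/k}}{\#A}\Big) = \frac{1}{j}\log \Big(\frac{E_k(A)^{1/2k}}{\sqrt{\#A}}\Big),
\]
which is precisely the form appearing in Lemma~\ref{lemenI}.

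For the first (unconditional) inequality, I would apply the lower bound of Lemma~\ref{lemenI} with $A=\lambda\cap[j,j+1)$, to get
\[
\sup_{A'\subseteq \lambda\cap[j,j+1)} \frac{E_k(A')^{1/2k}}{\sqrt{\#A'}} \le \Lambda_{2k,2}(\lambda\cap[j,j+1)).
\]
Taking logarithms, dividing by $j$, and passing to the $\limsup$ in $j$, the right-hand side tends to $T_{2k,2}(\lambda)$ by Lemma~\ref{theochar}, yielding the claimed inequality.

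For the reverse inequality under the hypothesis $\limsup \tfrac{\log\log n}{\lambda_n}=0$, I would apply the upper bound in Lemma~\ref{lemenI}, which gives
\[
\log \Lambda_{2k,2}(\lambda\cap[j,j+1)) \le \log C + \tfrac{1}{2}\log\log\big(\#(\lambda\cap[j,j+1))+2\big) + \log\Big(\sup_{A'\subseteq \lambda\cap[j,j+1)}\tfrac{E_k(A')^{1/2k}}{\sqrt{\#A'}}\Big).
\]
Dividing by $j$ and taking $\limsup$, the left-hand side becomes $T_{2k,2}(\lambda)$ by Lemma~\ref{theochar}. The constant term trivially vanishes. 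The main point is to show that the doubly logarithmic term vanishes in the limit, i.e.\ that $\log\log \#(\lambda\cap[j,j+1)) = o(j)$; this is where the growth hypothesis on $\lambda$ enters. Indeed, if $N=N(j)$ is the largest index with $\lambda_N<j+1$, then trivially $\#(\lambda\cap[j,j+1))\le N$, and the hypothesis gives $\log\log N \le \varepsilon\lambda_N<\varepsilon(j+1)$ for all sufficiently large $j$, for any prescribed $\varepsilon>0$.

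The main (minor) obstacle is the logarithmic correction $\sqrt{\log(\#A+2)}$ from Lemma~\ref{lemenI}, since without controlling the density of $\lambda$ one cannot rule out $\#(\lambda\cap[j,j+1))$ growing so fast that $\log\log\#(\lambda\cap[j,j+1))$ is comparable to $j$; the hypothesis $\limsup \tfrac{\log\log n}{\lambda_n}=0$ is precisely what makes this correction negligible on the exponential scale measured by $T_{2k,2}(\lambda)$.
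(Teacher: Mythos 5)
Your proposal is correct and follows essentially the same route as the paper: the unconditional inequality comes from the lower bound of Lemma~\ref{lemenI} combined with Lemma~\ref{theochar}, and the equality under $\limsup\frac{\log\log n}{\lambda_n}=0$ comes from the upper bound of Lemma~\ref{lemenI}, with the growth hypothesis used exactly as you indicate to show that $\frac{1}{2j}\log\log\big(\#(\lambda\cap[j,j+1))+2\big)\to 0$, so the $\sqrt{\log(\#A+2)}$ correction is negligible on the exponential scale. The paper's bound $\#(\lambda\cap[j,j+1))\le e^{e^{\delta(j+1)}}$ is obtained from $j\le\lambda_n<j+1$ rather than via your largest index $N(j)$ with $\lambda_{N(j)}<j+1$, but this is the same estimate in substance.
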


\begin{proof}
    	 By  (the lower bound in) Lemma~\ref{lemenI} we have
    \begin{equation*}
      \limsup_{j\to\infty} \sup_{A\subseteq \lambda\cap [j,j+1)}\frac{1}{2j}\log \Big(\frac{E_k(A)^{1/k}}{\#A}\Big)
		\le \limsup_{j\to\infty} \frac{1}{j}\log \Lambda_{2k,2}(\lambda\cap [j,j+1))
 \,,
    \end{equation*}
    and Lemma~\ref{theochar} proves the first claim.\\

Let us suppose now that $\limsup \tfrac{\log\log n}{\lambda_n}=0$ and see that the reverse inequality also holds. Note in  first place that by (the upper bound in) Lemma~\ref{lemenI} we have
\begin{equation}\label{eqzero}
\begin{split}
&
		\limsup_{j\to\infty} \frac{\log \Lambda_{2k,2}(\lambda\cap [j,j+1))}{j}
		\\ &\le \limsup_{j\to\infty} \Big(\frac{1}{2j}\log \big( C\log \big(\#(\lambda\cap [j,j+1))+2\big)\big) + \frac{1}{2j} \log \sup_{A\subseteq \lambda\cap [j,j+1)} \frac{E_k(A)^{1/k}}{\#A}\Big) 
		\\ &\le \limsup_{j\to\infty} \frac{1}{2j}\log\log \big(\#(\lambda\cap [j,j+1))+2\big)
		+\limsup_{j\to\infty} \sup_{A\subseteq \lambda\cap [j,j+1)} \frac{1}{2j} \log  \frac{E_k(A)^{1/k}}{\#A}.
\end{split}
\end{equation}
Now notice that for every $\delta>0$ there exists an $n_0$ such that
\[
\log \log n \le \delta \lambda_n \,,
\]
for every  $n\ge n_0$. For each integer $j> \lambda_{n_0}$, if $j\le \lambda_n< (j+1)$, then
\[
n\le e^{e^{\delta \lambda_n}}\le e^{e^{\delta (j+1)}} \,.
\]
In particular,
\[
\#(\lambda\cap [j,j+1))\le e^{e^{\delta (j+1)}} \,.
\]
So we get
\[
	\limsup_{j\to\infty} \frac{1}{2j}\log\log \big(\#(\lambda\cap [j,j+1))+2\big)
\le \limsup_{j\to\infty} \frac{j+1}{2j}\delta= \frac{\delta}{2}.
\]
Since $\delta$ was arbitrary, the result follows joining \eqref{eqzero}  and again Lemma~\ref{theochar}.
\end{proof}

\begin{proof}[Proof of Theorem~\ref{corchar}]
First notice that \ref{itii} $\Leftrightarrow$ \ref{itiii} follows from Lemmas \ref{theochar} and \ref{lemen}. It is also clear that \ref{iti} implies \ref{itii}. Regarding the converse implication, by Corollary~\ref{corlil} we see that $\tau_{\sigma}$ 
maps $\mathcal{H}_{1}(\lambda)$ into $\mathcal{H}_{q}(\lambda)$ for every $1\le q<\infty$ and every $\sigma>0$. Since $\hh_p(\lambda)\subseteq \hh_1(\lambda)$ for
every $1\le p<\infty$, hypercontractivity follows.
\end{proof}

\subsection{Strips}
Next we turn our attention to Theorem~\ref{rem} and Proposition~\ref{propext}.

\begin{proof}[Proof of Theorem~\ref{rem}]
First assume that $L(\lambda)<\infty$. The lower bound in \eqref{rem1} as well as the upper bound in \eqref{rem2} are known, since  $S_p(\lambda)$ is decreasing in $p$ and (recall \eqref{izagirre}) $S_2(\lambda) = L(\lambda)/2$.
The upper bound in \eqref{rem1} can be deduced from \eqref{eqlip}, but we give a simpler more direct argument. For $\sigma> L(\lambda)/p$ and $D\in \mathcal{H}_p(\lambda)$  with $1\le p\le 2$ we get,
\[
\sum_{n} |a_n|e^{-\lambda_n \sigma}\le
\Big(\sum_{n} |a_n|^{p'}\Big)^{1/p'} \Big(\sum_{n} e^{-\lambda_n p\sigma} \Big)^{1/p}
\le \|D\|_{p} \Big(\sum_{n} e^{-\lambda_n p\sigma} \Big)^{1/p}<\infty,
\]
where in the  penultimate step  we use the Hausdorff-Young inequality \eqref{hy1}.
Let us check that $S_p(\lambda)\ge S_2(\lambda) = L(\lambda)/2$ for $p\ge 2$. Take a Dirichlet series $D=\sum a_n e^{-\lambda_n s}\in \hh_{2}(\lambda)$ and $\sigma>S_p(\lambda)$ (recall that $S_p(\lambda)\le L(\lambda)/2<\infty$). With this at hand, for $N\in\N$ and  a sequence of independent Bernoulli random variables $\varepsilon_n=\pm 1$, define the random Dirichlet series
\[
D_{N,\varepsilon}= \sum_{n=1}^N a_n \varepsilon_n e^{-\lambda_n s}\,.
\]
Then by~\eqref{closedgraph} for  each choice of signs,
\[
\sum_{n=1}^N |a_n|e^{-\lambda_n \sigma}\le C_\sigma \|D_{N,\varepsilon}\|_p\,.
\]
Averaging and applying the Kahane-Khinchin inequality we get
\[
\sum_{n=1}^N |a_n|e^{-\lambda_n \sigma}\le C_\sigma\E_\varepsilon \big[\|D_{N,\varepsilon}\|_p^p\big]^{1/p}\le C_\sigma C_p \Big(\sum_{n=1}^N|a_n|^2\Big)^{1/2}\le C_\sigma C_p \|D\|_2\,.
\]
Letting $N\to \infty$ and since $D\in  \hh_{2}(\lambda)$ was arbitrary, we see that $\sigma> S_2(\lambda) = L(\lambda)/2$.

Regarding \eqref{eqlip}, notice that from the definition of the strips and Corollary~\ref{corlil}, for $p<q$ we have
\begin{equation}   \label{eqlip2}
    S_p(\lambda)\le S_q(\lambda)+T_{q,p}(\lambda)\le S_q(\lambda)+p(\tfrac{1}{p}-\tfrac{1}{q}) T_{\infty,p}(\lambda)\le S_q(\lambda)+(1-\tfrac{p}{q}) S_p(\lambda) \,.
\end{equation}
Rearranging we get
\[
pS_p(\lambda)\le qS_q(\lambda)\,.
\]
Moreover, from \eqref{eqlip2} and the fact that $S_p(\lambda)$ is decreasing we have
\[
0\le  S_p(\lambda) - S_q(\lambda) \le \frac{S_p(\lambda)}{q}(q-p)\,.
\]
So, $S_p(\lambda)$ is Lipschitz as a function of $p$ and therefore it is differentiable almost everywhere. Dividing by $q-p$ and letting $q \to p$ we get \eqref{eqlip}.
This completes the proof under the assumption that $L(\lambda)<\infty$.

It remains to show that $L(\lambda)<\infty$ whenever $S_p(\lambda)<\infty$ for some $p$.
Since $S_p(\lambda)$ is decreasing in $p$, without loss of generality we assume that $S_p(\lambda)<\infty$ for some $p\ge 2$ and let $\sigma>S_p(\lambda)$. For $j\in \N$, we have
\begin{multline*}
	\#(\lambda\cap [j,j+1)) \le e^{(j+1)\sigma} \sum_{n: \lambda_{n}\in [j,j+1)} e^{-\lambda_n \sigma}
\\
	\le C_\sigma e^{(j+1)\sigma} \Big\|\sum_{n: \lambda_{n}\in [j,j+1)} e^{-\lambda_n s} \Big\|_p
	\le C_\sigma e^{(j+1)\sigma} \#(\lambda\cap [j,j+1))^{1/p'},
\end{multline*}
where in the last step we used  the  Hausdorff-Young inequality \eqref{hy2}. Rearranging the last expression we get
\[
\#(\lambda\cap [j,j+1))\le C_{\sigma,p} e^{jp\sigma}\,.
\]
So, by Lemma~\ref{lella},
\[
L(\lambda)=\limsup_{j\to\infty} \frac{\log \#(\lambda\cap [j,j+1))}{j}\le p\sigma <\infty \,.\qedhere 
\]
\end{proof}

\begin{proof}[Proof of Proposition~\ref{propext}] We write $p_0=p_0(\lambda)$ to alleviate notation. Without loss of generality, assume that $q>p$. Fix $p\le r \le p_0$. By Corollary~\ref{corlil} and Theorem~\ref{rem}, we have
\[
\frac{T_{q,p}(\lambda)}{ \tfrac{1}{p}-\tfrac{1}{q}} \le pT_{\infty,p}(\lambda)
\le p S_{p}(\lambda)\le r S_{r}(\lambda)
\le p_0 S_{p_0}(\lambda)=\frac{p_0 L(\lambda)}{2}\,.
\]
A careful look at the previous chain of inequalities shows that \eqref{eqext}, \ref{it2} $\Rightarrow$ \ref{it1} and \ref{it1} $\Rightarrow$ \ref{it3} hold.

It only remains to check that \ref{it3} $\Rightarrow$ \ref{it4}, since clearly \ref{it4} $\Rightarrow$ \ref{it2}. 
Choose $s$ such that $p \leq r < s \leq  p_0$ and $s \le q$.
Then by Remark~\ref{bilbao} and Corollary~\ref{corlil},
\begin{equation*}
        \frac{p_0 L(\lambda)}{2}(\tfrac{1}{r}-\tfrac{1}{s})= S_r(\lambda) - S_{s}(\lambda)\le T_{s,r}(\lambda) \le \frac{\tfrac{1}{r}-\tfrac{1}{s}}{\tfrac{1}{r}-\tfrac{1}{q}}T_{q,r}(\lambda)  \,,
\end{equation*}
which together with \eqref{eqext} gives \ref{it4}.
    
To conclude the proof, note that if $S_1(\lambda)=L(\lambda)$, then \eqref{eqext} implies that $p_0 =2$. Then \ref{it1} holds for $p=1$, and all the strips involved are determined.
\end{proof}

\subsection{Optimality}
\label{opti}
Now we turn to Propositions \ref{propshift} and \ref{example}, that deal with the optimality of the bounds in Proposition~\ref{cormax} and Theorem~\ref{rem}. The proofs are inspired by  Bayart's example from \cite[Theorem~5.2]{Ba}, a very structured frequency composed of packets of arithmetic progressions that attains the upper bounds in our problems. The range of all possible values can then be achieved by combining this frequency with an unstructured $\Q$-li frequency and balancing the proportion of independent vs. structured points.

We start with the following preliminary result.
\begin{lemma}\label{lemsup}
For every pair of frequencies $\mu,\nu$, we have
\[
L(\mu\cup\nu)=\max\{L(\mu),L(\nu)\}\,.
\]
If, addictionally $\langle\mu\rangle_\Q\cap \langle\nu\rangle_\Q=0$, then the following statements hold:
\begin{enumerate}[(a)]
\item $S_p(\mu\cup\nu)=\max\{S_p(\mu),S_p(\nu)\}$  for every $1\le p \le 2$;
\item for every $1\le p \le q \le \infty$ and $\sigma>0$, the operator $\tau_\sigma$ maps $\hh_p(\mu \cup \nu)$ into $\hh_q(\mu \cup \nu)$ if and only if it separately does so for $\mu$ and $\nu$.
\end{enumerate}
\end{lemma}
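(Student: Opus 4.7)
For the first claim I will invoke Lemma~\ref{lella}, which characterizes $L(\lambda)$ as $\limsup_{j\to\infty} \tfrac{\log \#(\lambda\cap [j,j+1))}{j}$. Since
\[
\max\{\#(\mu\cap [j,j+1)),\#(\nu\cap [j,j+1))\} \le \#((\mu\cup\nu)\cap [j,j+1)) \le \#(\mu\cap [j,j+1)) + \#(\nu\cap [j,j+1)),
\]
passing to logarithms introduces an additive $O(1)$ which disappears after dividing by $j$; using $\limsup \max = \max\limsup$ then yields $L(\mu\cup\nu)=\max\{L(\mu),L(\nu)\}$, with no independence hypothesis required.

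For parts (a) and (b) the key technical step is a splitting principle, obtained via the group framework of Section~\ref{secprel}. Taking minimal $\mu$- and $\nu$-Dirichlet groups $(G_\mu,\beta_\mu)$, $(G_\nu,\beta_\nu)$, I form $G=G_\mu\times G_\nu$ with $\beta(t)=(\beta_\mu(t),\beta_\nu(t))$. A character of $G$ factors as $\chi\otimes\psi$, and $(\chi\otimes\psi)\circ\beta\equiv 1$ forces $\chi\circ\beta_\mu=e^{-i\alpha\,\cdot}$ and $\psi\circ\beta_\nu=e^{i\alpha\,\cdot}$ with $\alpha\in\langle\mu\rangle_\Z\cap\langle\nu\rangle_\Z\subseteq\langle\mu\rangle_\Q\cap\langle\nu\rangle_\Q=\{0\}$; hence $\chi=\psi=1$ and $\beta$ has dense range. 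Equipping $\mu$-frequencies with the characters $h^\mu_{\lambda_n}\otimes 1$ and $\nu$-frequencies with $1\otimes h^\nu_{\lambda_n}$, $(G,\beta)$ is a $(\mu\cup\nu)$-Dirichlet group.

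Any $(\mu\cup\nu)$-polynomial $D$ splits uniquely as $D=D_\mu+D_\nu$; choose the convention that a possible constant term is absorbed into $D_\mu$, so that the associated $f_\nu$ on $G_\nu$ has mean zero. Contractivity of conditional expectation on $L^p(G)$ yields
\[
\|D_\mu\|_p = \Bigl\|\int_{G_\nu} (f_\mu(x)+f_\nu(y))\,dy\Bigr\|_{L^p(G_\mu)} \le \|D\|_p,
\]
and the analogous projection onto $G_\nu$, together with $|\widehat{f_\mu}(0)|\le \|f_\mu\|_p$, gives $\|D_\nu\|_p\le 2\|D\|_p$. The triangle inequality supplies $\|D\|_p\le \|D_\mu\|_p+\|D_\nu\|_p$, so
\[
\tfrac{1}{2}\max\{\|D_\mu\|_p,\|D_\nu\|_p\} \le \|D\|_p \le \|D_\mu\|_p+\|D_\nu\|_p.
\]
From here, (b) is immediate: if $\tau_\sigma$ is bounded separately on the two Hardy spaces, then $\|\tau_\sigma D\|_q \le \|\tau_\sigma D_\mu\|_q+\|\tau_\sigma D_\nu\|_q$ bounds by $\|D\|_p$ via the splitting; conversely, any $\mu$-polynomial (respectively $\nu$-polynomial) sits isometrically inside the $(\mu\cup\nu)$-Hardy space (the formula \eqref{pnorm} only depends on the polynomial itself), so boundedness on the union descends to each piece. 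For (a), the closed-graph characterization \eqref{closedgraph} combined with the splitting gives $S_p(\mu\cup\nu)\le\max\{S_p(\mu),S_p(\nu)\}$ by summing $\sum|a_n|e^{-\lambda_n\sigma}$ over $\mu$ and $\nu$ separately, while the reverse inequality follows again by restricting to $\mu$- and $\nu$-polynomials.

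The main obstacle is the verification that $(G_\mu\times G_\nu,\beta)$ is a bona fide $(\mu\cup\nu)$-Dirichlet group; this is precisely where the $\Q$-independence hypothesis is used. The bookkeeping for the constant term (one of $\mu$, $\nu$ may contain $0$) is minor and only costs a factor of $2$, which is harmless for the statements in (a) and (b).
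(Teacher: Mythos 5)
Your proposal is correct and takes essentially the same route as the paper: the counting characterization of $L$ via Lemma~\ref{lella}, a product $(\mu\cup\nu)$-Dirichlet group whose dense range is exactly where $\langle\mu\rangle_\Q\cap\langle\nu\rangle_\Q=0$ enters, and the conditional-expectation (Jensen) splitting $\max\{\|D_\mu\|_p,\|D_\nu\|_p\}\le 2\|D\|_p$ combined with the triangle inequality to deduce (a) and (b). The only cosmetic differences are that the paper obtains the product representation by invoking \cite[Theorem~3.28]{DeSch} with Bohr matrices and removes the constant-term issue by assuming $\mu_1,\nu_1\neq 0$, whereas you verify the product Dirichlet group directly through the annihilator argument and absorb the constant at the cost of a harmless factor $2$.
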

\begin{proof}
	From Lemma~\ref{lella} we see that
	\begin{align*}
		\max\{L(\mu),L(\nu)\}\le \limsup_{j\to\infty} \frac{\log \#\big((\mu\cup\nu)\cap [j,j+1)\big)}{j}=L(\mu\cup\nu).
	\end{align*}
Conversely,
\begin{align*}
 L(\mu\cup\nu)&=\limsup_{j\to\infty} \frac{\log \#\big((\mu\cup\nu)\cap [j,j+1)\big)}{j}
 \\&\le \limsup_{j\to\infty} \frac{\log \big(2\max\{\#(\mu\cap [j,j+1)),\#(\nu\cap [j,j+1))\}\big)}{j}
 \\&\le \max\Big\{\limsup_{j\to\infty} \frac{\log \#(\mu\cap [j,j+1))}{j}, \limsup_{j\to\infty} \frac{\log \#(\nu\cap [j,j+1))}{j}\Big\}
 \\&=\max\{L(\mu),L(\nu)\} .
\end{align*}
It is clear that
\[
\max\{S_p(\mu),S_p(\nu)\}\le S_p(\mu\cup\nu)\,.
\]
and that if  $\tau_\sigma$ maps $\hh_p(\mu \cup \nu)$ into $\hh_q(\mu \cup \nu)$ then it separately does so for $\mu$ and $\nu$. So, it remains to prove the converse statements.

Assume without loss of generality that $\mu_1,\nu_1\neq 0$.
It follows from \cite[Theorem~3.28]{DeSch} and the fact that  $\langle\mu\rangle_\Q\cap \langle\nu\rangle_\Q=0$ that there are compact abelian groups $G,H$ with Haar measures $dg,dh$ and characters $\xi_{\mu_n}\in\widehat{G}$, $\zeta_{\nu_n}\in \widehat{H}$ such that for every  $D=\sum a_n e^{-\mu_n s} +\sum b_n e^{-\nu_n s}\in \hh_p(\mu \cup \nu)$,
\[
\|D\|_{\hh_p(\mu\cup \nu)}^p=\int_G \int_H\Big|\sum a_n \xi_{\mu_n}(g) + \sum b_n \zeta_{\nu_n}(h) \Big|^pdg dh\,.
\]
We sketch the argument using the language of \cite{DeSch}. 
Denote by $\mathbb{Q}_d$ the group of all rationals endowed with the discrete topology, and by $\mathbb{Q}_d^\infty$ its countable product. Moreover, write $\mu = (R_1,b_1)$, where $R_1$ is a Bohr matrix and $b_1$ a rationally independent basis for
$\mu$. Similarly, we decompose  $\nu= (R_2,b_2)$. Consider now the mapping 
\[
\gamma: \mathbb{Q}_d^\infty \times \mathbb{Q}_d^\infty \to \mathbb{R} \, \text{ given by } \,
\gamma(\alpha_1, \alpha_2)= \langle \alpha_1,b_1 \rangle + \langle \alpha_2,b_2 \rangle \,.
\]
 The assumption on $\mu$ and $\nu$
ensures  that this is an injective homomorphism, and then the dual of it, $\beta: \mathbb{R} \to  \widehat{\mathbb{Q}_d^\infty} \times \widehat{\mathbb{Q}_d^\infty}$, is a   homomorphism with dense range. 
Finally, it follows that the pair $( \widehat{\mathbb{Q}_d^\infty} \times \widehat{\mathbb{Q}_d^\infty}, \beta)$ serves as  a $(\mu \cup \nu)$-Dirichlet group for $\hh_p(\mu \cup \nu)$.

The characters  $\xi_{\mu_n}\in\widehat{G}$ and $\zeta_{\nu_n}\in \widehat{H}$ are non-trivial by the assumption $\mu_1,\nu_1\neq 0$, and so,
\begin{multline*} 
\Big\|\sum a_n e^{-\mu_n s}\Big\|_{\hh_p(\mu)} =\Big( \int_G \Big|\sum a_n \xi_{\mu_n}(g) \Big|^pdg \Big)^{1/p}\\ 
=\Big(\int_G \Big|\sum a_n \xi_{\mu_n}(g) + \int_H\sum b_n \zeta_{\nu_n}(h)dh \Big|^pdg \Big)^{1/p}
\le\|D\|_{\hh_p(\mu\cup \nu)} \,.
\end{multline*}
Hence,
\[
\max\Big\{\Big\|\sum a_n e^{-\mu_n s}\Big\|_{\hh_p(\mu)}, \Big\|\sum b_n e^{-\nu_n s}\Big\|_{\hh_p(\nu)}\Big\} \le\|D\|_{\hh_p(\mu\cup \nu)} \,.
\]
So if $\sigma>\max\{S_p(\mu),S_p(\nu)\}$,
\[
\sum |a_n| e^{-\mu_n \sigma}+\sum |b_n| e^{-\nu_n \sigma}\le C_\sigma \Big\|\sum a_n e^{-\mu_n s}\Big\|_{p}+ C'_\sigma \Big\|\sum b_n e^{-\nu_n s}\Big\|_{p}\le C''_\sigma\|D\|_{p}\,.
\]
Since $\sigma>\max\{S_p(\mu),S_p(\nu)\}$ was arbitrary, we get
\[
S_p(\mu\cup\nu)\le\max\{S_p(\mu),S_p(\nu)\} \,. 
\]
Finally, assume that $\tau_\sigma:\hh_p(\mu)\to\hh_q(\mu)$ is bounded and the same holds for $\nu$. For every  $D=\sum a_n e^{-\mu_n s} +\sum b_n e^{-\nu_n s}\in \hh_p(\mu \cup \nu)$,
\begin{multline*}
	\|\tau_\sigma D\|_q \le \Big\|\sum a_n e^{-\mu_n \sigma} e^{-\mu_n s}\Big\|_{\hh_q(\mu)}+ \Big\|\sum b_n e^{-\nu_n \sigma} e^{-\nu_n s}\Big\|_{\hh_q(\nu)}\\ 
	\le C  \Big(\Big\|\sum a_n  e^{-\mu_n s}\Big\|_{\hh_p(\mu)}+ \Big\|\sum b_n  e^{-\nu_n s}\Big\|_{\hh_p(\nu)}\Big)
\le 2C 	\| D\|_p. \qedhere
\end{multline*}
\end{proof}

From now on we let $\mu$ be a frequency satisfying the following properties:
\begin{itemize}
	\item $\mu=\{\mu_{j,k}\}_{j\in\N,0\le k \le  2^{j} };$
	\item $\{\mu_{j,k}\}_k \subseteq  [j,j+1)$ for every $j\in\N$;
	\item $\mu_{j,k}=\mu_{j,0}+k \delta_j$ for some $\delta_j>0$;
	\item $\{\mu_{j,0}\}_j\cup \{\delta_j\}_j\cup \{2\pi\}$ is $\Q$-li.
\end{itemize}
This is Bayart's example from \cite[Sections 4 and 5]{Ba}.

As a starting point for the proof of Proposition~\ref{propshift}, we describe the behaviour of $\tau_\sigma$ for $\mu$.

\begin{lemma}\label{lemext} The following statements hold:
\begin{enumerate}[(a)]
\item \label{lemext1} If $1 \leq p \le q \leq \infty$ and $\tau_\sigma$ maps $\hh_p(\mu)$ into $\hh_q(\mu)$, then $\sigma\ge (\tfrac{1}{p}-\tfrac{1}{q})L(\mu)$;

\item \label{lemext2} If $1  \le p \leq 2$ and $p \leq q < \infty $, then for $\sigma \ge (\tfrac{1}{p}-\tfrac{1}{q})L(\mu)$, the operator
$\tau_\sigma$ maps $\hh_p(\mu)$ into $\hh_q(\mu)$. 
\end{enumerate}
\end{lemma}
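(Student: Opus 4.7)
The first step is to realize $\hh_p(\mu)$ concretely. As $\{\mu_{j,0}\}_j \cup \{\delta_j\}_j \cup \{2\pi\}$ is $\Q$-linearly independent, we may take as a $\mu$-Dirichlet group the product $G = \T^\N \times \T^\N$ with the Kronecker flow that sends $\mu_{j,k}$ to the character $(z,w)\mapsto z_j w_j^k$. Under the identification of Section~\ref{secprel}, each Dirichlet polynomial $D$ corresponds to $f(z,w) = \sum_{j} z_j\, p_j(w_j)$ with $p_j(w) = \sum_{k=0}^{2^j} a_{j,k} w^k$ a genuine polynomial of degree $\le 2^j$ on $\T$. Kronecker's theorem gives, at each level, $\|\sum_k a_k e^{-\mu_{j,k}s}\|_p = \|\sum_k a_k w^k\|_{L^p(\T)}$, and a direct count yields $L(\mu) = \log 2$.

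For part \textit{(a)}, I would combine this identity with Lemma~\ref{theochar}: the constant $\Lambda_{q,p}(\mu\cap[j,j+1))$ coincides with the sharp classical Nikolskii constant on $\T$ for polynomials of degree $\le 2^j$, which is $\asymp 2^{j(1/p-1/q)}$ (upper bound from Lemma~\ref{N-type}, matching lower bound from, e.g., one-sided Fej\'er-type kernels). Hence $T_{q,p}(\mu) = (\tfrac{1}{p}-\tfrac{1}{q})L(\mu)$, so that any $\sigma$ for which $\tau_\sigma$ maps $\hh_p(\mu)$ into $\hh_q(\mu)$ must satisfy $\sigma \ge (\tfrac{1}{p}-\tfrac{1}{q})L(\mu)$.

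For part \textit{(b)} I first treat $q\ge 2$ at the endpoint $\sigma_\star := (\tfrac{1}{p}-\tfrac{1}{q})L(\mu)$. Kahane--Khintchine for the Steinhaus system $(z_j)$, applied pointwise in $w$, gives
\[
\|D\|_p^p \asymp \int_{\T^\N}\Big(\sum_j |p_j(w_j)|^2\Big)^{p/2} dw, \quad \|\tau_{\sigma_\star} D\|_q^q \asymp \int_{\T^\N}\Big(\sum_j e^{-2\mu_{j,0}\sigma_\star}|p_{j,\sigma_\star}(w_j)|^2\Big)^{q/2} dw,
\]
with $p_{j,\sigma}(w) := p_j(e^{-\delta_j\sigma} w)$. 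Minkowski in $L^{q/2}$ (valid since $q/2\ge 1$), the subharmonic bound $\|p_{j,\sigma}\|_{L^q(\T)} \le \|p_j\|_{L^q(\T)}$, the classical Nikolskii inequality $\|p_j\|_{L^q(\T)}\le C(2^j+1)^{1/p-1/q}\|p_j\|_{L^p(\T)}$, and the cancellation $e^{-2\mu_{j,0}\sigma_\star}(2^j+1)^{2(1/p-1/q)} = O(1)$ combine to give $\|\tau_{\sigma_\star} D\|_q^2 \lesssim \sum_j \|p_j\|_{L^p(\T)}^2$. This is closed using that $L^p(\T)$ has cotype $2$ for $1\le p\le 2$: the cotype inequality with the Steinhaus system, together with Kahane's equivalence of $L^1(z)$- and $L^p(z)$-norms, yields $\sum_j \|p_j\|_{L^p(\T)}^2 \lesssim \|D\|_p^2$. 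The remaining range $p\le q\le 2$ follows by Stein's complex interpolation for the analytic family $T_z := \tau_{z(1/p-1/2)L(\mu)}$ on the strip $0\le\mathrm{Re}\, z\le 1$: on $\mathrm{Re}\, z=0$ the operator $T_{is}$ acts as an isometry of $\hh_p(\mu)$, on $\mathrm{Re}\, z=1$ the operator $T_{1+is}:\hh_p(\mu)\to\hh_2(\mu)$ is bounded by the $q=2$ case just proved, and interpolation at $z=\theta$ gives $T_\theta = \tau_{(\tfrac{1}{p}-\tfrac{1}{q(\theta)})L(\mu)}:\hh_p(\mu)\to\hh_{q(\theta)}(\mu)$ with $\tfrac{1}{q(\theta)} = \tfrac{1-\theta}{p} + \tfrac{\theta}{2}$ sweeping $[p,2]$.

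The main obstacle is the endpoint itself: the generic scheme behind Proposition~\ref{cormax} loses an $\varepsilon$ to enforce summability of a geometric series, and this slack is unavailable at $\sigma_\star$. The rigid structure of $\mu$ is used crucially in three ways: the $\Q$-independence of the outer frequencies $\mu_{j,0}$ turns the Dirichlet series into a Steinhaus series with independent polynomial-valued coefficients (making Kahane--Khintchine applicable), each block being an arithmetic progression is what produces the sharp $(2^j+1)^{1/p-1/q}$ Nikolskii factor that cancels against $e^{-\mu_{j,0}\sigma_\star}$, and cotype $2$ of $L^p$ replaces the missing $\varepsilon$-summability in the last step.
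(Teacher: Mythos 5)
Your proof is correct, and for the most part it runs parallel to the paper's: the identification of $\|\cdot\|_{\hh_r(\mu)}$ with the Bohr lift (the paper's \eqref{eqboh}, via \cite[Theorem~3.31]{DeSch}), the Khintchine/cotype-$2$ square-function bounds \eqref{khin}--\eqref{khin2}, and the blockwise Nikolskii estimate with the cancellation $e^{-2j\sigma_\star}(2^j+1)^{2(1/p-1/q)}=O(1)$ constitute exactly the paper's argument for $q\ge 2$; likewise your part (a) is essentially the paper's computation, only packaged through Lemma~\ref{theochar} and the sharpness of the Nikolskii constant for an arithmetic progression of length $2^j+1$, where the paper instead tests the block Dirichlet kernels directly and invokes Lemma~\ref{monotone2}. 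The genuine difference is the range $p\le q<2$: the paper reduces to $q=2$ by hand, via the dilation/Jensen step \eqref{eqpoi} followed by two applications of H\"older with exponents $\alpha=q\beta/2$, $\beta=(2-p)/(q-p)$, splicing the $q=2$ estimate against the cotype bound \eqref{khin2}; you instead run Stein's complex interpolation for the family $T_z=\tau_{z(1/p-1/2)L(\mu)}$ between the line $\mathrm{Re}\, z=0$ (vertical translations, which are isometries of every $\hh_r(\mu)$) and $\mathrm{Re}\, z=1$ (the proven $\hh_p\to\hh_2$ endpoint). Your route is more conceptual and avoids the exponent bookkeeping, but it requires the (standard, and worth stating) justification that Stein interpolation applies with fixed domain $\hh_p(\mu)$ and varying target: restrict to Dirichlet polynomials, where $z\mapsto T_zD$ is entire and uniformly bounded on the closed strip, dualize against simple functions on the Dirichlet group, apply the three-lines lemma, and observe that the interpolated image keeps its Fourier support, hence lies in $\hh_{q(\theta)}(\mu)$; the paper's H\"older argument is more pedestrian but stays entirely within the $L^r$-estimates already set up. Both yield the endpoint case $\sigma=(\tfrac1p-\tfrac1q)L(\mu)$ in full.
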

\begin{proof}
Notice that $L(\mu)=\log 2$ by Lemma~\ref{lella}, since $\#(\lambda\cap [j,j+1))=2^j+1$ for every $j\in\N$. 
 
We start with \ref{lemext2}. By Proposition \ref{cormax} we can assume that $\sigma=(\tfrac{1}{p}-\tfrac{1}{q})\log 2$ without loss of generality. From \cite[Theorem~3.31]{DeSch} and the structure of $\mu$, for a Dirichlet polynomial
\[
D=\sum_{j}\sum_{k=0}^{2^{j}} a_{j,k} e^{-\mu_{j,k} s} \,,
\]
and every $1\le r<\infty$ we have
\begin{align}
    \label{eqboh}
    \|D\|_r^r=\int_{\T^\infty}\int_{\T^\infty} \Big|\sum_{j}\sum_{k=0}^{2^{j}} a_{j,k} w_j z_j^k\Big|^r dw dz \,.
\end{align}
Indeed, using again the language of \cite{DeSch} we have that  $\mu = (R,b)$, where the  Bohr matrix $R$  is a `diagonal block matrix'  of  blocks  $(r_{kl})_{0 \leq k \le 2^j,l=1,2}$ with 
$r_{k1}= 1 $ and $r_{k2}=  k$, that is 
\[ 
R= 
\begin{bmatrix}
\boldsymbol{1} & \boldsymbol{0} & 0& 0& 0& 0& \cdots\\
\boldsymbol{1} & \boldsymbol{1} & 0& 0& 0& 0& \cdots\\
0 & 0 & \boldsymbol{1} & \boldsymbol{0}& 0&  0&\cdots\\
0 & 0 & \boldsymbol{1} & \boldsymbol{1}& 0&  0&\cdots\\
0 & 0 & \boldsymbol{1} & \boldsymbol{2}& 0&  0& \cdots\\
0 & 0 & 0 & 0& \boldsymbol{1} & \boldsymbol{0}&  \cdots\\
0 & 0 & 0 & 0& \boldsymbol{1} & \boldsymbol{1}&  \cdots\\
0 & 0 & 0 & 0& \boldsymbol{1} & \boldsymbol{2}&  \cdots\\
0 & 0 & 0 & 0& \boldsymbol{1} & \boldsymbol{3}&  \cdots\\
0 & 0 & 0 & 0& \boldsymbol{1} & \boldsymbol{4}&  \cdots\\ 
\vdots & \vdots & \vdots & \vdots& \vdots & \vdots& \ddots
\end{bmatrix}
\]
and  $b = (\mu_{1,0}, \delta_1,\mu_{2,0}, \delta_2, \mu_{3,0}, \delta_3, ...)$ the $\Q$-li. basis. By \cite[Theorem 3.31]{DeSch} we get \eqref{eqboh}.

 Applying Khinchin's and Minkowski's integral inequalities
  (or, directly  the well known facts that $H_r(\T^\infty)$ has cotype $2$ for $1 \leq r \leq 2$ and type $2$ for $2 \leq r < \infty$), for some constants $C_r>0$ depending on $r$ we have 
\begin{align}\label{khin}
		\|D\|_r \le C_r \Big(\int_{\T^\infty} \Big(\sum_{j}\Big|\sum_{k=0}^{2^{j}} a_{j,k} z_j^k\Big|^2\Big)^{r/2} dz\Big)^{1/r} \le  C_r \Big(\sum_j \Big\|\sum_{k=0}^{ 2^{j}} a_{j,k} z^k\Big\|_{r}^2\Big)^{1/2},
  \intertext{if $2\le r <\infty$, and,}
  \label{khin2}
\Big(\sum_j \Big\|\sum_{k=0}^{ 2^{j}} a_{j,k} z^k\Big\|_{r}^2\Big)^{1/2}
\le  \Big(\int_{\T^\infty} \Big(\sum_{j}\Big|\sum_{k=0}^{2^{j}} a_{j,k} z_j^k\Big|^2\Big)^{r/2} dz\Big)^{1/r} \leq C_r \|D\|_r \,,
	\end{align}
	if $1\le r \le 2$. This allows us to use Nikolskii's inequality 
 from  Lemma~\ref{N-type} which, together with Lemma~\ref{monotone2}, give for  $j\in\N$
	\begin{multline*}
		 \Big\|\sum_{k=0}^{ 2^{j}}  a_{j,k} e^{-\mu_{j,k} \sigma} e^{-\mu_{j,k} s}\Big\|_q 
		 \le  2^{(j+1)(1/p-1/q)}\Big\|\sum_{k=0}^{ 2^{j}}  a_{j,k} e^{-\mu_{j,k} \sigma} e^{-\mu_{j,k} s}\Big\|_p
		 \\  \le 2^{(j+1)(1/p-1/q)} e^{-j \sigma} \Big\|\sum_{k=0}^{ 2^{j}}  a_{j,k}  e^{-\mu_{j,k} s}\Big\|_p
		  \le  2    \Big\|\sum_{k=0}^{ 2^{j}}  a_{j,k}  e^{-\mu_{j,k} s}\Big\|_p.
	\end{multline*}
Combining this with \eqref{khin}, \eqref{khin2} and the density of Dirichlet polynomials in $\hh_r(\lambda)$ for $1\le r<\infty$ shows that $\tau_\sigma$ is bounded for $q\ge 2$. 

If $p<q<2$ we can reduce the problem to the case $q=2$ as follows. 
Using the well known fact 
that 
$\|g(r \boldsymbol{\cdot})\|_q \leq \|g\|_q$ for $\,0 < r \leq 1 $ and $g\in H_q(\T) = H_q(\mathbb{D})$,
for each coordinate $z_j$ and Jensen's inequality, we have
\begin{equation} \label{eqpoi}
\begin{split}
    \|\tau_\sigma D\|_q &= \Big( \int_{\T^\infty}\int_{\T^\infty} \Big|\sum_{j} \sum_{k=0}^{2^{j}} a_{j,k} e^{- (\mu_{j,0}+k\delta_j) \sigma} w_j z_j^k\Big|^q dw dz\Big)^{1/q}
    \\ &\le \Big( \int_{\T^\infty}\int_{\T^\infty} \Big|\sum_{j} \sum_{k=0}^{2^{j}} a_{j,k} e^{- \mu_{j,0}\sigma} w_j z_j^k\Big|^q dw dz\Big)^{1/q} 
    \\ & \leq\Big(\int_{\T^\infty} \Big(\sum_{j}e^{- 2j\sigma}\Big|\sum_{k=0}^{2^{j}} a_{j,k} z_j^k\Big|^2\Big)^{q/2} dz\Big)^{1/q} \,.
\end{split}
\end{equation}
For every $j\in\N$ define
\[
f_j(z)=\sum_{k=0}^{2^{j}} a_{j,k} z_j^k\,.
\]
Consider the parameters
\[
\alpha=\frac{q\beta}{2}, \quad \beta=\frac{2-p}{q-p}\,,
\]
and observe that they are both greater than 1.
Applying H\"older's inequality twice to \eqref{eqpoi} and using \eqref{khin2} we get 
\begin{align*}
    \|\tau_\sigma D\|_q 
    &\leq   \Big(\int_{\T^\infty} \Big(\sum_{j}e^{- 2j\sigma \alpha}|f_j|^2\Big)^{q/2\alpha}  \Big(\sum_{j}|f_j|^2\Big)^{q/2\alpha'}dz\Big)^{1/q}
    \\&\leq   \Big(\int_{\T^\infty} \Big(\sum_{j}e^{- 2j\sigma \alpha}|f_j|^2\Big)^{q\beta/2\alpha}  dz\Big)^{1/q \beta}
    \Big(\int_{\T^\infty} \Big(\sum_{j}|f_j|^2\Big)^{q\beta'/2\alpha'}dz\Big)^{1/q\beta'} \notag
    \\&=  \Big(\int_{\T^\infty} \sum_{j}e^{- 2j(\tfrac{1}{p}-\tfrac{1}{2})\log 2 }|f_j|^2  dz\Big)^{1/q \beta}
    \Big(\int_{\T^\infty} \Big(\sum_{j}|f_j|^2\Big)^{p/2}dz\Big)^{1/q\beta'} \notag
    \\&\le C  \Big(\int_{\T^\infty} \sum_{j}e^{- 2j(\tfrac{1}{p}-\tfrac{1}{2})\log 2 }|f_j|^2  dz\Big)^{1/q \beta}
    \|D\|_p^{p/q\beta'} \,, 
\end{align*}
where the constant $C$ depends on $p$ and $q$. The integral on the right-hand side corresponds to the case $q=2$ we already dealt with. So we arrive at (possibly changing $C$)
\[
    \|\tau_\sigma D\|_q   \leq C   \|D\|_p^{2/q \beta} \|D\|_p^{p/q\beta'} = C \|D\|_p \,.
\]

We prove now \ref{lemext1} and assume that  $\tau_\sigma$ is bounded. Notice that again by  Lemma~\ref{monotone2} (or by hand if $q=\infty$) for every $j\in \N$ we have
\[
	\Big\|\sum_{k=0}^{ 2^{j}}  e^{-\mu_{j,k} \sigma} e^{-\mu_{j,k} s}\Big\|_q \ge  e^{-(j+1) \sigma}\Big\|\sum_{k=0}^{ 2^{j}}   e^{-\mu_{j,k} s}\Big\|_q
=e^{-(j+1) \sigma}\Big\|\sum_{k=0}^{ 2^{j}}  z^k\Big\|_q
\ge c e^{-(j+1) \sigma} 2^{j/q'},
\]
and,
\[
\Big\|\sum_{k=0}^{ 2^{j}}  e^{-\mu_{j,k} s}\Big\|_p =\Big\|\sum_{k=0}^{ 2^{j}}  z^k\Big\|_p
\le \begin{cases}
    C  2^{j/p'} & \text{if } p>1
    \\ C j & \text{if } p=1 \,.
\end{cases}
\]
In particular, for every $j\in \N$ we get
\[
c e^{-(j+1) \sigma} 2^{j/q'}\le \|\tau_\sigma\|  C  2^{j/p'} j\,.
\]
So we must have
\[
\log 2/q'-\sigma\le \log 2 / p'\,.
\]
Rearranging the expression shows that  $\sigma\ge (\tfrac{1}{p}-\tfrac{1}{q})L(\mu)$.
\end{proof}
As we already mentioned in the introduction, an analogous proof gives an affirmative answer to \cite[Question 5.12]{Ba}.

\begin{proof}[Proof of Proposition~\ref{propshift}]
	Let $\nu$ be a $\Q$-li frequency such that $ \alpha L(\nu)=  L(\mu)$ and moreover $\langle\mu\rangle_\Q\cap \langle\nu\rangle_\Q=0$, and set $\lambda=\mu\cup\nu$. 

Clearly if $\tau_\sigma$ maps $\hh_p(\mu \cup \nu)$ into $\hh_q(\mu \cup \nu)$ then it does so for $\mu$. By Lemma~\ref{lemsup} we have that $L(\lambda)=L(\nu)$. Since
\begin{equation}\label{eqalp}
  (\tfrac{1}{p}-\tfrac{1}{q})L(\mu)=\alpha (\tfrac{1}{p}-\tfrac{1}{q}) L(\nu)=\alpha (\tfrac{1}{p}-\tfrac{1}{q}) L(\lambda)\,,
\end{equation}
applying Lemma~\ref{lemext} we immediately get \ref{propshifta}.

Regarding \ref{propshiftb}, it follows from \eqref{eqalp} and Lemma~\ref{lemext} that $\tau_\sigma$ maps $\hh_p(\mu)$ into $\hh_q(\mu)$. Again by Lemma~\ref{lemsup}, $\tau_\sigma$ also does so for $\mu \cup \nu$ (recall that  $\nu$ is hypercontractive and we are assuming $q<\infty$).
\end{proof}

\begin{proof}[Proof of Proposition~\ref{example}]
We take the same example as in the proof of Proposition~\ref{propshift} for  $\alpha=p_0/2$. In particular, $L(\lambda)=L(\nu)$ and for every $1\le p\le 2$ and $p\le q <\infty$,
\begin{equation}    \label{eqalso}
    T_{q,p}(\lambda)=\frac{p_0}{2}L(\lambda)\Big(\frac{1}{p}-\frac{1}{q}\Big).
\end{equation}
Also, by Lemma~\ref{lemsup} and Theorem~\ref{rem}, 
\[
 S_p(\lambda)=\max\{S_p(\nu),S_p(\mu)\}
 \le \max\Big\{\frac{L(\nu)}{2},\frac{L(\mu)}{p}\Big\}= \max\Big\{\frac{L(\lambda)}{2},\frac{p_0 L(\lambda)}{2p}\Big\}\,.
\]
In particular, we have that $S_{p_0}(\lambda)=L(\lambda)/2$ and so $p_0\ge p_0(\lambda)$ (see \eqref{defpo} for the definition). From this and \eqref{eqalso}, we get
\[
T_{q,p}(\lambda)\ge\frac{p_0(\lambda)}{2}L(\lambda)\Big(\frac{1}{p}-\frac{1}{q}\Big)\,.
\]
which in view of Proposition~\ref{propext} gives the result.
\end{proof}

%

\begin{thebibliography}{10}

\bibitem{Ba2}
F.~Bayart.
\newblock Hardy spaces of {D}irichlet series and their composition operators.
\newblock {\em Monatsh. Math.}, 136(3):203--236, 2002.

\bibitem{Ba}
F.~Bayart.
\newblock Convergence and almost sure properties in {H}ardy spaces of
  {D}irichlet series.
\newblock {\em Math. Ann.}, 382(3-4):1485--1515, 2022.

\bibitem{Bohr1914}
H.~Bohr.
\newblock Einige {B}emerkungen {\"u}ber das {K}onvergenzproblem
  \emph{{D}irichlet}scher {R}eihen.
\newblock {\em Rend. Circ. Mat. Palermo}, 37:1--16, 1914.

\bibitem{Bo}
J.~Bourgain.
\newblock Bounded orthogonal systems and the {$\Lambda(p)$}-set problem.
\newblock {\em Acta Math.}, 162(3-4):227--245, 1989.

\bibitem{CaDeMaSch21}
D.~Carando, A.~Defant, F.~Marceca, and I.~Schoolmann.
\newblock Vector-valued general {D}irichlet series.
\newblock {\em Studia Math.}, 258(3):269--316, 2021.

\bibitem{Ch}
M.~Chang.
\newblock Factorization in generalized arithmetic progressions and applications
  to the {E}rd{\H o}s-{S}zemer\'{e}di sum-product problems.
\newblock {\em Geom. Funct. Anal.}, 13(4):720--736, 2003.

\bibitem{DeFVScSP}
A.~Defant, T.~Fern\'{a}ndez~Vidal, I.~Schoolmann, and P.~Sevilla-Peris.
\newblock Fr\'{e}chet spaces of general {D}irichlet series.
\newblock {\em Rev. R. Acad. Cienc. Exactas F\'{\i}s. Nat. Ser. A Mat. RACSAM},
  115(3):Paper No. 138, 34, 2021.

\bibitem{ElLibro}
A.~Defant, D.~Garc\'{\i}a, M.~Maestre, and P.~Sevilla-Peris.
\newblock {\em Dirichlet {S}eries and {H}olomorphic {F}unctions in {H}igh
  {D}imensions}, volume~37 of {\em New Mathematical Monographs}.
\newblock Cambridge University Press, Cambridge, 2019.

\bibitem{DePe17}
A.~Defant and A.~P\'{e}rez.
\newblock Optimal comparison of the {$p$}-norms of {D}irichlet polynomials.
\newblock {\em Israel J. Math.}, 221(2):837--852, 2017.

\bibitem{DeSch}
A.~Defant and I.~Schoolmann.
\newblock {$\mathcal{H}_p$}-theory of general {D}irichlet series.
\newblock {\em J. Fourier Anal. Appl.}, 25(6):3220--3258, 2019.

\bibitem{DeVore-Lorentz-book}
R.~A. DeVore and G.~G. Lorentz.
\newblock {\em Constructive approximation}, volume 303 of {\em Grundlehren der
  mathematischen Wissenschaften [Fundamental Principles of Mathematical
  Sciences]}.
\newblock Springer-Verlag, Berlin, 1993.

\bibitem{DiTi}
Z.~Ditzian and S.~Tikhonov.
\newblock Ul\textquotesingle yanov and {N}ikol\textquotesingle skii-type
  inequalities.
\newblock {\em J. Approx. Theory}, 133(1):100--133, 2005.

\bibitem{Garling-book}
D.~J.~H. Garling.
\newblock {\em Inequalities: {A} journey into linear analysis}.
\newblock Cambridge University Press, Cambridge, 2007.

\bibitem{He}
H.~Helson.
\newblock Conjugate series and a theorem of {P}aley.
\newblock {\em Pacific J. Math.}, 8:437--446, 1958.

\bibitem{La}
J.~C. Lagarias.
\newblock Mathematical quasicrystals and the problem of diffraction.
\newblock In {\em Directions in mathematical quasicrystals}, volume~13 of {\em
  CRM Monogr. Ser.}, pages 61--93. Amer. Math. Soc., Providence, RI, 2000.

\bibitem{LeLe}
A.~Lewko and M.~Lewko.
\newblock On the structure of sets of large doubling.
\newblock {\em European J. Combin.}, 32(5):688--708, 2011.

\bibitem{NeWi}
R.~J. Nessel and G.~Wilmes.
\newblock Nikolskii-type inequalities for trigonometric polynomials and entire
  functions of exponential type.
\newblock {\em J. Austral. Math. Soc. Ser. A}, 25(1):7--18, 1978.

\bibitem{rudin1960trigonometric}
W.~Rudin.
\newblock Trigonometric series with gaps.
\newblock {\em J. Math. Mech.}, 9:203--227, 1960.

\bibitem{Ru}
W.~Rudin.
\newblock {\em Fourier analysis on groups}.
\newblock Wiley Classics Library. John Wiley \& Sons, Inc., New York, 1990.
\newblock Reprint of the 1962 original, A Wiley-Interscience Publication.

\bibitem{TaVu}
T.~Tao and V.~H. Vu.
\newblock {\em Additive combinatorics}, volume 105 of {\em Cambridge Studies in
  Advanced Mathematics}.
\newblock Cambridge University Press, Cambridge, 2010.
\newblock Paperback edition [of MR2289012].

\bibitem{Werner-book}
D.~Werner.
\newblock {\em Funktionalanalysis}.
\newblock Springer-Lehrb. Berlin: Springer Spektrum, 8th revised edition, 2018.

\end{thebibliography}

\end{document}